\DeclareMathAlphabet{\mathpzc}{OT1}{pzc}{m}{it}
\newtheorem{thm}{Theorem}[section]
\newtheorem{cor}[thm]{Corollary}
\newtheorem{lem}[thm]{Lemma}
\newtheorem{defn}[thm]{Definition}
\newtheorem{rem}[thm]{Remark}
\numberwithin{equation}{section}
\def\be{\begin{equation}}
\def\ee{\end{equation}}
\def\bse{\begin{subequations}}
\def\ese{\end{subequations}}
\renewcommand{\theequation}{\arabic{section}.\arabic{equation}}
\begin{document}
	
	\title{	Existence and Uniqueness for a Coupled Parabolic-Hyperbolic Model of MEMS}
	\author{
		Heiko Gimperlein\thanks{University of Innsbruck, Engineering Mathematics, Technikerstra\ss e 13, 6020 Innsbruck, Austria} \thanks{Department of Mathematical, Physical and Computer Sciences,
			University of Parma, 43124, Parma, Italy} \and  Runan He\thanks{Institut f\"{u}r Mathematik, Martin-Luther-Universit\"{a}t Halle-Wittenberg, 06120 Halle (Saale), Germany} \and  Andrew A.~Lacey\thanks{Maxwell Institute for Mathematical Sciences and Department of Mathematics, School of Mathematical and Computer Sciences, Heriot-Watt University, Edinburgh, EH14 4AS, United Kingdom\\Data sharing not applicable to this article as no datasets were generated or analysed during the current study.}}
	\date{}

	\maketitle
	\abstract{\noindent Local well-posedness for a nonlinear parabolic-hyperbolic coupled system modelling Micro-Electro-Mechanical System (MEMS) is studied. The particular device considered is a simple capacitor with two closely separated plates, one of which has motion modelled by a semi-linear hyperbolic equation. The gap between the plates contains a gas and the gas pressure is taken to obey a quasi-linear parabolic Reynolds' equation. 
Local-in-time existence of strict solutions of the system is  shown,  using well-known local-in-time existence results for the hyperbolic equation, 
then H\"{o}lder continuous dependence of its solution on that of the parabolic equation, and finally getting local-in-time existence for a combined abstract parabolic problem. Semigroup approaches are vital for the local--in-time existence results.	
	}\vskip 0.8cm
	
	\section{Introduction}
	In the present paper we study the short-time existence, uniqueness and smoothness of solutions for the coupled parabolic-hyperbolic problem:  
	\bse\label{2ndcp1}
	\be\label{2ndcp1-1-1}
	\frac{\partial \left(wu\right)}{\partial t} =\frac{\partial}{\partial x}\left(w^3u\frac{\partial u}{\partial x}\right),\quad x\in\Omega,\ t\geq 0;
	\ee
	\be\label{2ndcp1-1-2}
	\frac{\partial^2w}{\partial t^2}=\frac{\partial^2w}{\partial x^2}-\frac{\beta_F}{w^2}+\beta_p(u-1),\quad x\in\Omega,\ t\geq 0,
	\ee
	\be\label{2ndcp1-2}
	u(x,0)=u_0(x),\ w(x,0)=w_0(x),\ \frac{\partial w}{\partial t}(x,0)=v_0(x),\quad x\in {\Omega},
	\ee
	\be\label{2ndcp1-3}
	u(x,t)=\theta_1, \ w(x,t)=\theta_2,\quad x\in\partial\Omega,\quad t\geq 0,
	\ee
	\ese
	which models  gas pressure and membrane position in an idealised MEMS capacitor, accounting for tension, but not elasticity, in the moving part of the capacitor -- so that it is modelled as a membrane rather than a plate.
	
	Here the variables $u(x,t)$ and $w(x,t)$ represent gas pressure and gap width respectively, $\Omega\subset\mathds{R}$ is an open, bounded interval, $\beta_F$, $\beta_p$, $\theta_1$, $\theta_2>0$ are given constants; $u_0=u_0(x)$, $v_0=v_0(x)$ and $w_0=w_0(x)$ are given functions. We prove the following result giving well-posedness for short time:
	
	\begin{thm}\label{2ndcoupled system}
		Let $\sigma \in (0,\frac{1}{2})$, $u_0 \in H^{2+\sigma}(\Omega)$, $v_0 \in H_0^1(\Omega)$ and $w_0 \in H^2(\Omega)$, compatible with the boundary conditions and such that $u_0, w_0>0$. Then there exists a time interval $[0, T)$ such that the initial-boundary value problem  \eqref{2ndcp1} admits a unique strict solution $(u, w)$ on $[0, T)$ and
		\[u\in C^{\sigma+1}\left([0, T); L^2(\Omega)\right)\cap C^\sigma\left([0, T); H^2(\Omega)\right),\]
		\[w\in C^2\left([0, T); L^2(\Omega)\right)\cap C^1([0, T); H^1(\Omega))\cap C\left([0, T); H^2(\Omega)\right).\]
	\end{thm}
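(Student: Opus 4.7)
The plan is to decouple the system by viewing the hyperbolic equation \eqref{2ndcp1-1-2} as defining a solution map $u \mapsto w[u]$, substituting the result into the parabolic equation \eqref{2ndcp1-1-1}, and then applying abstract parabolic theory to close the loop by a fixed-point argument for $u$. I would work in a closed ball
\[
\mathcal{U}_T = \bigl\{ u \in C^\sigma([0,T]; H^2(\Omega)) \cap C^{1+\sigma}([0,T]; L^2(\Omega)) : u(0)=u_0,\ u \geq \delta \bigr\}
\]
for some $\delta \in (0, \min u_0)$ and small $T>0$, and similarly control $w$ from below. The first step is to solve the semilinear wave equation \eqref{2ndcp1-1-2} for fixed $u \in \mathcal{U}_T$: treating $-\partial_{xx}$ with Dirichlet data as the generator of a cosine family, and handling the singular forcing $-\beta_F/w^2$ by Picard iteration in a region where $w$ is bounded below, one obtains local-in-time existence of a unique $w[u]$ in the space stated in the theorem. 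This appears to be an instance of a local existence result for semilinear hyperbolic problems already proved earlier in the paper.

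The second step is H\"older continuous dependence of $w$ on $u$. Taking the difference of the wave equations for $w_1 = w[u_1]$ and $w_2 = w[u_2]$,
\[
\partial_{tt}(w_1 - w_2) - \partial_{xx}(w_1 - w_2) = -\beta_F\bigl(w_1^{-2} - w_2^{-2}\bigr) + \beta_p(u_1 - u_2),
\]
standard energy estimates together with Gronwall's inequality give a Lipschitz bound in the weakest norm, and interpolating against the uniform bound in the stronger norms available from Step 1 yields a $\sigma$-H\"older estimate
\[
\|w[u_1] - w[u_2]\|_{C([0,T]; H^2) \cap C^1([0,T]; H^1) \cap C^2([0,T]; L^2)} \leq C \|u_1 - u_2\|_{C([0,T]; L^2)}^\sigma.
\]

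The third step recasts \eqref{2ndcp1-1-1} as an abstract quasilinear parabolic problem. Expanding $\partial_t(wu) = w\,\partial_t u + u\,\partial_t w$ and $\partial_x(w^3 u\,\partial_x u) = w^3 u\,\partial_{xx} u + (3w^2 w_x u + w^3 u_x)\,u_x$ and dividing by $w>0$, one obtains
\[
\partial_t u + A(t) u = F(t,u), \qquad A(t) v = - w[u](t)^2\, u(t)\,\partial_{xx} v,
\]
with Dirichlet boundary conditions. Because $u, w$ are bounded below, $A(t)$ is a family of sectorial operators on $L^2(\Omega)$, uniformly elliptic, and thanks to Step 2 and the definition of $\mathcal{U}_T$ with $\sigma$-H\"older time dependence. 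I would then invoke maximal regularity for non-autonomous parabolic problems with H\"older coefficients (of Acquistapace--Terreni or Lunardi type) for the map $u \mapsto \tilde u$ solving $\partial_t \tilde u + A(t)\tilde u = F(t,u)$, showing that for $T$ sufficiently small it is self-mapping on $\mathcal{U}_T$ and contractive in a suitably weaker norm; Banach's fixed-point theorem then delivers the solution $(u, w[u])$ of the coupled system.

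The main obstacle will be the regularity balance at this last stage: the hyperbolic solver only yields H\"older-in-$u$ dependence of $w$ in the strong norms, and this must exactly match what the abstract parabolic theory for H\"older coefficients can tolerate. Verifying that the linearisation map is genuinely contractive in a norm weak enough to benefit from Step 2, yet strong enough to ensure that $\tilde u$ lands back in $\mathcal{U}_T$ and preserves the pointwise lower bound $\tilde u \geq \delta$, requires careful interpolation between $L^2$ and $H^{2+\sigma}$ together with smallness of $T$. Propagation of positivity for $u$ and $w$ on the short interval is then automatic from continuity in time and the assumed positivity of the initial data.
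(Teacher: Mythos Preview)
Your high-level plan coincides with the paper's: decouple by solving the hyperbolic problem for $w[u]$ with $u$ frozen, prove continuous dependence of this solution map, substitute into the Reynolds equation, and close by a fixed-point argument for $u$. The execution, however, differs in an important way. The paper does \emph{not} use non-autonomous parabolic theory with a time-dependent family $A(t)$. Instead it linearises once, at the initial data: it shows that the Fr\'echet derivative of the solution map satisfies $[w'(u_0)q](0)=[v'(u_0)q](0)=0$ (since $W$ at $t=0$ returns $(v_0,w_0)$ regardless of $u$), so the linearisation of the full nonlinearity at $t=0$ collapses to an \emph{autonomous} second-order elliptic operator $\mathcal{P}^*$ on $H^2\cap H^1_0$. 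One then writes the problem as $\tilde u'=\mathcal{P}^*\tilde u + \bigl(F(\tilde u)-\mathcal{P}^*\tilde u\bigr)$ and applies maximal H\"older regularity for the analytic semigroup $e^{t\mathcal{P}^*}$, with the contraction carried out directly in $C^\alpha([0,T];H^2)$. What replaces your Acquistapace--Terreni hypotheses is a careful estimate (the paper's Lemma~\ref{2ndRHSMax}) on the $C^\alpha$-in-time seminorm of $F'(\tilde u)q-\mathcal{P}^*q$, which in turn rests on Lipschitz and H\"older bounds for the Fr\'echet derivative $W'(u)$ of the hyperbolic solution operator. Your route via a $u$-dependent $A(t)$ is a legitimate alternative and would also work in principle; the paper's autonomous reformulation trades the machinery of non-autonomous maximal regularity for a more hands-on analysis of the perturbation $F-\mathcal{P}^*$.

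One concrete point in your Step~2 is not right as written. Interpolating a Lipschitz bound in the energy norm $C([0,T];H^1)$ against a uniform bound in $C([0,T];H^2)$ gives H\"older continuity of $u\mapsto w[u]$ only in \emph{intermediate} spaces $H^s$, $1<s<2$; at $s=2$ the exponent degenerates to zero and you recover nothing beyond boundedness. So the displayed estimate $\|w[u_1]-w[u_2]\|_{C([0,T];H^2)\cap\cdots}\le C\|u_1-u_2\|_{C([0,T];L^2)}^{\sigma}$ cannot be obtained this way. Fortunately your own scheme does not need it: for the Acquistapace--Terreni condition you only require $t\mapsto A(t)$ H\"older, which follows from H\"older-in-$t$ of $u$ (by assumption on $\mathcal{U}_T$) and of $w[u]$ in $H^1$ (energy estimate plus Gronwall, as the paper proves in Corollary~\ref{2Holdercontinuity}); and for the contraction in a weaker norm the Lipschitz bound $\|w[u_1]-w[u_2]\|_{C([0,T];H^1)}\le C\|u_1-u_2\|_{C([0,T];H^2)}$ already suffices, since in one dimension $H^1\hookrightarrow L^\infty$ controls the coefficient difference $(w_1^2u_1-w_2^2u_2)\partial_{xx}\tilde u_2$ in $L^2$. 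You should rewrite Step~2 accordingly and drop the $H^2$ claim.
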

	\begin{rem}
		(A) Note that global-in-time solutions are not necessarily expected, as quenching singularities with $\displaystyle\inf_{x\in\Omega} w(x,t)\to 0$ may develop as $t\to T$. Their precise description is  the subject of reference \cite{gimperlein2022quenching}.\\
		(B) The Sobolev regularity  of a solution $w$ is also limited because the inhomogeneous term in \eqref{2ndcp1-1-2}  does not vanish at boundary $\partial\Omega$ of space $\Omega$. In fact, 
the linear wave equation,
		\be\label{Max-Reg-LWE}
		\frac{\partial^2 w}{\partial t^2}-\frac{\partial^2 w}{\partial x^2}=1,\quad (x, t)\in (0, 1)\times(0, \infty),
		\ee
		with homogeneous initial conditions $w(x, 0)=\frac{\partial w}{\partial t}(x, 0)=0$ 
and homogeneous boundary condition $w(0, t)=w(1, t)=0$, $t\in[0, \infty)$, has a solution $w(t)\in H^{\frac{5}{2}-\epsilon}(0, 1)$ for every $\epsilon>0$ and $t\in[0,\infty)$, but $w(t)\notin H^{\frac{5}{2}}(0, 1)$. We thus do not expect higher integer-order Sobolev regularity for $w$ in Theorem~\ref{2ndcoupled system}.		
	\end{rem}
	
	MEMS, micro electro-mechanical systems, are small devices which combine mechanical and electrical parts and effects, with particular examples including microphones, temperature sensors, resonators, accelerometers, data-storage devices etc. (see, for example, \cite{GM}, \cite{korvink2010mems} and \cite{BP}).  The particularly simple device considered here is an electrostatically actuated MEMS capacitor (see Fig.~\ref{figq5nl}), having two parallel plates maintained at different, but constant, potentials. One plate is fixed and flat, while the other, although flat at equilibrium and in the absence of an applied potential, is free to move, but held fixed at its edges. It is also maintained at a sufficiently high constant tension for it to be regarded as a membrane.

	The two plates are close, and separated by a narrow gap, of varying width
	$w$, filled by a rarefied gas, with local pressure $u$. The gas is taken to move according to Reynolds' equation, which is valid for a thin layer of viscous fluid, and behave as an isothermal ideal gas, thus giving PDE (\ref{2ndcp1-1-1}), under appropriate scaling. Taking the gas to move freely between the gap and the rest of the MEMS device gives the first boundary condition (\ref{2ndcp1-3}). With the flexible plate subject to sufficiently high tension, it behaves as a membrane and we have the first two terms of (\ref{2ndcp1-1-2}), again on scaling appropriately. Having the membrane subject to pressure $u$ from the gap and a constant pressure from the other side gives the last terms on the right-hand side of (\ref{2ndcp1-1-2}), after scaling $u$ with that constant pressure. The remaining term of the second PDE comes from  electrostatic attraction: because the gap is narrow, the electric field is (approximately) $\propto 1/w$, giving a charge density on the membrane $\propto 1/w$,  and hence an attractive force $\propto 1/w^2$ (also proportional to the square of the applied voltage difference between the membrane and the rigid plate). Holding the membrane at its edges gives the second boundary condition of (\ref{2ndcp1-3}). 
	
	\begin{figure}[H]
		\begin{center}
			\scalebox{0.6}{{\input{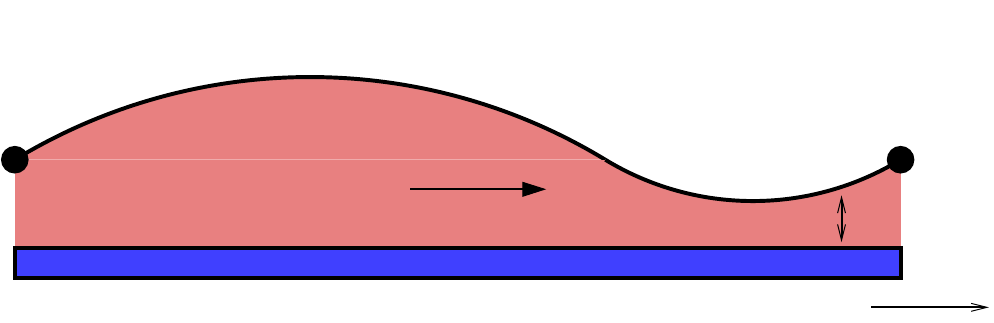_t}}}
		\end{center}
		\caption{Schematic of a simple electrostatically actuated MEMS capacitor.}\label{figq5nl}
	\end{figure}
	
	Here we consider the case of one spatial dimension, corresponding to an idealised prismatic device, translation-invariant and of infinite extent in one direction. In this case the energy space $H^1(\Omega)$ for the wave equation is an algebra, simplifying the analysis of the nonlinearity. 

In a companion paper \cite{parabolicdispersive} we consider the two-dimensional case when the tension in the flexible electrode is not assumed to be so large. Elastic effects are then taken into account and a bi-harmonic term is additionally included in the PDE  \eqref{2ndcp1-1-2} for gap width $w$.

	In  another recent paper \cite{ellipticdispersive}, we study the wellposedness of a different MEMS model, which is simpler regarding its fluid flow:  the equation \eqref{2ndcp1-1-1} is replaced by a linear elliptic PDE for the gas pressure $u$ but, like \cite{parabolicdispersive}, includes a bi-harmonic term in the equation \eqref{2ndcp1-1-2} for gap width $w$.			Physically, such a simpler fluid model represents the operation of  MEMS devices having effectively incompressible fluid flow and an elastic plate.   Here  the dynamics of the gas pressure is replaced  by a quasi-static approximation -- applicable when the gas flow can be regarded as slow, so that, to leading order,  the pressure and density can be regarded as constant. 
The resulting model can be reduced to a perturbed semilinear dispersive equation for $w$ via standard analysis of linear elliptic equations.

In many  industrial applications, the movement of the upper plate is dominated by its tension, so that elastic effects are negligible. This approximation in the limiting case leads to a wave equation  \eqref{2ndcp1-1-2} for the gap width $w$. 
	
	The mathematical analysis of  the coupled system \eqref{2ndcp1-1-1}-\eqref{2ndcp1-3} is studied by a delicate combination of the analytical techniques available for the constituent equations. 
We here adapt the techniques from the companion paper \cite{parabolicdispersive}: refining the analysis of \eqref{2ndcp1-1-2}, reducing the coupled system to an abstract quasilinear, degenerately parabolic equation for the gas pressure $u$ and showing the wellposedness by analytic semigroup techniques. 
	
	In contrast to the model in  the companion paper \cite{parabolicdispersive}, the short-time existence of the coupled system \eqref{2ndcp1-1-1}-\eqref{2ndcp1-3} leads to the quenching singularity of the semilinear wave equation in \cite{gimperlein2022quenching}, which motivates us to study the local behaviour of the quenching solution of the wave equation in \cite{gimperlein2022quenching} formally.
	
	\
	
	MEMS-related models have been studied for a number of years. Most attention has been directed to models given by a single equation but there has been some work on systems. We now review some literature which studies such models,  numerically and/or analytically, to obtain qualitative behaviour.
	
	A key steady-state model, describing the shape of the deflected elastic membrane of a canonical MEMS system, including the steady state of the model looked at in the present paper, can be written  in the dimensionless form
	\begin{equation}\label{DeflMem}
		\Delta w=\frac{\beta_F}{w^2}, \quad w\mid_{\partial\Omega}=1.
	\end{equation}
	Here, $w$ is again  the gap between the  membrane and the rigid plate. This model has been extensively studied. For example, the paper \cite{GP} has shown that there is a $0<\beta_F^*<\frac{4\mu_0}{27}$, where $\mu_0$ is the principle eigenvalue for the associated eigenvalue problem
	\[\Delta\phi+\mu\phi=0,\quad \phi\mid_{\partial\Omega}=0,\]
	such that \eqref{DeflMem} has no solutions provided  $\beta_F>\beta_F^*$, while if   $\beta_F<\beta_F^*$ \eqref{DeflMem} has a solution.  Flores et al.~\cite{FMPS} prove that (\ref{DeflMem}) has a maximal solution for $\beta_F<\beta_F^*$ in dimensions $N \le 2$. 
	
	A generalisation of this problem has a membrane with a spatially varying permittivity profile, with the stationary equation for the membrane then given by
	\begin{equation}\label{DeflMem2}
		\Delta w=\frac{\beta_Ff(x)}{w^2},\quad w\mid_{\partial\Omega}=1.
	\end{equation}
	Ghoussoub and Guo \cite{GhGuo} show  how the change of the permittivity profile $f(x)$ affects the critical value $\beta_F^*$. Analytical and numerical techniques give upper and lower bounds on $\beta_F^*$ which depend on the permittivity profile, the nature of domain including shape, size and dimension $N$ of the domain. Taking $\beta_F=\beta_F^*$, a corresponding extremal solution of \eqref{DeflMem2} exists if $1\leq N\leq 7$, but does not exist if $N\geq8$.
	
	The dynamical problem for an elastic membrane has been of much recent interest and an elastic membrane in a MEMS capacitor has  been modelled by a semilinear  wave equation
	\begin{equation}\label{intro-3}
		\epsilon^2\frac{\partial^2w}{\partial t^2}+\frac{\partial w}{\partial t}-\Delta w=-\frac{\beta_F}{w^2},\quad w\mid_{\partial\Omega}=1,
	\end{equation}
	with the first-derivative ${\partial w}/{\partial t}$ representing damping. 
	
	Flores \cite{FG} obtains the ``pull-in'' voltage separating the  regime for which the membrane can approach a steady state, from the “touchdown” regime for which the membrane always collapses onto the rigid plate. More specifically, it is shown that  touchdown -- gap width $w$  falls to zero somewhere -- must take place in finite time if $\beta_F>\beta_F^{*}$. Also, for small enough $\beta$, depending upon the initial data (voltage is less than the ``dynamic pull-in voltage''), touchdown, i.e.~quenching, does not occur.  A related non-local model is studied in \cite{guo2014hyperbolic}.
	
	When the contribution of the inertial terms dominate, i.e.~$\epsilon^2\gg1$, from \eqref{intro-3}, we may neglect the damping term, and rescale, to obtain the simpler hyperbolic model
	\begin{equation}\label{intro-5}
		\frac{\partial^2w}{\partial t^2}-\Delta w=-\frac{\beta_F}{w^2},\quad w\mid_{\partial\Omega}=1,
	\end{equation}
	which has been extensively studied in \cite{ChL, FG, KLNT, TNLK, LZ}.
	In particular, Kavallaris et al.~\cite{TNLK} obtain local and global existence results for this  MEMS model. They also study the dynamical behaviour of solutions and  dependence on the parameter $\beta_F$. 
	The work  of Chang and Levine \cite{ChL} shows  that, for suitable initial data,  the similar second-order semilinear PDE
	\[\frac{\partial^2w}{\partial t^2}-\Delta w=-\frac{\beta_F}{w},\quad w\mid_{\partial\Omega}=1,\]
	has a weak solution $w$ in time interval $[0, T]$ for some $T>0$, and that $w$ can be extended to a time interval of the form $[0, T+\tau]$ for small $\tau$, by using an iterative scheme. Other hyperbolic MEMS equations can be seen in reference \cite{MT} and more  general semilinear wave equations are looked at in Sogge's book \cite{SC}. 
	
	When  the damping term dominates, i.e.~$\epsilon^2\ll1$, then the semilinear damped wave equation reduces to the parabolic equation studied in \cite{FMPS}:
	\begin{equation}\label{intro-4}
		\frac{\partial w}{\partial t}-\Delta w=-\frac{\beta_F}{w^2},\quad w\mid_{\partial\Omega}=1.
	\end{equation}
	In the case of two space dimensions,  Flores et al.~\cite{FMPS} prove that, for $\beta_F<\beta_F^{*}$,  the solutions $w$ of \eqref{intro-4}, with initial value $w|_{t=0}=1$, converge to the maximal steady solution $\bar{w}$ of (\ref{DeflMem}) as $t\to\infty$, while for $\beta_F>\beta_F^{*}$, touchdown occurs.   Related non-local problems can be also found in Kavallaris et al. \cite{KLN}. The work \cite{GGuo2} of Guo et al. took spatially varying permittivity so that (\ref{intro-4})  is replaced by
	\begin{equation}\label{ParabMem}
		\frac{\partial w}{\partial t}-\Delta w=-\frac{\beta_Ff(x)}{w^2},\quad w\mid_{\partial\Omega}=1,\quad w\mid_{t=0}=1,
	\end{equation}
	with $f(x)$ denoting the permittivity proflle; similar conclusions to Flores et al. \cite{FMPS} are obtained.
	
	The touchdown of a highly damped membrane corresponds to the quenching of a solution to the parabolic equation, and the quenching profile for a semilinear parabolic equation has been studied extensively in J.~S.~Guo \cite{GuoJ}. Other quenching solutions for parabolic equations can be found in  papers  \cite{boni2000quenching}, \cite{ghoussoub2008estimates}, and references \cite{drosinou2020impacts},  \cite{guo2012nonlocal}, \cite{kavallaris2008touchdown} of Kavallaris et al. To be specific, the paper \cite{kavallaris2008touchdown} studies a similar case to \eqref{ParabMem}, in which the right-hand side $-\beta_Ff(x)/w^2$ of \eqref{ParabMem} is replaced by $-\beta_F|x|^\beta/w^p$ with specific $\beta$ and $p$, the resulting equation has a global-in-time solution for initial data close to $1$, while quenching occurs for large $\beta_F$ or small initial values. Similar conclusions also hold for a non-local version of \eqref{ParabMem}. Here a non-local term $f(t)=1/\left(1+\alpha\int_{\Omega}(1/w)dx\right)^2$
	replaces the $f(x)$ above, with $\alpha$ a positive constant, and models the effect of connecting the MEMS device to a fixed capacitor as well as a constant voltage source. More details can be found in Guo and Kavallaris \cite{guo2012nonlocal}, and Guo et al.~\cite{GHW}. The recent publication \cite{drosinou2020impacts} of  Kavallaris et al.~generalises the quenching solution to stochastic parabolic equation modelling MEMS devices with random fluctuations of the potential difference.

	Recent publications on coupled systems modelling gap flow and membrane motion are less extensive, only studying the compressible version \eqref{2ndcp1-1-2} of the standard nonlinear Reynolds' equation numerically, for example in the works \cite{BY, Bl, Jr, St, SRBUCP}. In particular, Bao et al.~\cite{BYSF} study squeeze-film damping with small amplitude deflections and linearise the nonlinear Reynolds' equation \eqref{2ndcp1-1-2} around the equilibrium position. The resulting equation is a heat equation, so that analytical solutions are then available.	


Our proof of Theorem \ref{2ndcoupled system} is outlined in Subsection \ref{outlinesection}. It relies on techniques for quasilinear parabolic equations developed, for example, by Amann, Lunardi and Sinestrari \cite{AH, LS1, SE}. Semigroup methods of this kind have become a powerful tool for MEMS-related models defined by a single equation or by an elliptic-parabolic coupled system, see \cite{CL}, \cite{ELW} and the recent survey \cite{LW}. We here combine such parabolic techniques for the quasilinear Reynolds' equation \eqref{2ndcp1-1-1} with semigroup techniques for the semilinear hyperbolic equation \eqref{2ndcp1-1-2}.

The plan of the paper is as follows: In Section \ref{2ndSec:prerequisite}, we introduce the relevant Sobolev spaces and some of their basic properties, we also introduce the mild solution and strict solution for the general evolution equation and their existence results, then we discuss the steps in the proof of Theorem \ref{2ndcoupled system}. In Section \ref{2nd-order problem}, we use a solving strategy for the system \eqref{2ndcp1-1-1}, \eqref{2ndcp1-1-2} based on decoupling the equations for the gap width $w$ and the pressure $u$. We first consider the semilinear hyperbolic equation \eqref{2ndcp1-1-2} for the gap width $w$ with an arbitrarily given pressure $u$ and use semigroup techniques for \eqref{2ndcp1-1-2} to show the local well-posedness of \eqref{2ndcp1-1-2}. While the regularity theory of semilinear hyperbolic equations has been of much recent interest, we here require detailed properties of the solution operator $u\mapsto w(u)$ in order to analyse the nonlinear Reynolds' equation \eqref{2ndcp1-1-1} with abstract coefficients involving $w(u)$. For example, we prove appropriate H\"{o}lder continuity of the solution operator $u\mapsto w(u)$ in Section \ref{SolnOp}. In Section \ref{2ndsection4}, we investigate the local well-posedness of \eqref{2ndcp1-1-1} for $u$ with abstract coefficients involving $w(u)$ by using techniques for quasilinear parabolic equations.

\subsection*{Notation}
Recall that $\Omega\subset\mathds{R}$ is an open and bounded subset. Denote by $C=C(\Omega)$ a positive constant that may vary from line to line below but depends only on $\Omega$.
\begin{defn}
	Denote by $X$ a Banach space equipped with norm $\|\cdot\|_{X}$, and set $k\in\mathbb{N}$ and $T\in(0, \infty)$.  $\mathcal{B}(X)$ is a space of bounded linear operators on $X$. In the following, we shall be particularly interested in $X=L^2(\Omega)$, $L^\infty(\Omega)$,  $H^k(\Omega)$, etc. 	
     $\mathcal{B}([0,T];X)$ denotes a space which consists of all  measurable, almost everywhere bounded functions $u: [0,T]\to  X$, $t\mapsto u(t)$ with norm $\|u\|_{\mathcal{B}([0,T];X)}= \sup_{t\in[0, T]}\|u(t)\|_{X}$. If $X$ is a function space as above, we write $u(t): \Omega\to\mathds{R}$ with $x\mapsto[u(t)](x)=u(x,t)$. The closed subspace of continuous functions is denoted by $C([0,T];X)$, and
	\[C^k([0,T];X)=\left\{u:[0,T]\rightarrow X:  \tfrac{d^ju}{dt^j}\in C([0,T];X), j\in[0, k]\right\},  \|u\|_{C^k([0,T];X)}=\hspace*{-0.2cm} \sup_{t\in[0, T]}\sum_{j=0}^{k}\left\|\tfrac{d^ju(t)}{dt^j}\right\|_{X}.\]
	The definition extends to non-integer order $k+\alpha$, $\alpha\in(0, 1)$, by setting 
	\[C^\alpha([0,T];X)=\left\{u:[0,T]\rightarrow X: [u]_{C^\alpha([0, T];X)}= \sup_{0\leq t<t+h\leq T}\tfrac{\|u(t+h)-u(t)\|_X}{|h|^\alpha}<\infty\right\},\]
	\[\|u\|_{C^{\alpha}([0,T];X)}=\|u\|_{C([0,T];X)}+[u]_{C^{\alpha}([0, T];X)} , \]
	\[C^{\alpha+k}([0,T];X)=\left\{u\in C^k([0,T];X): \quad \tfrac{d^ku}{dt^k}\in C^{\alpha}([0,T];X)\right\},\]
	\[\|u\|_{C^{\alpha+k}([0,T];X)}=\|u\|_{C^k([0,T];X)}+\left[\tfrac{d^ku}{dt^k}\right]_{C^{\alpha}([0, T];X)} . \]
	 Note that  $C\left([0, T]; B_{L^2}\left(V, r\right)\right)=\Big\{v\in C\left([0, T]; L^2\left(\Omega\right)\right): \  \sup_{t\in[0, T]}\left\|v(t)-V\right\|_{L^2(\Omega)}\leq r\Big\}$ and  $C^\alpha\left([0, T]; B_{L^2}\left(V, r\right)\right)=\Big\{v\in C^\alpha\left([0, T]; L^2\left(\Omega\right)\right): \  \sup_{t\in[0, T]}\left\|v(t)-V\right\|_{L^2(\Omega)}\leq r\Big\}$ with  $V\in L^2(\Omega)$, analogously, $C\left([0, T]; B_{X}\left(U, r\right)\right)=\Big\{u\in C\left([0, T]; X\right):\ u|_{\partial\Omega}=U|_{\partial\Omega},\  \sup_{t\in[0, T]}\left\|u(t)-U\right\|_{X}\leq r\Big\}$  and  $C^\alpha\left([0, T]; B_{X}\left(U, r\right)\right)=\Big\{u\in C^\alpha\left([0, T]; X\right):\quad u|_{\partial\Omega}=U|_{\partial\Omega},\quad  \sup_{t\in[0, T]}\left\|u(t)-U\right\|_{X}\leq r\Big\}$ with $U\in X$, where $X=H^1(\Omega)$, $H^2(\Omega)$. 
	
	Let $\mathcal{P}: D(\mathcal{P}) \subset X \to X$ be an unbounded linear operator which generates an analytic semigroup $e^{\mathcal{P}t}$, and define intermediate space $D_{\mathcal{P}}(\alpha, \infty)$  as follows:
	\[D_{\mathcal{P}}(\alpha, \infty)=\left\{v\in X:\ \|v\|_{\alpha}=\sup_{t>0}\|t^{1-\alpha}\mathcal{P}e^{\mathcal{P}t}v\|_{X}<\infty\right\}.\]
	It is a Banach space with respect to the norm
	$\|v\|_{D_{\mathcal{P}}(\alpha, \infty)}=\|v\|_{X}+\|v\|_{\alpha}$.
	Its closed subspace
	$D_{\mathcal{P}}(\alpha)=\left\{v\in X:\  \lim_{t\rightarrow 0}t^{1-\alpha}\mathcal{P}e^{\mathcal{P}t}v=0\right\}$
	inherits the norm of $D_{\mathcal{P}}(\alpha, \infty)$. 
\end{defn}
Our main results on the well-posedness of the semilinear wave equation  will be shown by constructing a Picard iteration in the complete metric space  $Z(T)$, given by \eqref{2ini-NBD}, with  $\tilde{v}_0=v_0,\ \tilde{w}_0=w_0-\theta_2$,
\begin{align}
		Z(T):=\bigg\{\hspace*{-0.1cm}\begin{pmatrix}
			\tilde{v}\\ \tilde{w}
		\end{pmatrix}\hspace*{-0.1cm}\in C\left([0,T]; L^2(\Omega)\times H_0^1(\Omega)\right)\hspace*{-0.1cm}: \begin{pmatrix}\tilde{v}(0)\\ \tilde{w}(0)\end{pmatrix}=\begin{pmatrix}\tilde{v}_0\\ \tilde{w}_0\end{pmatrix}\hspace*{-0.1cm},\
		\sup_{t\in[0, T]}\left\|\begin{pmatrix}\tilde{v}(t)-\tilde{v}_0\\ \tilde{w}(t)-\tilde{w}_0\end{pmatrix}\right\|_{L^2\times H^{1}(\Omega)}\hspace*{-0.1cm}\leq r\bigg\} . \label{2ini-NBD}
\end{align}

\section{Preliminaries and Outline}\label{2ndSec:prerequisite}

The approach in this article is functional analytic. In this section we recall standard notions and results for abstract evolution equations and discuss the steps in the proof of Theorem \ref{2ndcoupled system}.

\subsection{Abstract Evolution Equations}

\begin{defn}
	Let $\mathscr{X} $ be a Banach space, $\mathcal{A}: D(\mathcal{A}) \subset \mathscr{X} \to \mathscr{X}$ a linear, unbounded operator which  generates a strongly continuous semigroup ($C_0$-semigroup) $\{T(t): t\geq 0\}$. Further, let $T\in(0, \infty)$, $\mathcal{G}\in C([0, T]; \mathscr{X})$ and $\Phi_0\in \mathscr{X}$. A function $\Phi$ is called a mild solution of the  inhomogeneous evolution equation
	\be\label{IEE}
	\Phi'(t)=\mathcal{A}\Phi(t)+\mathcal{G}(t), \quad t\in [0, T],\quad \Phi(0)=\Phi_0,
	\ee
	if $\Phi\in C([0, T]; \mathscr{X})$ is given by the integral formulation
	\be\label{linear solu}
	\Phi(t)=T(t)\Phi_0+\int_0^tT(t-s)\mathcal{G}(s)ds,\quad t\in[0, T].
	\ee
	A function $\Phi$ is said to be a strict solution of \eqref{IEE}, if $\Phi\in C([0, T]; D(\mathcal{A}))\cap C^1([0, T]; \mathscr{X})$ is given by the integral formulation \eqref{linear solu} and satisfies \eqref{IEE}.
\end{defn}

\begin{lem}\label{IEE-S}
	Consider a linear operator $\mathcal{A}$ on a Banach space $\mathscr{X}$ which generates a $C_0$-semigroup $\{T(t): t\geq 0\}$, $T\in(0, \infty)$. Further, let $\Phi_0\in D(\mathcal{A})$ and  $\mathcal{G}\in C([0, T]; \mathscr{X})$. If $\Phi$ is a solution of the inhomogeneous evolution equation \eqref{IEE} on $[0, T]$, then $\Phi$ is given by the integral formulation \eqref{linear solu}.
	
	Assume that, in addition, either $\mathcal{G}\in C([0, T]; D(\mathcal{A}))$ or $\mathcal{G}\in C^1([0, T]; \mathscr{X})$. Then the mild solution $\Phi$ on $[0, T]$ defined by \eqref{linear solu} uniquely solves the inhomogeneous evolution equation \eqref{IEE}, and
	\[\Phi\in C\left([0, T]; D(\mathcal{A})\right)\cap C^1\left([0, T]; \mathscr{X}\right).\]
\end{lem}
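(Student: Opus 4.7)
The plan is to carry out the standard two-part argument from linear semigroup theory (as in Pazy), adapted to the notation of the lemma. The backbone is the observation that, for a candidate solution $\Phi$, the auxiliary map $s \mapsto T(t-s)\Phi(s)$ has a computable derivative on $(0,t)$ thanks to the generator identity $\tfrac{d}{ds}T(t-s)y = -\mathcal{A}T(t-s)y$ for $y\in D(\mathcal{A})$.

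For the first assertion (uniqueness/representation), I would fix $t\in(0,T]$ and, given a strict solution $\Phi$, define $g(s) := T(t-s)\Phi(s)$ for $s\in[0,t]$. Since $\Phi(s)\in D(\mathcal{A})$ and $\Phi$ is $C^1$ into $\mathscr{X}$, one computes
\[
g'(s) = -\mathcal{A}T(t-s)\Phi(s) + T(t-s)\Phi'(s) = T(t-s)\bigl[\Phi'(s)-\mathcal{A}\Phi(s)\bigr] = T(t-s)\mathcal{G}(s),
\]
where the commutation $\mathcal{A}T(t-s)=T(t-s)\mathcal{A}$ on $D(\mathcal{A})$ is used. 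Continuity of $\mathcal{G}$ makes the right-hand side integrable on $[0,t]$, and integrating $g'$ from $0$ to $t$ yields exactly \eqref{linear solu}. This also handles uniqueness of strict solutions under either extra hypothesis.

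For the existence half, write $\Phi(t) = T(t)\Phi_0 + v(t)$ with $v(t) := \int_0^t T(t-s)\mathcal{G}(s)\,ds$. The term $T(t)\Phi_0$ is a classical solution of the homogeneous problem because $\Phi_0\in D(\mathcal{A})$, so everything reduces to showing $v\in C([0,T];D(\mathcal{A}))\cap C^1([0,T];\mathscr{X})$ with $v' = \mathcal{A}v + \mathcal{G}$ and $v(0)=0$. I would split into the two cases. If $\mathcal{G}\in C([0,T];D(\mathcal{A}))$, then $T(t-s)\mathcal{G}(s)\in D(\mathcal{A})$ and closedness of $\mathcal{A}$ together with continuity of $s\mapsto T(t-s)\mathcal{A}\mathcal{G}(s)$ give $v(t)\in D(\mathcal{A})$ with $\mathcal{A}v(t)=\int_0^t T(t-s)\mathcal{A}\mathcal{G}(s)\,ds$, which is continuous in $t$; differentiating via the semigroup identity then yields the equation. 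If instead $\mathcal{G}\in C^1([0,T];\mathscr{X})$, I would substitute $\tau = t-s$ to write $v(t)=\int_0^t T(\tau)\mathcal{G}(t-\tau)\,d\tau$ and differentiate under the integral to get $v'(t) = T(t)\mathcal{G}(0) + \int_0^t T(\tau)\mathcal{G}'(t-\tau)\,d\tau$; a short computation using the fundamental identity $\tfrac{1}{h}(T(h)-I)\int_0^t T(\tau)\mathcal{G}(t-\tau)d\tau \to \mathcal{A}v(t)$ (valid because the limit on the right exists from the formula for $v'$) shows $v(t)\in D(\mathcal{A})$ and $v'(t)=\mathcal{A}v(t)+\mathcal{G}(t)$.

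The main obstacle is the second case: establishing $v(t)\in D(\mathcal{A})$ without a priori regularity of $\mathcal{G}$ in the $D(\mathcal{A})$-direction. One cannot simply pull $\mathcal{A}$ inside the integral, and the argument relies on closedness of $\mathcal{A}$ applied to the difference quotient $h^{-1}(T(h)-I)v(t)$, combined with the explicit differentiable representation of $v$ obtained by the change of variables. Once $v$ is identified as a strict solution in either case, continuity of $t\mapsto\mathcal{A}\Phi(t)=\Phi'(t)-\mathcal{G}(t)$ and of $\Phi'$ finishes the claimed regularity $\Phi\in C([0,T];D(\mathcal{A}))\cap C^1([0,T];\mathscr{X})$.
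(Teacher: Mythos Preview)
Your argument is the standard two-part proof from linear semigroup theory and is correct; the paper itself does not give a proof but simply refers to Theorem~6.9 in \cite{schnaubelt}, which contains essentially the same argument you have sketched. In other words, you have filled in what the paper defers to a reference, and there is no discrepancy in approach.
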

We refer to Theorem 6.9 in \cite{schnaubelt} for the proof of Lemma \ref{IEE-S}.
\begin{lem}\label{time-derivative-continuity}
	Let $\mathscr{X}$ be a Banach space and $\Phi\in C([0, T]; \mathscr{X})$ be differentiable from the right with right derivative $\Psi\in C\left([0, T]; \mathscr{X}\right)$. Then $\Phi\in C^1\left([0, T]; \mathscr{X}\right)$ and $\Phi'=\Psi$.
\end{lem}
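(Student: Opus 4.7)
The plan is to reduce the statement to the Banach-valued fundamental theorem of calculus together with the classical observation that a continuous function with identically vanishing right derivative on an interval must be constant.

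First I would introduce the auxiliary function
\[
g(t) := \Phi(t) - \Phi(0) - \int_0^t \Psi(s)\,ds, \qquad t \in [0,T],
\]
where the Bochner integral is well-defined and continuous in $t$ since $\Psi \in C([0,T];\mathscr{X})$, and its (two-sided) derivative equals $\Psi$. Consequently $g$ is continuous on $[0,T]$, satisfies $g(0) = 0$, and by hypothesis its right derivative vanishes identically on $[0,T)$.

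Next I would show that $g \equiv 0$. For any $\ell \in \mathscr{X}^*$ the scalar function $\ell\circ g : [0,T] \to \mathbb{R}$ is continuous with identically vanishing right derivative. The standard real-variable mean value inequality for right derivatives then forces $\ell \circ g$ to be constant, and since $\ell(g(0)) = 0$ it vanishes identically. As this holds for every $\ell \in \mathscr{X}^*$, Hahn--Banach yields $g(t) = 0$ on $[0,T]$, so
\[
\Phi(t) = \Phi(0) + \int_0^t \Psi(s)\,ds, \qquad t \in [0,T].
\]
Since the Bochner integral of a continuous function is continuously differentiable with derivative equal to the integrand, it follows that $\Phi \in C^1([0,T];\mathscr{X})$ with $\Phi' = \Psi$, as claimed.

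The only delicate point is the scalar mean value inequality used above: for a continuous $f : [0,T] \to \mathbb{R}$ with right derivative $0$ on $[0,T)$, one fixes $\epsilon > 0$ and shows that the closed set $\{ t \in [0,T] : |f(t)-f(0)| \leq \epsilon t\}$ coincides with $[0,T]$ by a continuity-and-connectedness argument that exploits the right differentiability to push membership slightly beyond any candidate endpoint; sending $\epsilon \to 0$ yields $f \equiv f(0)$. This is the only place where one-sided (rather than full) differentiability is genuinely used, and it is the conceptual heart of the lemma.
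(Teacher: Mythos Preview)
Your proof is correct and self-contained. The paper itself does not give a proof of this lemma; it simply refers the reader to Lemma~8.9 in \cite{schnaubelt}. Your argument via the auxiliary function $g(t)=\Phi(t)-\Phi(0)-\int_0^t\Psi(s)\,ds$, reduction to the scalar case through Hahn--Banach, and the one-sided mean value inequality is a standard and complete treatment, so in this instance you have supplied more than the paper does.
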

We refer to Lemma 8.9 in \cite{schnaubelt} for the proof of Lemma \ref{time-derivative-continuity}.

\subsection{Outline of the Proof of Theorem \ref{2ndcoupled system}}\label{outlinesection}

	We now outline the key steps of the main result in this article, Theorem \ref{2ndcoupled system}. Instead of considering \eqref{2ndcp1}, we look for a unique strict short-time solution $(u, v, w)$ of the following, equivalent coupled system \eqref{2ndcp3}:
	\bse\label{2ndcp3}
	\be\label{2ndcp3-1}
	\frac{\partial u}{\partial t}=\frac{1}{w}\frac{\partial}{\partial x}\left(w^3u\frac{\partial u}{\partial x}\right)-\frac{v}{w}u,\quad x\in\Omega,\ t\geq 0;
	\ee
	\be\label{2ndcp3-2}
	\frac{\partial v}{\partial t}=\frac{\partial^2w}{\partial x^2}-\frac{\beta_F}{w^2}+\beta_p(u-1),\quad x\in\Omega,\ t\geq 0;
	\ee
	\be\label{2ndcp3-3}
	\frac{\partial w}{\partial t}=v,\quad x\in\Omega,\ t\geq 0.
	\ee
	\ese
	The initial values are given by $u(x,0)=u_0(x)$, $v(x,0)=v_0(x)$, $w(x,0)=w_0(x)$, $x\in {\Omega}$, and boundary values by $u(x,t)=\theta_1$,  $w(x,t)=\theta_2$,  $x\in\partial\Omega$, $t\geq 0$.
	
	Section \ref{2nd-order problem} shows that there exists a unique solution $(v, w)$ of the hyperbolic sub-system \eqref{2ndcp3-2}, \eqref{2ndcp3-3} for given, appropriately regular $u$, initial values $v(x,0)=v_0(x)$, $w(x,0)=w_0(x)$, $x\in {\Omega}$ and boundary values $w(x,t)=\theta_2$, $x\in\partial\Omega,\ t\geq 0$. Section \ref{SolnOp} then establishes relevant properties of solution operators $u\mapsto (v,w)=(v(u),w(u))$ for short time intervals $[0,T]$, such as Theorem \ref{2ndt1}, which is a restatement of 
Theorem \ref{2nd-cpl} in Section 4. 
	\begin{thm}\label{2ndt1}
		The solution operator
		\[ W: C\left([0, T]; B_{H^2}(u_0,  r)\right)\to C\left([0, T]; B_{L^2}(v_0,  r)\times B_{H^1}(w_0,  r)\right) \] \[u \mapsto W(u)=\left(v, w\right)=\left(v(u), w(u)\right)\]
		is Lipschitz continuous with respect to $u$, i.e.
		\begin{equation*}
			\left\|W({u}_1)-W({u}_2)\right\|_{C\left([0, T]; L^{2}(\Omega)\times H^{1}(\Omega)\right)}\leq L_{W}\|{u}_1-{u}_2\|_{C\left([0, T]; H^2(\Omega)\right)},
		\end{equation*}
		where $r>0$ is sufficiently small, $L_{W}>0$ is a Lipschitz constant,
		\[B_{L^2}(V,  r)=\left\{f\in L^2(\Omega):\  \|f-V\|_{L^2(\Omega)}\leq r\right\}, \text{~with~} V\in L^2(\Omega),\]
		\[B_{X}(U,  r)=\left\{f\in X:\ f|_{\partial\Omega}=U|_{\partial\Omega},\ \|f-U\|_{X}\leq r\right\},\quad \text{where}\ X=H^1(\Omega),\ H^2(\Omega) \text{~and~} U\in X.\]
	\end{thm}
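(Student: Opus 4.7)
The plan is to cast the hyperbolic sub--system \eqref{2ndcp3-2}--\eqref{2ndcp3-3} as a first-order abstract evolution equation on the energy space $\mathscr{X}=L^2(\Omega)\times H^1_0(\Omega)$, express $W(u)$ via Duhamel's formula, and then use the fact that in one space dimension $H^1(\Omega)$ is a Banach algebra to control the difference of two solutions by the difference of the driving pressures.

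First I would subtract off the boundary data, setting $\tilde w=w-\theta_2$, and write $\Phi=(v,\tilde w)^{\mathrm T}$, so that the sub-system reads
\begin{equation*}
\Phi'(t)=\mathcal A\Phi(t)+\mathcal G(\Phi(t),u(t)),\qquad \Phi(0)=(v_0,w_0-\theta_2)^{\mathrm T},
\end{equation*}
with
\begin{equation*}
\mathcal A=\begin{pmatrix}0&\partial_x^2\\ I&0\end{pmatrix},\qquad
\mathcal G(\Phi,u)=\begin{pmatrix}-\dfrac{\beta_F}{(\tilde w+\theta_2)^2}+\beta_p(u-1)\\[4pt]0\end{pmatrix}.
\end{equation*}
The operator $\mathcal A$ with domain $H^1_0(\Omega)\times (H^2\cap H^1_0)(\Omega)$ generates the standard wave-equation $C_0$-semigroup $\{T(t)\}_{t\ge 0}$ on $\mathscr X$, which is a contraction (or at least uniformly bounded on $[0,T]$) in the energy norm. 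By the previous sections there is a short time $T>0$ so that, for every $u\in C([0,T];B_{H^2}(u_0,r))$, the corresponding mild solution $\Phi(u)$ exists on $[0,T]$, remains in $C([0,T];B_{L^2}(v_0,r)\times B_{H^1}(w_0,r))$, and satisfies the Duhamel identity
\begin{equation*}
\Phi(u)(t)=T(t)\Phi(0)+\int_0^tT(t-s)\,\mathcal G(\Phi(u)(s),u(s))\,ds.
\end{equation*}

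Next, given $u_1,u_2\in C([0,T];B_{H^2}(u_0,r))$ with corresponding solutions $\Phi_i=(v_i,w_i-\theta_2)$, I would subtract the two Duhamel formulas and use boundedness of $T(\cdot)$ on $\mathscr X$ to obtain
\begin{equation*}
\|\Phi_1(t)-\Phi_2(t)\|_{\mathscr X}\le C\int_0^t\bigl\|\mathcal G(\Phi_1(s),u_1(s))-\mathcal G(\Phi_2(s),u_2(s))\bigr\|_{\mathscr X}\,ds.
\end{equation*}
The first component of $\mathcal G$ splits into the pressure term $\beta_p(u_1-u_2)$, easily bounded in $L^2$ by $C\|u_1-u_2\|_{H^2}$ via Sobolev embedding, and the electrostatic term
\begin{equation*}
\beta_F\left(\frac{1}{w_2^2}-\frac{1}{w_1^2}\right)=\beta_F\,\frac{(w_1-w_2)(w_1+w_2)}{w_1^2\,w_2^2}.
\end{equation*}
Here the key observation is that, by shrinking $r$ if necessary, both $w_i$ lie in an $H^1$-neighbourhood of $w_0$, and since $w_0>0$ and $H^1(\Omega)\hookrightarrow C(\overline{\Omega})$, they are uniformly bounded below by some $c>0$ on $[0,T]\times\overline{\Omega}$; combined with the Banach-algebra property of $H^1(\Omega)$ in one dimension, this yields a uniform Lipschitz estimate
\begin{equation*}
\Bigl\|\tfrac{1}{w_1^2}-\tfrac{1}{w_2^2}\Bigr\|_{L^2}\le C\|w_1-w_2\|_{H^1}.
\end{equation*}

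Putting these bounds together gives
\begin{equation*}
\|\Phi_1(t)-\Phi_2(t)\|_{\mathscr X}\le C\int_0^t\|\Phi_1(s)-\Phi_2(s)\|_{\mathscr X}\,ds+C\,T\,\|u_1-u_2\|_{C([0,T];H^2)},
\end{equation*}
so Gronwall's inequality and taking the supremum over $t\in[0,T]$ delivers the claimed Lipschitz bound with constant $L_W=C\,T\,e^{CT}$. The main obstacle is verifying that the electrostatic nonlinearity $w\mapsto -\beta_F/w^2$ is genuinely Lipschitz as a map $B_{H^1}(w_0,r)\to L^2(\Omega)$ with a constant independent of the particular $u_i$; this rests crucially on the one-dimensional $H^1$-algebra structure and on the uniform lower bound $w_i\ge c>0$, which in turn forces $r$ to be chosen small enough that the pointwise positivity of $w_0$ is not lost on the short time interval $[0,T]$.
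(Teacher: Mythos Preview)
Your proposal is correct and follows essentially the same approach as the paper: subtract the two Duhamel representations, use the uniform boundedness of the wave semigroup together with the Lipschitz estimate for $w\mapsto -\beta_F/w^2$ (which rests on the $H^1$ algebra property and the uniform lower bound $w\ge \kappa/2$ guaranteed by $r<\kappa/(2C)$), and close with Gronwall. The paper packages the Lipschitz bound on the electrostatic nonlinearity into a separate lemma and routes the estimate through the $H^1$ norm rather than $L^2$, but the argument and the resulting constant $L_W=T_0M_0\beta_p e^{M_0L_GT_0}$ are the same as yours.
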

The arguments further give information about the Fr\'{e}chet derivative of the solution operator $W$, as stated in the following two corollaries, Corollary \ref{2ndc2} and Corollary \ref{2ndc3}, which are the same as 
 Corollary \ref{2Lip-Frechet-W} and Corollary \ref{2Holder-Frechet-W-I-Cor}, respectively, in Section 4, but stated differently. 
	\begin{cor}\label{2ndc2}
		Given $u\in C\left([0, T]; B_{H^2}(u_0,  r)\right)$ and small $r>0$,  the Fr\'{e}chet derivative $W'(u)$ of $W(u)$, 
		\[{W}{'}(u): C\left([0,T]; H^2(\Omega)\cap H_0^1(\Omega)\right)\to C\left([0,T]; L^2(\Omega)\times H_0^1(\Omega)\right),\]
		\[q\mapsto W'(u)q=\left(v'(u)q, w'(u)q\right)\]
		is Lipschitz continuous with respect to $u$, i.e.~for $\left\|q\right\|_{C\left([0,T]; H^2(\Omega)\right)}\leq 1$,
		\[\left\|{W}'(u_1)q-{W}'(u_2)q\right\|_{C\left([0, T]; L^{2}(\Omega)\times H^{1}(\Omega)\right)}\leq L_{F}\left\|{u}_1-{u}_2\right\|_ {C\left([0, T]; H^2(\Omega)\right)}.\]
		Here $L_F$ is a Lipschitz constant.
	\end{cor}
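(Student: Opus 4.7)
My plan is to first identify the linearized wave problem satisfied by $W'(u)q$, then compare the two linearized problems for $u_1$ and $u_2$ and close the estimate by a standard energy/semigroup argument for the linear wave equation, relying on the Lipschitz continuity of $W$ already established in Theorem \ref{2ndt1}. Formally differentiating the hyperbolic sub-system \eqref{2ndcp3-2}--\eqref{2ndcp3-3} in the direction $q$ and setting $(V,W):=W'(u)q=(v'(u)q,w'(u)q)$ yields the linear, non-autonomous wave system
\begin{align*}
\partial_t V &= \partial_{xx} W + \tfrac{2\beta_F}{w(u)^3}\, W + \beta_p\, q, \\
\partial_t W &= V,
\end{align*}
with zero initial data and $W|_{\partial\Omega}=0$. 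The coefficient $2\beta_F/w(u)^3$ is under control because $w(u)$ is uniformly positive and $H^1$-regular in $x$ (the 1D algebra property of $H^1(\Omega)$ is crucial here). Well-posedness of this system in $C([0,T];L^2\times H_0^1)$ and the a priori bound $\|W'(u)q\|_{C([0,T];L^2\times H^1)}\leq C\|q\|_{C([0,T];H^2)}$ both follow by the same semigroup construction used to prove Theorem \ref{2ndt1}.

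Next I would form the difference $(\Delta V,\Delta W):=W'(u_1)q-W'(u_2)q$. Subtracting the two linearized systems and splitting the coefficient difference gives
\begin{align*}
\partial_t \Delta V &= \partial_{xx}\Delta W + \tfrac{2\beta_F}{w(u_1)^3}\,\Delta W + F, \\
\partial_t \Delta W &= \Delta V,
\end{align*}
with zero data, where the new forcing is
\[
F(t):=2\beta_F\!\left(\tfrac{1}{w(u_1)^3}-\tfrac{1}{w(u_2)^3}\right)\! W_2(t), \qquad W_2:=w'(u_2)q.
\]
Note that the original $\beta_p q$ term cancels. Using the $H^1$-algebra property, the uniform lower bound on $w(u_i)$, and Theorem \ref{2ndt1}, I estimate
\[
\|F(t)\|_{L^2}\leq C\,\|w(u_1)-w(u_2)\|_{H^1}\,\|W_2\|_{H^1} \leq C\,L_W\,\|u_1-u_2\|_{C([0,T];H^2)}\,\|q\|_{C([0,T];H^2)}.
\]

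The last step is to apply the energy inequality / semigroup representation for the linearized wave equation to the difference system. Since $(\Delta V,\Delta W)$ has zero initial data and $F\in C([0,T];L^2)$ is bounded as above, the argument already used for Theorem \ref{2ndt1} produces
\[
\|(\Delta V,\Delta W)\|_{C([0,T];L^2\times H^1)} \leq C'\,\|F\|_{C([0,T];L^2)} \leq L_F\,\|u_1-u_2\|_{C([0,T];H^2)}\,\|q\|_{C([0,T];H^2)},
\]
after possibly shrinking $T$ so that the $w(u_1)^{-3}$-term is absorbed by Gr\"onwall. Restricting to $\|q\|_{C([0,T];H^2)}\leq 1$ yields the desired Lipschitz estimate.

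The main obstacle is not this energy comparison, which is essentially a replay of the proof of Theorem \ref{2ndt1} with a softer forcing, but rather the preceding justification that $W'(u)$ is genuinely the Fr\'echet derivative of $W$ at $u$, acting as the solution operator of the linearized wave system in the stated spaces. This requires that the remainder $W(u+q)-W(u)-W'(u)q$ be $o(\|q\|_{C([0,T];H^2)})$ in $C([0,T];L^2\times H^1)$, which I would establish by writing an equation for this remainder, analogous to the difference equation above but with an additional quadratic source coming from the Taylor expansion of $1/w^2$, and controlling it by the same semigroup estimates combined with the $H^1$-algebra property.
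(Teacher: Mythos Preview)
Your proposal is correct and follows essentially the same route as the paper: the paper also writes $w'(u)q$ via the Duhamel/semigroup formula for the linearized wave system, forms the difference $w'(u_1)q-w'(u_2)q$, splits the nonlinear term $\tfrac{w'(u_1)q}{w(u_1)^3}-\tfrac{w'(u_2)q}{w(u_2)^3}$ into a piece proportional to the difference (absorbed by Gronwall) and a piece proportional to $\big(\tfrac{1}{w(u_1)^3}-\tfrac{1}{w(u_2)^3}\big)$ (controlled via Theorem~\ref{2ndt1} and the $H^1$-algebra property), and then closes with Gronwall; the $v'$-component is handled analogously. The only cosmetic difference is that the paper works directly with the integral representation and the semigroup components $T_{11},T_{21}$ rather than writing the linearized system in PDE form.
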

	\begin{cor}\label{2ndc3}
		If $r>0$ is small and given $u\in C^\alpha\left([0, T]; B_{H^2}(u_0,  r)\right)$,
		then there exists a Lipschitz constant $L_M>0$, such that
		\begin{align}
			\sup_{0\leq t<t+h\leq T}\left\|[{W}'(u)q](t+h)-[{W}'(u)q](t)\right\|_{L^{2}(\Omega)\times H^{1}(\Omega)}\leq&h^\alpha L_{M}\left\|q\right\|_{C\left([0,T]; H^2(\Omega)\right)}\notag\\
			+&h^\alpha TL_{M}\left\|q\right\|_{C^\alpha\left([0, T]; H^2(\Omega)\right)}\notag
		\end{align}
		holds for all $q\in C^\alpha([0, T]; B_{H^2}(\tilde{u}_0,  r))$, where  $\tilde{u}_0=u_0-\theta_1$,.
	\end{cor}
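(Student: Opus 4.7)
The plan is to realise $\Phi(t):=[W'(u)q](t)=(\delta v(t),\delta w(t))$ as the mild solution of the linearised hyperbolic sub-system and to estimate its time-increments through the standard splitting of the variation-of-constants formula. Formal differentiation of \eqref{2ndcp3-2}--\eqref{2ndcp3-3} with respect to $u$ in the direction $q$ yields
\begin{equation*}
\partial_t(\delta w)=\delta v,\qquad \partial_t(\delta v)=\partial_{xx}(\delta w)+\frac{2\beta_F}{w(u)^3}\,\delta w+\beta_p q,
\end{equation*}
with $\delta v(0)=\delta w(0)=0$ and $\delta w|_{\partial\Omega}=0$. Setting $\mathscr{X}:=L^2(\Omega)\times H_0^1(\Omega)$ and letting $\mathcal{A}$ be the first-order wave generator on $\mathscr{X}$ from Section \ref{2nd-order problem}, with $C_0$-semigroup $T(\cdot)$, this becomes $\Phi'=\mathcal{A}\Phi+F(t)$, where $F(t)=\bigl(\tfrac{2\beta_F}{w(u)(t)^3}\delta w(t)+\beta_p q(t),\,0\bigr)^{\!\top}$, so that $\Phi(t)=\int_0^t T(t-s)F(s)\,ds$. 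The substitution $s\mapsto s+h$ produces the splitting
\begin{equation*}
\Phi(t+h)-\Phi(t)=\int_0^h T(t+h-s)F(s)\,ds+\int_0^t T(t-s)\bigl[F(s+h)-F(s)\bigr]ds=:I+II.
\end{equation*}

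For $I$, combining Corollary \ref{2ndc2} (and linearity of $W'$) with the standard energy estimate for the linearised system at the constant reference $u_0$ gives $\|F\|_{C([0,T];\mathscr{X})}\le C\|q\|_{C([0,T];H^2(\Omega))}$, whence
\begin{equation*}
\|I\|_{\mathscr{X}}\le Mh\,C\|q\|_{C([0,T];H^2(\Omega))}\le MT^{1-\alpha}Ch^\alpha\|q\|_{C([0,T];H^2(\Omega))},
\end{equation*}
matching the first term of the claim. For $II$ I would use the decomposition
\begin{equation*}
F(s+h)-F(s)=\tfrac{2\beta_F}{w(u)(s+h)^3}[\delta w(s+h)-\delta w(s)]+2\beta_F\Bigl[\tfrac{1}{w(u)(s+h)^3}-\tfrac{1}{w(u)(s)^3}\Bigr]\delta w(s)+\beta_p[q(s+h)-q(s)]
\end{equation*}
(in the $\delta v$-component). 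Since $\partial_t w=v\in L^\infty([0,T];H_0^1(\Omega))\hookrightarrow L^\infty([0,T];L^\infty(\Omega))$ in one dimension, and $w(u)(t)\ge w_{\min}>0$ on $[0,T]$ for $T$ small (because $w_0>0$ and $w$ is continuous in $t$), the map $s\mapsto w(u)(s)^{-3}$ is Lipschitz into $L^\infty(\Omega)$; likewise $\partial_t(\delta w)=\delta v\in L^\infty([0,T];L^2(\Omega))$ makes $\delta w$ Lipschitz into $L^2(\Omega)$. Together with the hypothesis $q\in C^\alpha([0,T];H^2(\Omega))$ for the last bracket, this gives $\|F(s+h)-F(s)\|_{\mathscr{X}}\le Ch\|q\|_{C([0,T];H^2(\Omega))}+\beta_p h^\alpha\|q\|_{C^\alpha([0,T];H^2(\Omega))}$. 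Integrating over $[0,t]$ and using $\|T(\cdot)\|\le M$ together with $h\le T^{1-\alpha}h^\alpha$ produces the $h^\alpha T L_M\|q\|_{C^\alpha}$ contribution of the claim, while the $\|q\|_C$ part is absorbed into the first summand.

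The principal obstacle is that $T(\cdot)$ is merely a $C_0$-semigroup, not analytic, so no parabolic smoothing is available and the entire factor $h^\alpha$ must be paid out of the time-regularity of $F$. This forces careful use of the internal identities $w_t=v$ and $\delta w_t=\delta v$ to trade the time-regularity of the hyperbolic solutions---which are Lipschitz only in the weak norms $L^\infty$ (for $w^{-3}$) and $L^2$ (for $\delta w$)---for bounds on the coefficients and factors appearing in $F$; fortunately these weak norms are exactly strong enough to control $F$ in $\mathscr{X}$. A secondary technical point is the uniform lower bound $w(u)(t)\ge w_{\min}>0$ on $[0,T]$, which holds on a short time window by continuity together with $w_0>0$.
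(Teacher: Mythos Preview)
Your overall architecture matches the paper: you represent $\Phi=[W'(u)q]$ as the mild solution of the linearised first-order system, split $\Phi(t+h)-\Phi(t)$ into the short integral $I$ over $[0,h]$ and the increment integral $II$ over $[0,t]$, and estimate each piece. The handling of $I$ and of the $\beta_p[q(s+h)-q(s)]$ contribution in $II$ is exactly as in the paper.

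There is, however, one genuine regularity gap. You assert that $\partial_t w=v\in L^\infty([0,T];H_0^1(\Omega))$ in order to make $s\mapsto w(u)(s)^{-3}$ Lipschitz into $L^\infty(\Omega)$. Under the sole hypothesis $u\in C^\alpha([0,T];B_{H^2}(u_0,r))$ this is not available: the strict-solution regularity $v\in C([0,T];H_0^1)$ from Theorem~\ref{2nd-mild-solution-cor} requires $u\in C^1([0,T];L^2)$, which you do not have. What \emph{is} available is Corollary~\ref{2Holdercontinuity}, which gives $\|w(s+h)-w(s)\|_{H^1}\le L_U h^\alpha$; combined with Lemma~\ref{2ndestimates} this yields $\|w(s+h)^{-3}-w(s)^{-3}\|_{H^1}\le C_3 L_U h^\alpha$, and hence the same $L^\infty$ bound by Sobolev embedding. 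Replacing ``Lipschitz'' by ``H\"older-$\alpha$ with constant $L_U$'' here repairs the argument and produces the same $h^\alpha\|q\|_{C}$ contribution. This is precisely the route the paper takes in \eqref{2H02-2}.

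Your treatment of the $\delta w$-increment differs from the paper in an interesting way. The paper keeps the term $\|[w'(u)q](s+h)-[w'(u)q](s)\|_{H^1}$ on the right-hand side and closes by Gronwall (see \eqref{2H03} and the sentence following it). You instead invoke $\partial_t(\delta w)=\delta v\in L^\infty_t L^2_x$ to get $\|\delta w(s+h)-\delta w(s)\|_{L^2}\le h\,\|\delta v\|_{L^\infty_tL^2_x}\le h\,L_W\|q\|_{C}$ directly, avoiding Gronwall altogether. This is legitimate, but note that the identity $\partial_t(\delta w)=\delta v$ for the \emph{mild} solution is not automatic from the abstract semigroup framework; it holds because for the wave group one has $T_{21}(0)=0$ and $\partial_t T_{21}(t)=T_{11}(t)$ as bounded operators $L^2\to L^2$, so differentiating the Duhamel integral for $\delta w$ in the $L^2$-topology is justified. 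You should make this explicit. Once you do, your argument is a minor streamlining of the paper's: you trade the Gronwall step for a structural property of the wave semigroup.
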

Based on these results for the solution operator $u\mapsto (v,w)=(v(u),w(u))$ to the hyperbolic problem,  we obtain an existence result for the coupled system \eqref{2ndcp1} in Section \ref{2ndsection4}. To do so, we reformulate the system \eqref{2ndcp1} as an abstract quasilinear parabolic equation involving $v(u)$ and $w(u)$:
	\bse\label{2nd1QPE}
	\be\label{2nd1QPE-1}
	\frac{\partial u}{\partial t}=\frac{1}{w(u)}\frac{\partial}{\partial x}\left([w(u)]^3u\frac{\partial u}{\partial x}\right)-\frac{v(u)}{w(u)}u,\quad (x,t)\in\Omega\times(0, T),
	\ee
	\be\label{2nd1QPE-2}
	u(x,0)=u_0(x),\quad x\in\Omega,\quad u(x,t)=\theta_1,\quad (x,t)\in\partial\Omega\times[0, T].
	\ee
	\ese
	Using a contraction mapping argument, we show that the solution of \eqref{2nd1QPE} exists as long as\\ $\left(v(u), w(u)\right)\in C\left([0, T]; B_{L^2}(v_0, r)\times B_{H^1}(w_0, r)\right)$ for small $r>0$ and $T>0$.
	
	More precisely, we set $\tilde{u}=u-\theta_1$
and consider the operator 
	\[F(\tilde{u})=\frac{1}{w(\tilde{u}+\theta_1)}\frac{\partial}{\partial x}\left([w(\tilde{u}+\theta_1)]^3(\tilde{u}+\theta_1)\frac{\partial\tilde{u}}{\partial x} \right)-\frac{v(\tilde{u}+\theta_1)}{w(\tilde{u}+\theta_1)}(\tilde{u}+\theta_1).\]
	Theorem \ref{2ndt1}, Corollary \ref{2ndc2} and Corollary \ref{2ndc3} imply H\"{o}lder estimates  for the nonlinear operator $F$, i.e.~Theorem \ref{2nd1}, which slightly simplifies the statement of Lemma \ref{2ndRHSMax} in Section 5:
	\begin{thm}\label{2nd1}
		Fix $T>0$. If $\tilde{u},\ q\in C^\alpha\left([0, T]; B_{H^2}(\tilde{u}_0, r)\right)$, with $u_0 = \tilde{u}_0 +\theta_1$, then there exist constants $L_A, L_B>0$, such that for $0\leq t<t+h\leq T$,
		\be
		\left\|\left[F(\tilde{u})\right](t+h)-\left[F(\tilde{u})\right](t)\right\|_{L^2(\Omega)}\leq \left\{[\tilde{u}+\theta_1]_{C^\alpha\left(\left[0, T\right]; H^2(\Omega)\right)}+L_U\right\}L_Ah^\alpha,
		\ee
		\begin{align}
			&\left\|\left[F'(\tilde{u})q\right](t+h)-\left[F'(\tilde{u})q\right](t)-\mathcal{P}^*\left[q(t+h)-q(t)\right]\right\|_{L^2(\Omega)}\notag\\
			\leq& h^\alpha T^\alpha L_{B}\left\|q\right\|_{C^\alpha([0, T]; H^2(\Omega))}+h^\alpha T^\alpha L_{B}\left\|\tilde{u}+\theta_1\right\|_{C^\alpha([0, T]; H^2(\Omega))}\left\|q\right\|_{C^\alpha([0, T]; H^2(\Omega))}\notag\\
			+&h^\alpha L_B\left\|q\right\|_{C([0, T]; H^2(\Omega))}+h^\alpha L_B\left\|\tilde{u}+\theta_1\right\|_{C^\alpha([0, T]; H^2(\Omega))}\left\|q\right\|_{C([0, T]; H^2(\Omega))}.
		\end{align}
Lemma \ref{2ndRHSMax} further specifies the dependence of $L_A$ and $L_B$ on the given data of the problem.
	\end{thm}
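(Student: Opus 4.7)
Setting $u:=\tilde{u}+\theta_1$ and carrying out the differentiation in \eqref{2nd1QPE-1}, I would first expand
\begin{equation*}
F(\tilde{u}) \;=\; [w(u)]^2\,u\,u_{xx}\;+\;3\,w(u)\,\partial_x w(u)\,u\,u_x\;+\;[w(u)]^2\,u_x^2\;-\;\frac{v(u)}{w(u)}\,u,
\end{equation*}
so that the only summand carrying a second spatial derivative is the first, identifying the principal part as $\mathcal{P}^* q = [w(u)]^2\,u\,q_{xx}$. Since $H^1(\Omega)\hookrightarrow L^\infty(\Omega)$ in one dimension, any product of two $H^1$ functions with one $L^2$ factor lies in $L^2(\Omega)$, which is the natural target space here. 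The ball constraints $\|u(t)-u_0\|_{H^2}\le r$, $\|w(u)(t)-w_0\|_{H^1}\le r$, $\|v(u)(t)-v_0\|_{L^2}\le r$ (the last two from Theorem~\ref{2ndt1}) deliver uniform bounds for each factor on $[0,T]$, and $w(u)(t)\ge \min w_0 - Cr > 0$ for $r$ small, controlling $1/w(u)$ pointwise.

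For the first estimate, I would write $F(\tilde{u})(t+h)-F(\tilde{u})(t)$ as a telescoping sum over the factors appearing in each summand, where exactly one factor is replaced by its time increment and the others are frozen at $t$ or $t+h$. The increments of $u,u_x,u_{xx}$ are bounded by $h^\alpha[\tilde{u}]_{C^\alpha([0,T];H^2)}$ by hypothesis, while those of $w(u),\partial_xw(u),v(u)$ inherit the same H\"{o}lder exponent through the Lipschitz estimate of Theorem~\ref{2ndt1} applied to $u(t+h)-u(t)$. The frozen factors are controlled by constants depending only on the initial data and $r$, which are absorbed into $L_U$, yielding exactly $\{[\tilde{u}+\theta_1]_{C^\alpha}+L_U\}L_A h^\alpha$.

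For the second estimate, computing the Fr\'{e}chet derivative by chain rule and invoking Corollary~\ref{2ndc2} for $w'(u)q$, $v'(u)q$, the only $q_{xx}$-carrying term of $F'(\tilde{u})q$ is $\mathcal{P}^*q$; all remaining terms involve lower-order derivatives $q,q_x$ and the quantities $w'(u)q, \partial_x[w'(u)q], v'(u)q$ multiplied by coefficients built from $u$ and $\partial_x u$. I would then decompose
\begin{equation*}
[F'(\tilde{u})q](t+h)-[F'(\tilde{u})q](t)-\mathcal{P}^*[q(t+h)-q(t)]
\end{equation*}
into (i) coefficient-difference terms of the form $\bigl([w(u)]^2 u(t+h) - [w(u)]^2 u(t)\bigr)\,q_{xx}(t)$ coming from the principal part, bounded by $h^\alpha\|\tilde{u}\|_{C^\alpha}\|q\|_{C([0,T];H^2)}$ via Theorem~\ref{2ndt1}; and (ii) time-differences of the lower-order summands, which I would expand bilinearly. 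For the lower-order summands, Corollary~\ref{2ndc3} supplies the crucial H\"{o}lder-in-$t$ control $[W'(u)q]_{C^\alpha}\le L_M\|q\|_{C([0,T];H^2)}+L_M T\|q\|_{C^\alpha([0,T];H^2)}$, while H\"{o}lder regularity of $\tilde{u}$ combined with the Lipschitz estimates of Theorem~\ref{2ndt1} and Corollary~\ref{2ndc2} controls the coefficient time-increments. The $T^\alpha$ factor in the first two monomials arises by interpolating the $T$-weighted increment from Corollary~\ref{2ndc3} against $h^\alpha$ using $h\le T$.

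The main obstacle is purely combinatorial bookkeeping: each of the many product- and chain-rule contributions must be assigned an $h^\alpha$ increment attached to exactly one factor, with the resulting prefactor landing in one of the four monomials $\|q\|_{C^\alpha}$, $\|\tilde{u}\|_{C^\alpha}\|q\|_{C^\alpha}$, $\|q\|_C$, or $\|\tilde{u}\|_{C^\alpha}\|q\|_C$. Tracking whether a given $h^\alpha$ factor originates from a time-increment of a $u$-dependent coefficient (producing $\|\tilde{u}\|_{C^\alpha}$) or of a $q$-dependent factor (producing $\|q\|_{C^\alpha}$ or $\|q\|_C$) is the only genuine source of difficulty; the individual estimates themselves reduce to the one-dimensional embedding $H^1\hookrightarrow L^\infty$ together with the continuity properties of $W$ already established in Sections~\ref{2nd-order problem} and~\ref{SolnOp}.
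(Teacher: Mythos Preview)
There is a genuine gap in your treatment of the second estimate. You write that ``the only $q_{xx}$-carrying term of $F'(\tilde{u})q$ is $\mathcal{P}^*q$'' and identify $\mathcal{P}^*q=[w(u)]^2\,u\,q_{xx}$, but this misreads the definition: $\mathcal{P}^*$ is the \emph{time-independent} operator built from the initial data $w_0,u_0,v_0$ (see \eqref{2ndlinearopt}), not from $w(u)(t),u(t)$. Consequently the expression
\[
[F'(\tilde{u})q](t+h)-[F'(\tilde{u})q](t)-\mathcal{P}^*[q(t+h)-q(t)]
\]
still contains a genuinely second-order piece, schematically
\[
\Bigl(\tfrac{1}{w(t)}\partial_x\!\bigl([w(t)]^3u(t)\,\cdot\,\bigr)-\tfrac{1}{w_0}\partial_x\!\bigl(w_0^3u_0\,\cdot\,\bigr)\Bigr)\partial_x[q(t+h)-q(t)],
\]
together with the analogous first-order and zeroth-order remainders. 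Your decomposition (i) captures the coefficient increment between $t$ and $t+h$ acting on $q_{xx}$ at a fixed time, but misses this comparison with the frozen initial-data coefficients. In the paper's proof (estimate \eqref{2ndB27}) this missing piece is bounded via $\|[w(t)]^3u(t)-w_0^3u_0\|_{H^1}\lesssim t^\alpha\le T^\alpha$, using the H\"older-in-$t$ regularity of $w$ and $u$ from $t=0$; this is the \emph{primary} source of the $T^\alpha$ prefactor, and it is precisely this smallness that later drives the contraction in Theorem~\ref{2ndcp-sys}. Your explanation that $T^\alpha$ ``arises by interpolating the $T$-weighted increment from Corollary~\ref{2ndc3} against $h^\alpha$ using $h\le T$'' accounts only for a secondary contribution (the $T$ in Corollary~\ref{2ndc3} is converted to $T^\alpha$ via $T\le T_0^{1-\alpha}T^\alpha$, absorbing $T_0^{1-\alpha}$ into $L_B$) and would not by itself close the argument.

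A smaller point: for the first estimate you invoke Theorem~\ref{2ndt1} to control the time increments of $w(u),v(u)$, but that theorem compares $W(u_1)$ with $W(u_2)$ for two different inputs, not $W(u)(t+h)$ with $W(u)(t)$ for a single $u$. The correct tool is Corollary~\ref{2Holdercontinuity}, whose constant is exactly the $L_U$ appearing in \eqref{2ndMax-I}; $L_U$ measures the time-H\"older regularity of the mild solution $(v,w)$, not the size of the frozen factors.
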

The linearisation $\mathcal{P}^*$ of $F$ around the initial condition $\tilde{u}_0$  
is shown to generate an analytic semigroup $\left\{e^{t\mathcal{P}^*}:\ t\geq 0\right\}$. 

To prove the short-time existence result for the nonlinear problem \eqref{2nd2QPE}, we now rewrite \eqref{2nd1QPE} in the form
	\be\label{2nd2QPE}
	\tilde{u}'(t)=\mathcal{P}^*\tilde{u}(t)+[F(\tilde{u})](t)-\mathcal{P}^*\tilde{u}(t),\quad t\in(0, T),\quad \tilde{u}(0)=\tilde{u}_0 ,
	\ee
and use a fixed point argument for the nonlinear mapping $\Gamma$ on a suitable space, defined by
	\be\label{2nd0}
	\Gamma(\tilde{u}(t))=e^{t\mathcal{P}^*}\tilde{u}_0+ \int_0^te^{(t-s)\mathcal{P}^*}\left\{[F(\tilde{u})](s)-\mathcal{P}^*\tilde{u}(s)\right\}ds,\quad t\in [0, T].
	\ee
%
	Combining the existence and regularity results from Section 3 with the existence of a unique strict solution of \eqref{2nd2QPE}, we conclude the proof of Theorem \ref{2ndcoupled system}.

\section{Well-Posedness Results for the Hyperbolic Equation}\label{2nd-order problem}


Let $u_0\in H^{2+\sigma}(\Omega)$, $v_0\in H_0^1(\Omega)$ and $w_0\in H^2(\Omega)$ be given functions which are compatible with  boundary conditions $u_0|_{\partial\Omega}=\theta_1$ and $w_0|_{\partial\Omega}=\theta_2$, where  $\sigma\in(0, \frac{1}{2})$. Set $r\in\left(0, \frac{\kappa}{2C}\right)$ with a constant $C=C(\Omega)>0$  and $\kappa=\inf_{x\in{\Omega}}w_0(x)$.  We introduce the state space
\be\label{2state-space}
\mathscr{X}=L^2(\Omega)\times H_0^1(\Omega),
\ee
where $\mathscr{X}$ is endowed with the norm $\left\|\cdot\right\|_{\mathscr{X}}=\left\|\cdot\right\|_{L^2(\Omega)\times H^1(\Omega)}$ and the scalar product
\[\langle \mathbf{a}, \mathbf{b}\rangle_{\mathscr{X}}= \int_\Omega a_1\cdot\overline{b}_1+\nabla a_2\cdot\nabla\overline{b}_2dx,\quad \mathbf{a}=\left(a_1, a_2\right)\in \mathscr{X}, \quad \mathbf{b}=\left(b_1, b_2\right)\in \mathscr{X}.\]
We define an operator $\Delta_D$ by
\be\label{2domain1}D(\Delta_D):=\big\{\varphi\in H_0^1(\Omega): \exists\ f\in L^2(\Omega),\ \forall\ \psi\in H_0^1(\Omega),\ \text{such that} \int_\Omega\nabla\varphi\cdot\nabla\overline{\psi}dx=\int_\Omega f\cdot\overline{\psi}dx \big\},\ee
\be\label{2domain2}	\Delta_D\varphi:=-f,\ \text{where} \ f \ \text{is given by} \ D(\Delta_D), \quad \left\|\varphi\right\|_{D(\Delta_D)}:=\left\|\varphi\right\|_{L^2(\Omega)}+\left\|\Delta_D\varphi\right\|_{L^2(\Omega)}.
\ee
From elliptic regularity theory, it follows that
\[D(\Delta_D)=\left\{\varphi\in H^{2}(\Omega): \ \varphi|_{\partial\Omega}=0\right\}=H^2(\Omega)\cap H_0^1(\Omega),\quad\left\|\varphi\right\|_{D(\Delta_D)}\simeq\|\varphi\|_{H^2(\Omega)}.\]
We also define the linear operator $\mathcal{A}$ with its domain $D(\mathcal{A})$ and the graph norm of $\mathcal{A}$ by
\bse\label{2A-1}
\be\label{2A-1-1}
\mathcal{A}=\begin{pmatrix}0\ &\Delta_D\\ 1\ &0\end{pmatrix}, \quad D(\mathcal{A})= H_0^1(\Omega)\times D(\Delta_D) ,
\ee
\be\label{2A-1-2}
\|\mathbf{a}\|_{D(\mathcal{A})}:=\|\mathbf{a}\|_{X}+\|\mathcal{A}\mathbf{a}\|_{X}\simeq\|a_1\|_{H^1(\Omega)}+\|a_2\|_{H^2(\Omega)}, \quad\mathbf{a}=\left(a_1, a_2\right)\in D(\mathcal{A}).
\ee
\ese
Let $T\in(0, \infty)$ to be specified below. We now study the initial-boundary value problem for the semilinear hyperbolic equation \eqref{2ndcp1-1-2} for $w$ for a given, fixed function   $u\in C\left([0, T]; B_{H^2}\left(u_0, r\right)\right)$,   subject to initial values 
\be\label{2nd-semilinear-hyperbolic-equation-2}
w(x,0)=w_0(x),\quad\frac{\partial w}{\partial t}(x,0)=v_0(x), \quad x\in\Omega,
\ee
and  Dirichlet boundary conditions
\be\label{2nd-semilinear-hyperbolic-equation-3}
w(x,t)=\theta_2, \quad (x,t)\in\partial\Omega\times[0, T].
\ee
We define $\tilde{w}(x,t)=w(x,t)-\theta_2$ with $\tilde{w}(t):\Omega\to \mathbb{R}$, $x\mapsto [\tilde{w}(t)](x)=\tilde{w}(x, t)$. Note that the Dirichlet boundary conditions $\tilde{w}(x,t)=0$, $(x,t)\in\partial\Omega\times[0, T]$ are incorporated in the domain of the Dirichlet realisation $\Delta_D$ of the Laplace operator  in \eqref{2domain1} and \eqref{2domain2}. 
We now rewrite \eqref{2ndcp1-1-2} with \eqref{2nd-semilinear-hyperbolic-equation-2} and \eqref{2nd-semilinear-hyperbolic-equation-3} as the equation \eqref{2nd-LWE-1} on the unknown function $\tilde{w}$:
\be\label{2nd-LWE-1}
\tilde{w}''(t)=\Delta_D\tilde{w}(t)-\frac{\beta_F}{(\tilde{w}(t)+\theta_2)^2}+\beta_p(\tilde{u}(t)+\theta_1-1),\quad t\in[0, T],\quad
\tilde{w}(0)=\tilde{w}_0,\quad \tilde{w}'(0)=\tilde{v}_0,
\ee
where $\tilde{w}'$ and $\tilde{w}''$ denote, respectively, the first and second derivatives of the unknown function $\tilde{w}$ with respect to $t\in (0, T)$;  $\tilde{u}=u-\theta_1$ is given in $ C\left([0, T]; B_{H^2}\left(\tilde{u}_0, r\right)\right)$. Observe that \[\tilde{u}_0=u_0-\theta_1\in H^{2+\sigma}(\Omega)\cap H_0^1(\Omega), \ \sigma\in\left(0, {1}/{2}\right),\ \tilde{v}_0=v_0\in H_0^1(\Omega),\ \tilde{w}_0=w_0-\theta_2\in H^2(\Omega)\cap H_0^1(\Omega).\]    
For a time-dependent state $\Phi(t)=\left(\varphi_1(t),\ \varphi_2(t)\right)$ we write
\be\label{2ini-Phi}
\Phi_0=\left(\tilde{v}_0, \tilde{w}_0\right)\in D(\mathcal{A})
\ee 
and consider the nonlinear operator
\be\label{2G-2-2}
[G(\varphi_2)](t)=-\frac{\beta_F}{(\varphi_2(t)+\theta_2)^2}+\beta_p(\theta_1-1),\ \ \mathcal{G}=\left[\mathcal{G}\left(\Phi\right)\right](t)=\left(\left[G\left({\varphi}_2\right)\right](t)+\beta_p\tilde{u}(t), 0\right).
\ee
The existence of a unique strict solution of the equation \eqref{2nd-LWE-1} is shown in Theorem \ref{2nd-mild-solution-cor}, which is proved using  
Lemma \ref{2equivalence-1} 
to Corollary \ref{2Holdercontinuity}. 
The proofs of these results, from Lemma \ref{2equivalence-1} to Theorem \ref{2nd-mild-solution-cor}, follow from adapting well-known arguments to  equation \eqref{2nd-LWE-1} and are presented in Appendix \ref{AppB-1}. These auxiliary results lead to
the well-posedness of the semilinear hyperbolic equation \eqref{2ndcp1-1-2} subject to initial values  \eqref{2nd-semilinear-hyperbolic-equation-2} and boundary conditions \eqref{2nd-semilinear-hyperbolic-equation-3}, for a given function $u$.

The first Lemma gives an equivalent, abstract formulation of the problem:
\begin{lem}\label{2equivalence-1}
	Let $\tilde{u}\in C\left([0, T]; B_{H^2}\left(\tilde{u}_0, r\right)\right)\cap C^1\left([0, T]; L^2(\Omega)\right)$ be given, and define $\mathcal{A}$ and $\mathcal{G}$ by \eqref{2A-1} and \eqref{2G-2-2} respectively. The semilinear hyperbolic equation \eqref{2nd-LWE-1} has a unique solution
	\[\tilde{w}\in C^2([0, T]; L^2(\Omega))\cap C^1([0, T]; H_0^1(\Omega))\cap C([0, T]; H^2(\Omega)\cap H_0^1(\Omega))\]
	if and only if the semilinear evolution equation
	\be\label{2nd-IEE}
		\Phi'(t)=\mathcal{A}\Phi(t)+\left[\mathcal{G}\left(\Phi\right)\right](t), \quad t\in [0, T],\quad \Phi(0)=\Phi_0,
	\ee
	has a unique solution
	\[\Phi\in C([0, T]; D(\mathcal{A}))\cap C^1([0, T]; X).\]
	In this case, $\Phi=\left(\tilde{w}', \tilde{w}\right)$.
\end{lem}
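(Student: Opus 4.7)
My plan is to prove the lemma as a direct bookkeeping exercise: the correspondence $\Phi = (\tilde w',\tilde w)$ converts the scalar second-order equation \eqref{2nd-LWE-1} into a first-order system on $\mathscr X$, and I simply need to check that this map sets up a bijection between solutions with matching regularity. Componentwise, the definitions \eqref{2A-1} and \eqref{2G-2-2} give $\mathcal A \Phi + \mathcal G(\Phi) = (\Delta_D \varphi_2 + [G(\varphi_2)](t) + \beta_p \tilde u(t),\; \varphi_1)$, so \eqref{2nd-IEE} is the pair of scalar identities $\varphi_1' = \Delta_D \varphi_2 + [G(\varphi_2)](t) + \beta_p \tilde u(t)$ and $\varphi_2' = \varphi_1$. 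The initial datum $\Phi_0 = (\tilde v_0, \tilde w_0)$ encodes exactly $\tilde w(0) = \tilde w_0$ and $\tilde w'(0) = \tilde v_0$.

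For the direction \emph{hyperbolic $\Rightarrow$ abstract}, given a solution $\tilde w \in C^2([0,T];L^2)\cap C^1([0,T];H_0^1)\cap C([0,T];H^2\cap H_0^1)$ of \eqref{2nd-LWE-1}, I would set $\Phi := (\tilde w', \tilde w)$. Reading off the regularities, $\tilde w' \in C([0,T];H_0^1)$ and $\tilde w \in C([0,T];H^2\cap H_0^1)$ give $\Phi \in C([0,T];D(\mathcal A))$ by \eqref{2A-1-2}, while $\tilde w'' \in C([0,T];L^2)$ and $\tilde w' \in C([0,T];H_0^1)$ give $\Phi' \in C([0,T];\mathscr X)$, i.e.~$\Phi \in C^1([0,T];\mathscr X)$. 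Substituting into \eqref{2nd-LWE-1} and pairing with the trivial identity $\tilde w' = \tilde w'$ produces $\Phi' = \mathcal A \Phi + \mathcal G(\Phi)$; the initial condition $\Phi(0) = \Phi_0$ is immediate.

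For the converse direction \emph{abstract $\Rightarrow$ hyperbolic}, let $\Phi = (\varphi_1,\varphi_2) \in C([0,T];D(\mathcal A))\cap C^1([0,T];\mathscr X)$ solve \eqref{2nd-IEE}. The second scalar equation together with the initial condition yields $\varphi_2' = \varphi_1$, so setting $\tilde w := \varphi_2$ gives $\tilde w' = \varphi_1$ and hence $\tilde w'' = \varphi_1'$. Unpacking the two function-space conditions on $\Phi$ gives $\varphi_2 \in C([0,T];H^2\cap H_0^1)$, $\varphi_2' = \varphi_1 \in C([0,T];H_0^1)$, and $\varphi_1' \in C([0,T];L^2)$, which is precisely the regularity $\tilde w \in C^2([0,T];L^2)\cap C^1([0,T];H_0^1)\cap C([0,T];H^2\cap H_0^1)$. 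The first scalar equation then rewrites as \eqref{2nd-LWE-1}, and the initial data transfer correctly. Finally, since $\tilde w \mapsto (\tilde w', \tilde w)$ is a bijection between the two solution classes, uniqueness in either problem is equivalent to uniqueness in the other. I do not expect any real obstacle here: the lemma is an equivalence of formulations, and the only item that demands attention is matching the three regularity conditions on $\tilde w$ with the two on $\Phi$, which falls out of the definition \eqref{2A-1-2} of the graph norm.
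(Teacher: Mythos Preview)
Your proposal is correct and essentially identical to the paper's own proof: both directions are established by setting $\Phi = (\tilde w', \tilde w)$ (resp.\ $\tilde w = \varphi_2$), reading off the componentwise regularity from the definitions \eqref{2A-1}, and verifying the equation and initial data directly, with uniqueness transferred via the bijection. There is nothing to add.
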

We next recall that the operator $\mathcal{A}$ is the generator of a strongly continuous semigroup.
\begin{lem}\label{2ndgenerator}
Let $\Omega$ be an open and bounded subset of $\mathds{R}$ and $\mathscr{X}$ as in \eqref{2state-space}. Then the linear operator $\mathcal{A}$ defined by \eqref{2A-1} generates a $C_0$-semigroup $\left\{T(t)\in\mathcal{B}\left(\mathscr{X}\right)\hspace*{-0.1cm}: t\in[0, \infty)\right\}$.	
\end{lem}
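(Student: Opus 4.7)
The plan is to verify that $\mathcal{A}$ is skew-symmetric with respect to the inner product of $\mathscr{X}$, which is precisely the energy inner product for the wave equation, to establish surjectivity of $\lambda I-\mathcal{A}$ for some $\lambda>0$, and then to invoke the Lumer--Phillips theorem. In fact the same pair of observations shows that $\mathcal{A}$ is skew-adjoint, so Stone's theorem yields the stronger conclusion that $\mathcal{A}$ generates a unitary $C_0$-group of isometries on $\mathscr{X}$.

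First I would note that $D(\mathcal{A})=H_0^1(\Omega)\times(H^2(\Omega)\cap H_0^1(\Omega))$ is dense in $\mathscr{X}=L^2(\Omega)\times H_0^1(\Omega)$, since $C_c^\infty(\Omega)$ is dense in both factors. For $\mathbf{a}=(a_1,a_2)$ and $\mathbf{b}=(b_1,b_2)$ in $D(\mathcal{A})$, integration by parts---legitimate because $a_1,b_1\in H_0^1(\Omega)$ and $a_2,b_2\in D(\Delta_D)$---gives
\[
\langle\mathcal{A}\mathbf{a},\mathbf{b}\rangle_{\mathscr{X}}
=\int_\Omega\Delta_D a_2\,\overline{b_1}\,dx+\int_\Omega\nabla a_1\cdot\nabla\overline{b_2}\,dx
=-\int_\Omega\nabla a_2\cdot\nabla\overline{b_1}\,dx+\int_\Omega\nabla a_1\cdot\nabla\overline{b_2}\,dx
=-\langle\mathbf{a},\mathcal{A}\mathbf{b}\rangle_{\mathscr{X}}.
\]
Choosing $\mathbf{a}=\mathbf{b}$ shows $\operatorname{Re}\langle\mathcal{A}\mathbf{a},\mathbf{a}\rangle_{\mathscr{X}}=0$, so both $\mathcal{A}$ and $-\mathcal{A}$ are dissipative.

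Next, given $\mathbf{f}=(f_1,f_2)\in\mathscr{X}$ and $\lambda>0$, solving $(\lambda I-\mathcal{A})\mathbf{a}=\mathbf{f}$ reduces, after eliminating $a_1=\lambda a_2-f_2\in H_0^1(\Omega)$, to the elliptic boundary-value problem
\[
\lambda^2 a_2-\Delta_D a_2=f_1+\lambda f_2\quad\text{in }\Omega,\qquad a_2|_{\partial\Omega}=0,
\]
whose right-hand side lies in $L^2(\Omega)$. The Lax--Milgram theorem on $H_0^1(\Omega)$ combined with standard elliptic regularity produces a unique $a_2\in H^2(\Omega)\cap H_0^1(\Omega)$; then $(a_1,a_2)\in D(\mathcal{A})$, so $\lambda I-\mathcal{A}$ is surjective. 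The Lumer--Phillips theorem now yields that $\mathcal{A}$ generates a contractive $C_0$-semigroup on $\mathscr{X}$, and running the same argument with $\lambda$ replaced by $-\lambda$ shows that this semigroup extends to a unitary $C_0$-group.

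I do not anticipate serious obstacles. The only point requiring care is that skew-symmetry depends sensitively on the choice of inner product on $\mathscr{X}$---in particular the use of the $H_0^1$-seminorm on the second component; with an $L^2$-type norm there, the integration by parts would leave lower-order remainders that destroy dissipativity and would force a less direct resort to Hille--Yosida resolvent estimates.
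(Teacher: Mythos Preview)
Your argument is correct and follows essentially the same route as the paper: establish skew-symmetry of $\mathcal{A}$ in the energy inner product via integration by parts, verify a nonempty resolvent set by reducing to an elliptic problem solved with Lax--Milgram, and conclude via Stone's theorem. The only cosmetic difference is that the paper checks invertibility at $\lambda=0$ by writing down the explicit two-sided inverse $\mathcal{R}=\begin{pmatrix}0&1\\ \Delta_D^{-1}&0\end{pmatrix}$, whereas you check surjectivity of $\lambda I-\mathcal{A}$ for $\lambda>0$ and invoke Lumer--Phillips; both lead to the same conclusion.
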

Standard arguments imply the existence of a unique mild solution to the abstract problem from Lemma \ref{2equivalence-1}. 
\begin{thm}\label{2nd-solu-thm}
	For $r\in\left(0, \frac{\kappa}{2C}\right)$ with $\kappa= \inf_{x\in{\Omega}}\left\{\tilde{w}_0(x)+\theta_2\right\}$ and a constant $C=C(\Omega)>0$,  there exists $T_0>0$,  such that for all $T\in(0, T_0)$ and given $\tilde{u} \in C\left([0, T]; B_{H^2}\left(\tilde{u}_0, r\right)\right)$, the semilinear evolution equation 
	\be\label{2nd-SWE-1}
	\begin{pmatrix}\tilde{v}'(t) \\ \tilde{w}'(t) \end{pmatrix}=\mathcal{A}\begin{pmatrix}\tilde{v}(t) \\ \tilde{w}(t) \end{pmatrix}+\begin{pmatrix}[G(\tilde{w})](t)+\beta_p\tilde{u}(t), \\ 0 \end{pmatrix}, \ t\in[0, T],\quad \begin{pmatrix}\tilde{v}(0) \\ \tilde{w}(0) \end{pmatrix}=\begin{pmatrix}\tilde{v}_0 \\ \tilde{w}_0\end{pmatrix},
	\ee
	has a unique mild solution $\left(\tilde{v}, \tilde{w}\right)\in  C\left([0, T]; L^2(\Omega)\times H_0^1(\Omega)\right)$ defined by the integral formulation
	\be\label{2ndmild-solu-form}
	\begin{pmatrix}\tilde{v}(t)\\ \tilde{w}(t)\end{pmatrix}=T(t)\begin{pmatrix}\tilde{v}_0\\ \tilde{w}_0\end{pmatrix}+
	 \int_0^t\left\{T(t-s)\begin{pmatrix}[G(\tilde{w})](s)+\beta_p\tilde{u}(s)\\ 0\end{pmatrix}\right\}ds.
	\ee
\end{thm}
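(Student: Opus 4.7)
The plan is to apply the Banach fixed point theorem on the complete metric space $Z(T)$ from \eqref{2ini-NBD}, viewed as a closed ball in $C([0,T]; \mathscr{X})$. Define the mapping
\[
\Gamma: Z(T)\to C([0,T]; \mathscr{X}),\quad \Gamma\begin{pmatrix}\tilde v\\ \tilde w\end{pmatrix}(t)=T(t)\begin{pmatrix}\tilde v_0\\ \tilde w_0\end{pmatrix}+\int_0^t T(t-s)\begin{pmatrix}[G(\tilde w)](s)+\beta_p\tilde u(s)\\ 0\end{pmatrix}ds.
\]
By Lemma \ref{2ndgenerator}, $T(t)$ is a $C_0$-semigroup on $\mathscr{X}$, so there exist $M,\omega\ge 0$ with $\|T(t)\|_{\mathcal B(\mathscr X)}\le Me^{\omega t}$; on any interval $[0,T_0]$ we therefore have a uniform bound $M_0=Me^{\omega T_0}$. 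The first task is to check that $G(\tilde w)\in C([0,T]; L^2(\Omega))$ is well defined for $(\tilde v,\tilde w)\in Z(T)$. Since $\Omega\subset\mathbb R$, the embedding $H^1(\Omega)\hookrightarrow L^\infty(\Omega)$ with constant $C=C(\Omega)$ yields, for such $\tilde w$,
\[
\|\tilde w(t)-\tilde w_0\|_{L^\infty(\Omega)}\le C\|\tilde w(t)-\tilde w_0\|_{H^1(\Omega)}\le Cr<\tfrac{\kappa}{2},
\]
so that $\tilde w(x,t)+\theta_2\ge \kappa-Cr\ge \kappa/2>0$ pointwise. Consequently $G(\tilde w)=-\beta_F/(\tilde w+\theta_2)^2+\beta_p(\theta_1-1)$ is uniformly bounded in $L^\infty(\Omega)$ (hence in $L^2(\Omega)$) on $Z(T)$ by a constant $K=K(\kappa,\beta_F,\beta_p,\theta_1)$, and is continuous in $t$ because $\tilde w\in C([0,T];H^1)\hookrightarrow C([0,T];L^\infty)$ and $x\mapsto 1/x^2$ is smooth on $[\kappa/2,\infty)$.

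Next I would verify that $\Gamma$ maps $Z(T)$ into itself for $T$ sufficiently small. By strong continuity of $T(t)$, $\|T(t)\Phi_0-\Phi_0\|_{\mathscr X}\to 0$ as $t\to 0^+$, uniformly for $t\in[0,T]$, so this term can be made smaller than $r/2$ by choosing $T\le T_1$. The integral term is controlled by
\[
\left\|\int_0^t T(t-s)\bigl([G(\tilde w)](s)+\beta_p\tilde u(s),0\bigr)^\top ds\right\|_{\mathscr X}\le M_0\,T\,\bigl(K+\beta_p(\|\tilde u_0\|_{L^2}+r)\bigr),
\]
which is less than $r/2$ for $T\le T_2$. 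Taking $T_0=\min\{T_1,T_2,\dots\}$ ensures $\Gamma(Z(T))\subset Z(T)$. One also needs to check $[\Gamma(\tilde v,\tilde w)](0)=(\tilde v_0,\tilde w_0)$, which is immediate because $T(0)=I$ and the integral vanishes at $t=0$.

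For the contraction estimate, take $(\tilde v_i,\tilde w_i)\in Z(T)$, $i=1,2$, and observe that
\[
[\Gamma(\tilde v_1,\tilde w_1)-\Gamma(\tilde v_2,\tilde w_2)](t)=\int_0^t T(t-s)\bigl([G(\tilde w_1)](s)-[G(\tilde w_2)](s),0\bigr)^\top ds.
\]
On the set where $\tilde w_i+\theta_2\ge \kappa/2$, the map $w\mapsto -\beta_F/(w+\theta_2)^2$ is smooth with derivative bounded by some $L_G=L_G(\beta_F,\kappa)$, so
\[
\|[G(\tilde w_1)](s)-[G(\tilde w_2)](s)\|_{L^2(\Omega)}\le L_G\|\tilde w_1(s)-\tilde w_2(s)\|_{L^2(\Omega)}\le L_G\|(\tilde v_1-\tilde v_2,\tilde w_1-\tilde w_2)(s)\|_{\mathscr X}.
\]
Combining with the semigroup bound gives
\[
\|\Gamma(\tilde v_1,\tilde w_1)-\Gamma(\tilde v_2,\tilde w_2)\|_{C([0,T];\mathscr X)}\le M_0\,L_G\,T\,\|(\tilde v_1-\tilde v_2,\tilde w_1-\tilde w_2)\|_{C([0,T];\mathscr X)},
\]
and choosing $T_0$ additionally so that $M_0 L_G T_0<1$ makes $\Gamma$ a strict contraction. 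The Banach fixed point theorem then produces a unique fixed point in $Z(T)$, which is the sought mild solution \eqref{2ndmild-solu-form}.

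The main obstacle is the subtle interplay between the pointwise lower bound on $\tilde w+\theta_2$ (needed to control the singular nonlinearity $1/w^2$) and the topology of $Z(T)$, which only controls $\tilde w$ in $H^1$; the choice $r<\kappa/(2C)$ is tailored precisely so that the one-dimensional Sobolev embedding $H^1\hookrightarrow L^\infty$ transfers the $H^1$ ball bound into a uniform positivity bound. Once this barrier is in place, uniform Lipschitz control of $G$ on $Z(T)$ follows and the rest is a standard semigroup fixed-point argument.
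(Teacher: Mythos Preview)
Your proposal is correct and follows essentially the same route as the paper: a Banach fixed-point argument on the closed ball $Z(T)$, using strong continuity of the semigroup to make the free-evolution term small and the Lipschitz property of $G$ (guaranteed by the lower bound $\tilde w+\theta_2\ge\kappa/2$ coming from $r<\kappa/(2C)$) to obtain the contraction. The only cosmetic difference is that the paper routes the Lipschitz estimate for $G$ through the $H^1$ algebra structure (Lemma~\ref{2ndLip-G-Lem}) before dropping to $L^2$, whereas you obtain the $L^2$ estimate directly from the pointwise bound; both yield the same contraction constant $M_0 L_G T$.
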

A refined analysis establishes H\"{o}lder and Lipschitz estimates in time, as stated in the following corollaries. 
\begin{cor}\label{2Lip-mild-solu}
	Let $T\in(0, T_0)$, $r\in\left(0, \frac{\kappa}{2C}\right)$ and given $\tilde{u}\in C\left([0, T]; B_{H^2}\left(\tilde{u}_0,  r\right)\right)\cap C^1([0, T]; L^2(\Omega))$. Then the mild solution of  \eqref{2nd-SWE-1},
	$\left(\tilde{v}, \tilde{w}\right): [0, T]\to L^2(\Omega)\times H_0^1(\Omega)$,
	defined by the integral formulation \eqref{2ndmild-solu-form}, is locally Lipschitz continuous with respect to $t\in[0, T]$, i.e.~$\forall\ h\in(0, T]$,
	\be\label{2Lip-mild-solu-inquality}
	\sup_{0\leq t<t+h\leq T}\left\|\begin{pmatrix}\tilde{v}(t+h)-\tilde{v}(t)\\ \tilde{w}(t+h)-\tilde{w}(t)\end{pmatrix}\right\|_{L^2(\Omega)\times H^1(\Omega)}\leq L_Vh.
	\ee
	Here $L_V$ is a Lipschitz constant depending on the data $\beta_F$, $\beta_p$, $T_0$, $\kappa$, $\Omega$, $\left\|\tilde{u}_0\right\|_{H^{2}(\Omega)}$, $\|\tilde{w}_0\|_{H^1(\Omega)}$, \\ $\left\|\left(\tilde{v}_0, \tilde{w}_0\right)\right\|_{D(\mathcal{A})}$,
	$M_0=  \sup_{t\in[0, \infty)}\left\|T(t)\right\|_{\mathcal{B}(L^2(\Omega)\times H_0^1(\Omega))}$ and $\left\|\tilde{u}\right\|_{C^1\left([0, T_0); L^2(\Omega)\right)}$.
\end{cor}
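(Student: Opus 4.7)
\textbf{Proof proposal for Corollary \ref{2Lip-mild-solu}.} The plan is to estimate the time-difference of the mild solution $\Phi=(\tilde v,\tilde w)$ directly from the integral formulation \eqref{2ndmild-solu-form}, then close the argument with Gronwall's inequality. First I would use the $C_0$-semigroup property and the substitution $\tau=s-h$ to write, for $0\le t<t+h\le T$,
\[
\Phi(t+h)-\Phi(t)= T(t)\bigl[T(h)\Phi_0-\Phi_0\bigr]+\int_t^{t+h} T(t+h-s)\mathcal{G}(s)\,ds+\int_0^t T(t-s)\bigl[\mathcal{G}(s+h)-\mathcal{G}(s)\bigr]ds.
\]
The first term is controlled using $\Phi_0\in D(\mathcal{A})$: by Lemma \ref{IEE-S} applied to the homogeneous equation, $T(h)\Phi_0-\Phi_0=\int_0^h T(\tau)\mathcal{A}\Phi_0\,d\tau$, whence this contribution is at most $hM_0^2\|\Phi_0\|_{D(\mathcal{A})}$. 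The second (boundary) integral is bounded by $hM_0\sup_{s\in[0,T]}\|\mathcal{G}(s)\|_{\mathscr{X}}$, and $\sup\|\mathcal{G}\|$ is finite because $\tilde u\in C([0,T];B_{H^2}(\tilde u_0,r))$ and $G(\tilde w(s))$ is uniformly bounded in $L^2(\Omega)$, as explained below.

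The key input is a pointwise/Lipschitz bound for $G$. From the choice $r\in(0,\kappa/(2C))$ and the one-dimensional Sobolev embedding $H^1(\Omega)\hookrightarrow L^\infty(\Omega)$ with constant $C$, the constraint $\|\tilde w(t)-\tilde w_0\|_{H^1(\Omega)}\le r$ (inherited from the mild solution lying in $Z(T)$) yields $w(x,t)=\tilde w(x,t)+\theta_2\ge \kappa/2$ uniformly in $(x,t)$. On the set $\{w\ge \kappa/2\}$ the map $a\mapsto -\beta_F/(a+\theta_2)^2$ is $C^1$ with bounded derivative $L_G=L_G(\beta_F,\kappa)$. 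Hence
\[
\|G(\tilde w(s+h))-G(\tilde w(s))\|_{L^2(\Omega)}\le L_G\|\tilde w(s+h)-\tilde w(s)\|_{L^\infty(\Omega)}\le L_G C\|\tilde w(s+h)-\tilde w(s)\|_{H^1(\Omega)}.
\]
Combining this with the $\tilde u$-bound $\|\tilde u(s+h)-\tilde u(s)\|_{L^2(\Omega)}\le h\|\tilde u\|_{C^1([0,T];L^2(\Omega))}$ (which is exactly where the hypothesis $\tilde u\in C^1$ is used), the integrand of the third term satisfies
\[
\|\mathcal{G}(s+h)-\mathcal{G}(s)\|_{\mathscr{X}}\le L_G C\,\|\Phi(s+h)-\Phi(s)\|_{\mathscr{X}}+h\,\beta_p\|\tilde u\|_{C^1([0,T];L^2(\Omega))}.
\]

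Substituting back and bounding the semigroup by $M_0$ gives the closed inequality
\[
\|\Phi(t+h)-\Phi(t)\|_{\mathscr{X}}\le hK_1+K_2\int_0^t\|\Phi(s+h)-\Phi(s)\|_{\mathscr{X}}\,ds,
\]
where $K_1$ depends on $\|\Phi_0\|_{D(\mathcal{A})}$, $M_0$, $T_0$, $\beta_F$, $\beta_p$, $\kappa$, $\Omega$, $\|\tilde u_0\|_{H^2(\Omega)}$, $\|\tilde w_0\|_{H^1(\Omega)}$ and $\|\tilde u\|_{C^1([0,T];L^2)}$, and $K_2=M_0L_G C$. Gronwall's inequality then yields $\|\Phi(t+h)-\Phi(t)\|_{\mathscr{X}}\le K_1e^{K_2T_0}h$, which is the desired bound with $L_V=K_1e^{K_2T_0}$.

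The main obstacle is the self-referential character of the estimate: the quantity we wish to control, $\|\Phi(s+h)-\Phi(s)\|_{\mathscr{X}}$, appears in the right-hand side through the nonlinear term $G(\tilde w)$. Resolving this requires both (i) the lower bound $w\ge \kappa/2$, which legitimises the Lipschitz estimate on $G$ and relies crucially on the one-dimensional embedding $H^1\hookrightarrow L^\infty$ together with the smallness of $r$, and (ii) Gronwall's inequality to absorb the integral term. The hypothesis $\tilde u\in C^1([0,T];L^2(\Omega))$ is essential to give a genuinely linear-in-$h$ contribution from the forcing, rather than only a H\"older estimate.
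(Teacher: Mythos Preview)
Your proof is correct and follows essentially the same route as the paper: decompose $\Phi(t+h)-\Phi(t)$ via the Duhamel formula into the homogeneous piece (controlled through $\Phi_0\in D(\mathcal{A})$), the short boundary integral over $[t,t+h]$ (controlled by a uniform bound on $\mathcal{G}$), and the main integral over $[0,t]$ (controlled by the Lipschitz estimate on $G$ together with $\tilde u\in C^1([0,T];L^2)$), then close with Gronwall. The only cosmetic difference is that the paper invokes the $H^1$-algebra Lipschitz estimate for $G$ from Lemma~\ref{2ndLip-G-Lem}, whereas you derive an $L^2$-Lipschitz bound directly from the pointwise derivative of $a\mapsto -\beta_F(a+\theta_2)^{-2}$; either suffices since only the $L^2$-norm of the first component of $\mathcal{G}$ enters the semigroup estimate.
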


\begin{cor}\label{2Holdercontinuity}
	If  $T\in(0, T_0)$, $r\in\left(0, \frac{\kappa}{2C}\right)$ and given
	$\tilde{u}\in C^\alpha\left([0, T]; B_{H^2}(\tilde{u}_0, r)\right)$ with $\alpha\in(0, 1)$,
	then the mild solution of \eqref{2nd-SWE-1},
	$\left(\tilde{v}, \tilde{w}\right): [0, T]\to L^2(\Omega)\times H_0^1(\Omega)$,
	defined by the integral formulation \eqref{2ndmild-solu-form}, is locally H\"{o}lder continuous with exponent $\alpha$ with respect to $t\in[0, T]$, i.e.~for all $h\in(0, T]$,
	\be\label{2Holdercontinuityformular}
	\sup_{0\leq t<t+h\leq T}\left\|\begin{pmatrix}\tilde{v}(t+h)-\tilde{v}(t)\\ \tilde{w}(t+h)-\tilde{w}(t)\end{pmatrix}\right\|_{L^2(\Omega)\times H^1(\Omega)}\leq L_Uh^\alpha.
	\ee
	Here $L_U$ is a Lipschitz constant depending on $\alpha$,  $T_0$, $\Omega$,  $\beta_p$, $\beta_F$, $\kappa$, $\left\|\tilde{u}_0\right\|_{H^2(\Omega)}$,  $\left\|\tilde{w}_0\right\|_{H^{1}(\Omega)}$,\\ $\left\|\left(\tilde{v}_0, \tilde{w}_0\right)\right\|_{D(\mathcal{A})}$ and $M_0=  \sup_{t\in[0, \infty)}\left\|T(t)\right\|_{\mathcal{B}\left(L^2(\Omega)\times H_0^1(\Omega)\right)}$.
\end{cor}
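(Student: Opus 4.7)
The plan is to adapt the Lipschitz argument from Corollary \ref{2Lip-mild-solu}: the only place there that demanded $\tilde{u}\in C^1([0,T];L^2)$ was the estimate on $\|\tilde{u}(\sigma+h)-\tilde{u}(\sigma)\|_{L^2}$, and under $C^\alpha$-regularity of $\tilde{u}$ the same decomposition will deliver exactly $C^\alpha$-regularity of the mild solution. Writing $\Phi(t)=(\tilde{v}(t),\tilde{w}(t))$, $\Phi_0=(\tilde{v}_0,\tilde{w}_0)\in D(\mathcal{A})$ and $F(s)=([G(\tilde{w})](s)+\beta_p\tilde{u}(s),0)$, I begin from
\[
\Phi(t+h)-\Phi(t)=[T(t+h)-T(t)]\Phi_0+\int_0^{t+h}T(t+h-s)F(s)\,ds-\int_0^{t}T(t-s)F(s)\,ds.
\]
The semigroup term is handled by $[T(t+h)-T(t)]\Phi_0=\int_t^{t+h}T(r)\mathcal{A}\Phi_0\,dr$, which is bounded in $\mathscr{X}$ by $hM_0\|\Phi_0\|_{D(\mathcal{A})}\leq T_0^{1-\alpha}M_0\|\Phi_0\|_{D(\mathcal{A})}h^\alpha$.

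For the integral difference, note that $F$ does \emph{not} lie in $D(\mathcal{A})$, since the boundary values of $-\beta_F/(\tilde{w}+\theta_2)^2$ are generically nonzero, so direct semigroup smoothing is unavailable. Instead, a change of variables $\sigma=s-h$ in the first integral yields the identity
\[
\int_0^{t+h}T(t+h-s)F(s)\,ds-\int_0^{t}T(t-s)F(s)\,ds=\int_0^{t}T(t-\sigma)[F(\sigma+h)-F(\sigma)]\,d\sigma+\int_0^{h}T(t+h-\tau)F(\tau)\,d\tau .
\]
The last boundary integral is bounded by $M_0h\|F\|_{C([0,T];\mathscr{X})}$ with $\|F\|_{\infty}$ controlled by $\beta_F/\kappa^2$, $\beta_p|\theta_1-1|$ and $\|\tilde{u}_0\|_{H^2}+r$; since $h\leq T_0^{1-\alpha}h^\alpha$, this contribution is absorbed into the desired $h^\alpha$-bound.

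The heart of the argument is the first integral. Because $\tilde{w}(\sigma)\in B_{H^1}(\tilde{w}_0,r)$ and the one-dimensional embedding $H^1(\Omega)\hookrightarrow L^\infty(\Omega)$ with constant $C=C(\Omega)$ gives $\|\tilde{w}(\sigma)-\tilde{w}_0\|_{L^\infty}\leq Cr$, the standing choice $r<\kappa/(2C)$ enforces $\tilde{w}(\sigma)+\theta_2\geq \kappa/2$ pointwise, uniformly in $\sigma$. A pointwise expansion of $1/(w+\theta_2)^2$ then gives the Lipschitz estimate
\[
\|G(\tilde{w})(\sigma+h)-G(\tilde{w})(\sigma)\|_{L^2}\leq L_G\,\|\tilde{w}(\sigma+h)-\tilde{w}(\sigma)\|_{H^1},
\]
with $L_G$ depending only on $\beta_F$, $\kappa$ and $\Omega$. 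Combining with $\|\tilde{u}(\sigma+h)-\tilde{u}(\sigma)\|_{L^2}\leq [\tilde{u}]_{C^\alpha([0,T];H^2)}h^\alpha$ and setting $\eta(t,h)=\|\Phi(t+h)-\Phi(t)\|_{\mathscr{X}}$, I arrive at the integral inequality
\[
\eta(t,h)\leq C_1 h^\alpha+M_0L_G\int_0^{t}\eta(\sigma,h)\,d\sigma,\quad t\in[0,T-h],
\]
where $C_1$ collects all the Hölder-order contributions and depends on the quantities listed in the statement (and implicitly on $[\tilde{u}]_{C^\alpha}$). Gronwall's inequality now delivers $\eta(t,h)\leq C_1 e^{M_0 L_G T_0}h^\alpha$, which is \eqref{2Holdercontinuityformular}.

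The main obstacle I anticipate is the Lipschitz linearisation of $G(\tilde{w})=-\beta_F/(\tilde{w}+\theta_2)^2$: this requires a uniform pointwise lower bound on $\tilde{w}+\theta_2$, which in turn forces one to exploit the embedding $H^1\hookrightarrow L^\infty$ and the quantitative smallness condition $r<\kappa/(2C)$ on the ball around $\tilde{w}_0$. It is precisely here that the one-dimensional setting of the present paper is decisive; the same Gronwall closure would not be available in higher dimensions without the additional bi-harmonic regularisation used in the companion paper \cite{parabolicdispersive}.
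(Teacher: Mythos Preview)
Your argument is correct and follows essentially the same route as the paper: the identical three-term decomposition of $\Phi(t+h)-\Phi(t)$ (semigroup increment on $\Phi_0\in D(\mathcal{A})$, boundary integral over $[0,h]$, and the $F(\sigma+h)-F(\sigma)$ integral), the same $L^2$-Lipschitz bound on $G(\tilde{w})$ coming from the lower bound $\tilde{w}+\theta_2\geq\kappa/2$, and the same Gronwall closure. Your parenthetical remark that the constant implicitly retains a dependence on $[\tilde{u}]_{C^\alpha}$ is in fact more accurate than the paper's final step, which attempts to absorb $[\tilde{u}]_{C^\alpha}$ into $r+\|\tilde{u}_0\|_{H^2}$ without justification.
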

We finally conclude that the mild solution from Theorem \ref{2nd-solu-thm} is a strict solution:
\begin{thm}\label{2nd-mild-solution-cor}
	For $T\in(0, T_0)$ and given $\tilde{u}\in C\left([0, T]; B_{H^2}\left(\tilde{u}_0,  r\right)\right) \cap  C^1([0, T]; L^2(\Omega))$, the mild solution $\left(\tilde{v}, \tilde{w}\right)$ of  \eqref{2nd-SWE-1}, defined by  \eqref{2ndmild-solu-form}, is the strict solution of  \eqref{2nd-SWE-1} with
	 \[\left(\tilde{v}, \tilde{w}\right)\in C^1\left([0, T]; L^2(\Omega)\times H_0^1(\Omega)\right)\cap C\left([0, T]; H_0^1(\Omega)\times \left\{H^2(\Omega)\cap H_0^1(\Omega)\right\}\right).\]
\end{thm}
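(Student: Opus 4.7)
The plan is to apply Lemma \ref{IEE-S} with $\Phi_0=(\tilde v_0,\tilde w_0)\in D(\mathcal{A})$, which will promote the mild solution obtained in Theorem \ref{2nd-solu-thm} to a strict solution. Since $\mathscr{X}=L^2(\Omega)\times H_0^1(\Omega)$ and $D(\mathcal{A})=H_0^1(\Omega)\times(H^2(\Omega)\cap H_0^1(\Omega))$, the conclusion of Lemma \ref{IEE-S}, $\Phi\in C([0,T];D(\mathcal{A}))\cap C^1([0,T];\mathscr{X})$, is exactly the regularity claimed. So everything reduces to verifying that $\mathcal{G}(\Phi)\in C^1([0,T];\mathscr{X})$.

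Because the second component of $\mathcal{G}$ vanishes, this further reduces to
\[
g_1(t):=-\frac{\beta_F}{(\tilde w(t)+\theta_2)^2}+\beta_p(\theta_1-1+\tilde u(t))\in C^1([0,T];L^2(\Omega)).
\]
As $\beta_p\tilde u\in C^1([0,T];L^2)$ by hypothesis, the only nontrivial piece is $G(\tilde w)$. The first step is to establish that $\tilde w\in C^1([0,T];L^2(\Omega))$ with $\tilde w'=\tilde v$. This comes from differentiating the second component of \eqref{2ndmild-solu-form}: since the second row of $\mathcal{A}$ maps $(a_1,a_2)$ to $a_1$, and $\Phi_0\in D(\mathcal{A})$, the homogeneous piece $[T(t)\Phi_0]_2$ is $C^1$ in $L^2$ with derivative $[T(t)\Phi_0]_1$. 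For the Duhamel contribution I would use the structural identity $[T(\tau)(g_1(s),0)]_2=\int_0^\tau [T(\sigma)(g_1(s),0)]_1\,d\sigma$ (valid by approximating $g_1(s)\in L^2$ in $H_0^1$ and using strong continuity of $T$) and then Fubini to exchange orders of integration, after which the fundamental theorem of calculus gives its $t$-derivative as $\int_0^t [T(t-s)(g_1(s),0)]_1\,ds$, i.e.\ the first component of the mild solution.

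With $\tilde w'=\tilde v$ in hand, I would apply a chain-rule argument for $G$ in $L^2$. In one space dimension, $H^1(\Omega)\hookrightarrow L^\infty(\Omega)$, and the choice $r<\kappa/(2C)$ guarantees $\tilde w(t)+\theta_2\ge \kappa/2>0$ uniformly in $t$, so $(\tilde w+\theta_2)^{-3}\in C([0,T];L^\infty(\Omega))$. Expanding the $L^2$ difference quotient with the identity $a^{-2}-b^{-2}=-(a-b)(a+b)(ab)^{-2}$, the $L^2$-convergence $(\tilde w(t+h)-\tilde w(t))/h\to \tilde v(t)$ combines with $L^\infty$-convergence of the remaining factor (using Corollary \ref{2Lip-mild-solu} for uniform control) to yield
\[
\frac{d}{dt}G(\tilde w(t))=\frac{2\beta_F\,\tilde v(t)}{(\tilde w(t)+\theta_2)^3}\in C([0,T];L^2(\Omega)).
\]
Together with $\beta_p\tilde u'\in C([0,T];L^2)$ this gives $g_1\in C^1([0,T];L^2)$, and Lemma \ref{IEE-S} concludes.

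The main obstacle I expect is the first step, namely the identification $\tilde w'=\tilde v$ in $L^2(\Omega)$ directly from the mild formulation, since a priori $\tilde v\in C([0,T];L^2)$ and $\tilde w\in C([0,T];H_0^1)$ are known only separately and the forcing $(g_1(s),0)$ does not lie in $D(\mathcal{A})$ (so one cannot differentiate the Duhamel term naively). Once this differentiability relation is secured through the wave-equation structure of $\mathcal{A}$, the chain-rule and continuity arguments are routine given the one-dimensional Sobolev embedding and the uniform positivity of $\tilde w+\theta_2$.
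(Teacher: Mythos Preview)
Your approach is correct and takes a genuinely different, more economical route than the paper's proof.

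The paper does not begin by extracting $\tilde w'=\tilde v$ from the mild formula. Instead it writes down an auxiliary integral equation \eqref{2ndLNP} for a candidate derivative $(\tilde p,\tilde q)$, solves it by a contraction argument on $C([0,T];\mathscr{X})$, and then proves \emph{by hand} that the full difference quotient $h^{-1}\big((\tilde v(t+h),\tilde w(t+h))-(\tilde v(t),\tilde w(t))\big)$ converges to $(\tilde p(t),\tilde q(t))$ in $\mathscr{X}$. This last step is the bulk of the work: the difference $E(h,t)$ is split into four pieces $E^{(1)},\dots,E^{(3)}_3$, each estimated separately (using Corollary~\ref{2Lip-mild-solu} and the uniform continuity \eqref{2nduniformly-continuous-Fre-G} of $G'$), and a Gronwall inequality closes the loop. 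Only after this is established does the paper observe that $\mathcal{G}(\Phi)\in C^1([0,T];\mathscr{X})$ and invoke Lemma~\ref{IEE-S}.

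Your argument bypasses the auxiliary fixed point and the Gronwall step entirely by exploiting the specific wave-equation structure of $\mathcal{A}$: the second row of $\mathcal{A}$ is $(1,0)$ and the forcing has zero second component, which together force $\tilde w'=\tilde v$ already at the mild level. Two remarks: (i) your ``main obstacle'' is milder than you feared, since the identity $[T(\tau)(g,0)]_2=\int_0^\tau [T(\sigma)(g,0)]_1\,d\sigma$ follows for every $g\in L^2(\Omega)$ directly from the standard semigroup fact $\mathcal{A}\!\int_0^\tau T(\sigma)\phi\,d\sigma = T(\tau)\phi-\phi$ (take second components), no density argument needed; (ii) your invocation of Corollary~\ref{2Lip-mild-solu} is harmless but unnecessary, since continuity of $\tilde w$ in $H^1\hookrightarrow L^\infty$ already gives the $L^\infty$-convergence of the algebraic factor. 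What your route buys is brevity and transparency; what the paper's route buys is a scheme that does not rely on the second row of $\mathcal{A}$ being so simple, and that yields $\Phi\in C^1([0,T];\mathscr{X})$ in one stroke rather than first only $\tilde w\in C^1([0,T];L^2)$.
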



\section{Properties of the Solution Operator to the Hyperbolic Equation}\label{SolnOp}


\begin{thm}\label{2nd-cpl}
	Let $T_0$ be given by Theorem \ref{2nd-solu-thm} and $T\in (0, T_0)$. Then a solution operator $W_1$,
	\[W_1: C\left([0, T]; B_{H^2}\left(\tilde{u}_0, r\right)\right)\to Z(T),\quad \tilde{u}\mapsto  W_1(\tilde{u})=\left(\tilde{v}, \tilde{w}\right)=\left(\tilde{v}(\tilde{u}), \tilde{w}(\tilde{u})\right) ,
\]
	where	
\[ [W_1(\tilde{u})](t)=  T(t)\begin{pmatrix}\tilde{v}_0\\  \tilde{w}_0\end{pmatrix}+
		\int_0^t\left\{T(t-s)
		\begin{pmatrix}
			[G(\tilde{w})](s)+\beta_p\tilde{u}(s)\\
			0
		\end{pmatrix}
		\right\}ds,\quad t\in[0, T],\]
	satisfies  Lipschitz continuity, i.e.
	\be\label{2Lip-wrt-u}
	 \sup_{t\in[0, T]}\left\|[W_1(\tilde{u}_1)](t)-[W_1(\tilde{u}_2)](t)\right\|_{L^{2}(\Omega)\times H^{1}(\Omega)}\leq L_{W}\sup_{t\in[0, T]}\|\tilde{u}_1(t)-\tilde{u}_2(t)\|_{H^2(\Omega)}.
	\ee
	Here $r\in\left(0, \frac{\kappa}{2C}\right)$;  $Z(T)$ and $G(\tilde{w})$ are defined by  \eqref{2ini-NBD} and \eqref{2ndG-def}  respectively; $L_{W}$ is a Lipschitz constant depending on $T_0$, $M_0=  \sup_{t\in[0,\infty)}\left\|T(t)\right\|_{\mathcal{B}\left(L^2(\Omega)\times H_0^1(\Omega)\right)}$, $\kappa$, $\|w_0\|_{H^{1}(\Omega)}$, $\Omega$,  $\beta_p$ and $\beta_F$.\\	
	Furthermore, define
	\[  W_2: C\left([0, T]; B_{H^2}\left(\tilde{u}_0,  r\right)\right) \to  C\left([0,T]; L^2(\Omega)\right),\quad \tilde{u}\mapsto \frac{\tilde{v}}{\tilde{w}+\theta_2}.\]
	Then $W_2(\tilde{u})$ also depends Lipschitz-continuously on $\tilde{u}\in C\left([0, T]; B_{H^2}\left(\tilde{u}_0, r\right)\right)$, i.e.
	\be\label{2Lip-frac-W}
	\sup_{t\in[0,T]}\left\|[W_2(\tilde{u}_1)](t)-[W_2(\tilde{u}_2)](t)\right\|_{L^{2}(\Omega)}\leq L_{W_2}\sup_{t\in[0,T]}\|\tilde{u}_1(t)-\tilde{u}_2(t)\|_{H^2(\Omega)}.
	\ee
	Here $L_{W_2}$ is a Lipschitz constant depending on the above $L_{W}$ and $\left\|\tilde{v}_0\right\|_{L^2(\Omega)}$.
\end{thm}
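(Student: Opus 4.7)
The plan is to prove both Lipschitz estimates by a direct comparison argument, exploiting the integral formulation and the fact that $W_1$ involves only the scalar nonlinearity $G$, which is smooth on the range of admissible $\tilde{w}$. For the first estimate, I take two inputs $\tilde{u}_1, \tilde{u}_2 \in C([0,T]; B_{H^2}(\tilde{u}_0,r))$, produce the corresponding mild solutions $(\tilde{v}_i, \tilde{w}_i) = W_1(\tilde{u}_i)$ using Theorem \ref{2nd-solu-thm}, and subtract their integral representations. The initial-data terms $T(t)(\tilde{v}_0, \tilde{w}_0)^\top$ cancel, leaving
\[
[W_1(\tilde{u}_1)](t) - [W_1(\tilde{u}_2)](t) = \int_0^t T(t-s)\begin{pmatrix} [G(\tilde{w}_1)](s) - [G(\tilde{w}_2)](s) + \beta_p\bigl(\tilde{u}_1(s) - \tilde{u}_2(s)\bigr) \\ 0 \end{pmatrix} ds .
\]
Bounding the semigroup by $M_0$ yields two contributions: the forcing term $\beta_p(\tilde{u}_1 - \tilde{u}_2)$, which is immediately controlled by $M_0 T \beta_p \sup_t \|\tilde{u}_1(t) - \tilde{u}_2(t)\|_{H^2(\Omega)}$, and the nonlinear term $G(\tilde{w}_1) - G(\tilde{w}_2)$.

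The Lipschitz estimate for $G$ is the next key step. Since both $\tilde{w}_i + \theta_2$ lie in a $C([0,T]; H^1)$-ball of radius $r < \kappa/(2C)$ around $\tilde{w}_0$, the one-dimensional Sobolev embedding $H^1(\Omega) \hookrightarrow L^\infty(\Omega)$ gives a uniform lower bound $\tilde{w}_i(s) + \theta_2 \geq \kappa/2$, so the map $\varphi \mapsto -\beta_F/(\varphi + \theta_2)^2$ is uniformly Lipschitz from the ball into $L^2(\Omega)$, with Lipschitz constant $L_G$ depending only on $\beta_F$, $\kappa$, $\Omega$ and $\|\tilde{w}_0\|_{H^1(\Omega)}$. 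Substituting this bound produces the closed inequality
\[
\Psi(t) := \bigl\|[W_1(\tilde{u}_1)](t) - [W_1(\tilde{u}_2)](t)\bigr\|_{\mathscr{X}} \leq M_0 \beta_p T \sup_{s\in[0,T]} \|\tilde{u}_1(s) - \tilde{u}_2(s)\|_{H^2(\Omega)} + M_0 L_G \int_0^t \Psi(s)\, ds ,
\]
and Gr\"{o}nwall's inequality yields $\Psi(t) \leq M_0 \beta_p T e^{M_0 L_G T}\sup_s \|\tilde{u}_1(s) - \tilde{u}_2(s)\|_{H^2(\Omega)}$, which is the desired estimate \eqref{2Lip-wrt-u} with $L_W := M_0\beta_p T_0 e^{M_0 L_G T_0}$.

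For the second statement, I use the algebraic identity
\[
\frac{\tilde{v}_1}{\tilde{w}_1 + \theta_2} - \frac{\tilde{v}_2}{\tilde{w}_2 + \theta_2} = \frac{\tilde{v}_1 - \tilde{v}_2}{\tilde{w}_1 + \theta_2} + \frac{\tilde{v}_2 (\tilde{w}_2 - \tilde{w}_1)}{(\tilde{w}_1 + \theta_2)(\tilde{w}_2 + \theta_2)} .
\]
The first term is bounded in $L^2$ by $(2/\kappa)\|\tilde{v}_1 - \tilde{v}_2\|_{L^2(\Omega)}$ using the uniform positivity $\tilde{w}_i + \theta_2 \geq \kappa/2$. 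For the second, one-dimensional Sobolev embedding controls $\|\tilde{w}_2 - \tilde{w}_1\|_{L^\infty(\Omega)} \lesssim \|\tilde{w}_2 - \tilde{w}_1\|_{H^1(\Omega)}$, while $\|\tilde{v}_2\|_{L^2(\Omega)} \leq \|\tilde{v}_0\|_{L^2(\Omega)} + r$ since $\tilde{v}_2$ lies in the ball of $Z(T)$. Combining these two pointwise-in-$t$ estimates with \eqref{2Lip-wrt-u} gives \eqref{2Lip-frac-W} with $L_{W_2}$ expressed in terms of $L_W$, $\|\tilde{v}_0\|_{L^2(\Omega)}$, $r$, $\kappa$ and the embedding constant.

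The main obstacle is the Lipschitz control of the nonlinearity $G$ and the implicit dependence of $(\tilde{v}, \tilde{w})$ on itself through the integral equation; both are resolved by the uniform lower bound on $\tilde{w} + \theta_2$, which is guaranteed by the choice $r < \kappa/(2C)$ together with the $H^1 \hookrightarrow L^\infty$ embedding, and by Gr\"{o}nwall's inequality. The remaining work is bookkeeping of the constants to verify the claimed dependence of $L_W$ and $L_{W_2}$ on the data.
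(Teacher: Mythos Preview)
Your proof is correct and follows essentially the same route as the paper: subtract the integral representations, bound the semigroup by $M_0$, use the Lipschitz estimate for $G$ (guaranteed by the uniform lower bound $\tilde{w}_i+\theta_2\geq\kappa/2$), close with Gr\"onwall to obtain $L_W=T_0M_0\beta_p e^{M_0L_GT_0}$, and then handle $W_2$ by the same algebraic splitting and Sobolev embedding. The only cosmetic difference is that the paper invokes the $H^1$-Lipschitz bound for $G$ from its Lemma~\ref{2ndLip-G-Lem} and then passes to $L^2$, whereas you argue the $L^2$-Lipschitz bound directly; both yield the same constants and dependencies.
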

\begin{proof}
	Let $T\in(0, T_0)$, $\tilde{u}_1$, $\tilde{u}_2\in C\left([0, T]; B_{H^2}\left(\tilde{u}_0, r\right)\right)$ be such that $W_1(\tilde{u}_1)=\left(\tilde{v}_1, \tilde{w}_1\right)\in Z(T)$ and $W_1(\tilde{u}_2)=\left(\tilde{v}_2, \tilde{w}_2\right)\in Z(T)$, then
	$[G(\tilde{w}_1)](t)-[G(\tilde{w}_2)](t)+\beta_p\left[\tilde{u}_1(t)-\tilde{u}_2(t)\right]\in H^1(\Omega)$ for all $ t\in[0, T]$,
	and it follows that
	\begin{align}
		&\|[W_1(\tilde{u}_1)](t)-[W_1(\tilde{u}_2)](t)\|_{L^2(\Omega)\times H^{1}(\Omega)}\notag\\
		=&\left\|\int_0^t  T(t-s)\begin{pmatrix}[G(\tilde{u}_1,\tilde{w}_1)](s)-[G(\tilde{u}_2,\tilde{w}_2)](s)\\ 0\end{pmatrix}ds\right\|_{L^2(\Omega)\times H^{1}(\Omega)}\notag\\
		\leq&M_0\int_0^t\left\|[G(\tilde{w}_1)](s)-[G(\tilde{w}_2)](s)+\beta_p\left[\tilde{u}_1(s)-\tilde{u}_2(s)\right]\right\|_{L^{2}(\Omega)}ds\notag\\
		\leq&M_0\int_0^t\left\|[G(\tilde{w}_1)](s)-[G(\tilde{w}_2)](s)+\beta_p\left[\tilde{u}_1(s)-\tilde{u}_2(s)\right]\right\|_{H^{1}(\Omega)}ds,\label{2W estimate-11}
	\end{align}
	where $M_0=  \sup_{t\in[0,\infty)}\left\|T(t)\right\|_{\mathcal{B}\left(L^2(\Omega)\times H_0^1(\Omega)\right)}$. Since the estimate \eqref{2ndLip-G} of $G$ from Lemma \ref{2ndLip-G-Lem} and
	\[\|\tilde{w}_1(t)-\tilde{w}_2(t)\|_{H^{1}(\Omega)}\leq\left\|\begin{pmatrix}\tilde{v}_1(t)-\tilde{v}_2(t)\\ \tilde{w}_1(t)-\tilde{w}_2(t)\end{pmatrix}\right\|_{L^{2}(\Omega)\times H^{1}(\Omega)}=\|[W_1(\tilde{u}_1)](t)-[W_1(\tilde{u}_2)](t)\|_{L^{2}(\Omega)\times H^{1}(\Omega)},	\]
	where $t\in [0, T]$,    we find
	\begin{align}
		&\left\|[G(\tilde{w}_1)](s)-[G(\tilde{w}_2)](s)+\beta_p\left[\tilde{u}_1(s)-\tilde{u}_2(s)\right]\right\|_{H^{1}(\Omega)}\notag\\
		\leq& L_G \|[W_1(\tilde{u}_1)](s)-[W_1(\tilde{u}_2)](s)\|_{L^{2}(\Omega)\times H^{1}(\Omega)}+\beta_p\left\|\tilde{u}_1(s)-\tilde{u}_2(s)\right\|_{H^2(\Omega)}\notag,\ \forall\ 0\leq s\leq t\leq T.
	\end{align}
	Thus
	\begin{align}
		&\|[W_1(\tilde{u}_1)](t)-[W_1(\tilde{u}_2)](t)\|_{L^2(\Omega)\times H^{1}(\Omega)}\notag\\
		\leq&T_0M_0\beta_p\sup_{t\in[0, T]}\left\|\tilde{u}_1(t)-\tilde{u}_2(t)\right\|_{H^2(\Omega)}+M_0L_G \int_0^t\|[W_1(\tilde{u}_1)](s)-[W_1(\tilde{u}_2)](s)\|_{L^{2}(\Omega)\times H^{1}(\Omega)}ds\notag.
	\end{align}
	According to Gronwall's inequality, we obtain
	\[\sup_{t\in[0, T]}\|\left[W_1(\tilde{u}_1)\right](t)-\left[W_1(\tilde{u}_2)\right](t)\|_{L^{2}(\Omega)\times H^{1}(\Omega)}\leq  T_0M_0\beta_pe^{M_0L_GT_0}\sup_{t\in[0, T]}\|\tilde{u}_1(t)-\tilde{u}_2(t)\|_{H^2(\Omega)}.\]
	As a result we conclude \eqref{2Lip-wrt-u} with $L_{W}=T_0M_0\beta_pe^{M_0L_GT_0}$. Because $L_G$, as a Lipschitz constant, depends on $\kappa$, $\|w_0\|_{H^{1}(\Omega)}$, $\Omega$ and the coefficient $\beta_F$,   $L_W$ depends on $T_0$, $M_0$, $\kappa$, $\|w_0\|_{H^{1}(\Omega)}$,  $\Omega$ and the coefficients $\beta_p$ and $\beta_F$,  that is
	\[L_W=L_W\left(T_0,\ M_0,\ \kappa,\ \|w_0\|_{H^{1}(\Omega)},\ \Omega,\  \beta_p,\ \beta_F\right).\]
	From the conclusion \eqref{2ndw-lower-bound} from Lemma \ref{2ndestimates}, we get that there exists a constant $C=C(\Omega)>0$, such that for all $r\in\left(0, \frac{\kappa}{2C}\right)$,
	\[ {\tilde{w}_1(t)+\theta_2}\geq \frac{\kappa}{2},\quad {\tilde{w}_2(t)+\theta_2}\geq \frac{\kappa}{2},\]
 for all $t\in[0, T]$. Then for the above constant $C=C(\Omega)$ and all $t\in [0, T]$, it follows that
	\[\left\|\frac{\tilde{v}_1(t)-\tilde{v}_2(t)}{\tilde{w}_1(t)+\theta_2}\right\|_{L^2(\Omega)}=\left[\int_\Omega\left|\frac{\tilde{v}_1(t)-\tilde{v}_2(t)}{\tilde{w}_1(t)+\theta_2}\right|^2dx\right]^{\frac{1}{2}}
	\leq \frac{2}{\kappa}\left\|\tilde{v}_1(t)-\tilde{v}_2(t)\right\|_{L^2(\Omega)},\]
	and thus 
	\begin{align}
		\left\|\frac{\tilde{v}_2(t)}{\tilde{w}_1(t)+\theta_2}-\frac{\tilde{v}_2(t)}{\tilde{w}_2(t)+\theta_2}\right\|^2_{L^2(\Omega)}&=\int_\Omega\left| \tilde{v}_2(t)\right|^2\left|\frac{1}{\tilde{w}_1(t)+\theta_2}-\frac{1}{\tilde{w}_2(t)+\theta_2}\right|^2dx\notag\\
		&=\int_\Omega\left| \tilde{v}_2(t)\right|^2\frac{\left|\tilde{w}_1(t)-\tilde{w}_2(t)\right|^2}{\left|\tilde{w}_1(t)+\theta_2\right|^2\left|\tilde{w}_2(t)+\theta_2\right|^2}dx\notag\\
		&\leq\frac{2^4}{\kappa^4}\int_\Omega\left| \tilde{v}_2(t)\right|^2\left|\tilde{w}_1(t)-\tilde{w}_2(t)\right|^2dx\notag\\
		&\leq\frac{2^4}{\kappa^4}\left\|\tilde{w}_1(t)-\tilde{w}_2(t)\right\|_{L^\infty(\Omega)}^2\int_\Omega\left| \tilde{v}_2(t)\right|^2dx\notag\\
		&\leq\frac{2^4C^2}{\kappa^4}\left\|\tilde{w}_1(t)-\tilde{w}_2(t)\right\|_{H^1(\Omega)}^2\int_\Omega\left| \tilde{v}_2(t)\right|^2dx\notag.
	\end{align}
We conclude that
	\begin{align}
		&\left\|[W_2(\tilde{u}_1)](t)-[W_2(\tilde{u}_2)](t)\right\|_{L^2(\Omega))}\notag\\
		\leq&\left\|\frac{\tilde{v}_1(t)-\tilde{v}_2(t)}{\tilde{w}_1(t)+\theta_2}\right\|_{L^2(\Omega)}+\left\|\frac{\tilde{v}_2(t)}{\tilde{w}_1(t)+\theta_2}-\frac{\tilde{v}_2(t)}{\tilde{w}_2(t)+\theta_2}\right\|_{L^2(\Omega)}\notag\\
		\leq&\frac{2}{\kappa}\left\|\tilde{v}_1(t)-\tilde{v}_2(t)\right\|_{L^2(\Omega)}+\frac{4C}{\kappa^2}\left\|\tilde{w}_1(t)-\tilde{w}_2(t)\right\|_{H^1(\Omega)}\left\| \tilde{v}_2(t)\right\|_{L^2(\Omega)}\notag\\
		\leq&\frac{2}{\kappa}\left\|\tilde{v}_1(t)-\tilde{v}_2(t)\right\|_{L^2(\Omega)}+\frac{4C}{\kappa^2}\left\|\tilde{w}_1(t)-\tilde{w}_2(t)\right\|_{H^1(\Omega)}\left(\left\| \tilde{v}_0\right\|_{L^2(\Omega)}+r\right)\notag\\
		\leq&\frac{2}{\kappa}\left\|\tilde{v}_1(t)-\tilde{v}_2(t)\right\|_{L^2(\Omega)}+\frac{4C}{\kappa^2}\left\|\tilde{w}_1(t)-\tilde{w}_2(t)\right\|_{H^1(\Omega)}\left(\left\| \tilde{v}_0\right\|_{L^2(\Omega)}+\frac{\kappa}{2C}\right)\label{2final estimate-12}.
	\end{align}
	Setting
	$L_{W_2}=L_{W}\cdot\max\left\{{2}{\kappa}^{-1},\ {4C}{\kappa^{-2}}\left(\left\| \tilde{v}_0\right\|_{L^2(\Omega)}+{\kappa}(2C)^{-1}\right)\right\}$,
	where $L_{W_2}$ depends on the above $L_{W}$ and  $\left\|\tilde{v}_0\right\|_{L^2(\Omega)}$, and using the estimates \eqref{2Lip-wrt-u} and \eqref{2final estimate-12},  \eqref{2Lip-frac-W} is obtained.
\end{proof}
\subsection*{Analysis of Fr\'{e}chet derivative}
Recall that $u=\tilde{u}+\theta_1$, $v=\tilde{v}$, $w=\tilde{w}+\theta_2$. For $T\in(0, T_0)$ a solution operator to the integral formulation \eqref{2ndmild-solu-form} is given by
\bse\label{2def-W}
\be\label{2def-W-01}
\begin{split}
	 W: C\left([0, T]; B_{H^2}(u_0,  r)\right)&\to C\left([0, T]; B_{L^2}(v_0,  r)\times  B_{H^1}(w_0,  r)\right),\\
	 u&\mapsto W(u)=\left(v, w\right)=\left(v(u), w(u)\right),
\end{split}
\ee
where
\be\label{2def-W-02}
 \left[W(u)\right](t)=\begin{pmatrix}0\\ \theta_2\end{pmatrix}+  T(t)\begin{pmatrix}v_0\\ w_0-\theta_2\end{pmatrix}+
\int_0^t\left\{T(t-s)\begin{pmatrix}[G(w-\theta_2)](s)+\beta_p(u(s)-\theta_1)\\ 0 \end{pmatrix}\right\}ds.
\ee
\ese
We recall that $W$ satisfies the Lipschitz estimate
\begin{align}
	 \sup_{t\in[0, T]}\left\|[W({u}_1)](t)-[W({u}_2)](t)\right\|_{L^{2}(\Omega)\times H^{1}(\Omega)}\leq L_{W}\sup_{t\in[0, T]}\|{u}_1(t)-{u}_2(t)\|_{H^2(\Omega)},\label{2Lip-W-I}
\end{align}
where $L_{W}$ is a Lipschitz constant depending on $T_0,\ M_0,\ \kappa,\ \|w_0\|_{H^{2}(\Omega)},\  \Omega,\  \beta_p,\ \beta_F$.

Let $h\in\mathds{R}$ be small such that  $u+hq\in C\left([0, T]; B_{H^2}(u_0, r)\right)$ for all $q\in C([0, T_0); H^2(\Omega)\cap H_0^1(\Omega))$,  then from the definition of the Fr\'{e}chet derivative ${W}{'}(u)$ of ${W}(u)$ on $u$,
\be\label{2Fre-W}
{W}{'}(u)q= \lim_{h\rightarrow 0}\frac{1}{h}\left[{W}({u}+hq)-{W}({u})\right],
\ee
${W}{'}(u)$ is an operator defined by
\bse\label{2Fre-W-1}
\be\label{2Fre-W-1-1}
{W}{'}(u): C([0,T]; H^2(\Omega)\cap H_0^1(\Omega))\to C([0,T];L^2(\Omega)\times H_0^1(\Omega)),
\ee
with
\be\label{2Fre-W-1-2}
 q\mapsto {W}{'}(u)q=\left(v'(u)q, w'(u)q\right).\ee
\ese
Inequality \eqref{2Lip-W-I} implies the Fr\'{e}chet derivative $W{'}(u)$ of $W(u)$ on $u$ uniformly exists and
\bse\label{2bound-Fre-W}
\be\label{2bound-Fre-W-01}
\sup_{t\in[0, T]}\left\|[W'(u)q](t)\right\|_{L^{2}(\Omega)\times H^{1}(\Omega)}\leq L_W\sup_{t\in[0, T]}\left\|q(t)\right\|_{H^2(\Omega)},
\ee
\be\label{2bound-Fre-W-02}
\sup_{t\in[0, T]}\left\|[W'(u)q](t)\right\|_{L^{2}(\Omega)\times H^{1}(\Omega)}\leq L_W,\quad  \forall\ q\in C\left([0, T]; B_{H^2}(0,1)\right).
\ee
\ese
\begin{cor}\label{2Lip-Frechet-W}
	Fix $T\in(0, T_0)$. For any $q\in C\left([0, T]; B_{H^2}(0,1)\right)$ and $u_1$, $u_2\in  C\left([0, T]; B_{H^2}(u_0, r)\right)$,  the Fr\'{e}chet derivative $W'(u)$ of $W(u)$ satisfies
	\be\label{2Lip-Frechet-W-I}
	\sup_{t\in[0, T]}\left\|[{W}'(u_1)q](t)-[{W}'(u_2)q](t)\right\|_{L^{2}(\Omega)\times H^{1}(\Omega)}\leq L_{F} \sup_{t\in[0, T]}\left\|{u}_1(t)-{u}_2(t)\right\|_ {H^2(\Omega)},
	\ee
	 where $L_F$ is a Lipschitz constant depending on $T_0$, $\Omega$, $\beta_p$, $\beta_F$, $M_0$, $\kappa$, $\|w_0\|_{H^{1}(\Omega)}$ and $\|v_0\|_{L^2(\Omega)}$.
\end{cor}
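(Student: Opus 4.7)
}
The plan is to derive an integral equation for $W'(u)q$ by formally differentiating the integral formulation \eqref{2def-W-02} with respect to $u$ in the direction $q$, then to compare the resulting expressions at $u_1$ and $u_2$ and close up with Gronwall's inequality. Writing $w_i=w(u_i)$ and using that only the first component of the inhomogeneity in \eqref{2def-W-02} depends on $u$, one obtains
\[
[W'(u)q](t) \;=\; \int_0^t T(t-s) \begin{pmatrix} \dfrac{2\beta_F}{[w(u)(s)]^3}\,[w'(u)q](s) + \beta_p\, q(s) \\[2pt] 0 \end{pmatrix} ds ,
\]
so that the $\beta_p q$ term cancels in the difference $[W'(u_1)q](t) - [W'(u_2)q](t)$, which then involves a single nonlinear contribution that I split telescopically as
\[
\frac{2\beta_F}{w_1^3}[w'(u_1)q]-\frac{2\beta_F}{w_2^3}[w'(u_2)q] \;=\; 2\beta_F\!\left(\tfrac{1}{w_1^3}-\tfrac{1}{w_2^3}\right)[w'(u_1)q] \;+\; \frac{2\beta_F}{w_2^3}\bigl([w'(u_1)q]-[w'(u_2)q]\bigr).
\]

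For the first piece, the factorisation $a^{-3}-b^{-3}=(b-a)(a^2+ab+b^2)/(a^3b^3)$ together with the uniform lower bound $w_i\ge \kappa/2$ (Lemma \ref{2ndestimates}) and the one-dimensional Sobolev embedding $H^1(\Omega)\hookrightarrow L^\infty(\Omega)$ yield $\|1/w_1^3-1/w_2^3\|_{L^\infty}\le C\,\|w_1(s)-w_2(s)\|_{H^1}$, and the Lipschitz estimate \eqref{2Lip-W-I} then bounds this by $C L_W\sup_t\|u_1-u_2\|_{H^2}$. Combining with the $L^2$-bound $\|w'(u_1)q\|_{L^2}\le\|W'(u_1)q\|_{L^2\times H^1}\le L_W$ coming from \eqref{2bound-Fre-W-02} (since $\|q\|_{H^2}\le 1$) controls the first piece. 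For the second piece, $\|1/w_2^3\|_{L^\infty}\le (2/\kappa)^3$ gives a bound by a constant times $\|[W'(u_1)q](s)-[W'(u_2)q](s)\|_{L^2\times H^1}$. Applying $T(t-s)$, using $M_0$-boundedness of the semigroup and integrating from $0$ to $t\le T_0$, I arrive at
\[
\phi(t) \;\le\; M_0 C_1 L_W^2\, T_0 \sup_{s\in[0,T]}\|u_1(s)-u_2(s)\|_{H^2} \;+\; M_0 C_2 \int_0^t \phi(s)\,ds ,
\]
with $\phi(t):=\|[W'(u_1)q](t)-[W'(u_2)q](t)\|_{L^2\times H^1}$, and Gronwall's inequality closes the argument with $L_F = M_0 C_1 L_W^2 T_0\, e^{M_0 C_2 T_0}$, tracking the dependence on the claimed constants.

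The main technical point is justifying the formal differentiation that produces the integral equation for $W'(u)q$ and the integrability/measurability of the resulting integrand; here the uniform lower bound on $w$, the uniform $H^1$-bound on $w'(u)q$ from \eqref{2bound-Fre-W-02}, and the Lipschitz continuity \eqref{2Lip-W-I} of $u\mapsto w(u)$ in $C([0,T];H^1)$ (combined with the algebra property of $H^1(\Omega)$ in one dimension) make this routine; the remainder of the argument is a standard contraction/Gronwall closure entirely parallel to the proof of Theorem \ref{2nd-cpl}.
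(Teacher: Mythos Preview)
Your proposal is correct and follows essentially the same approach as the paper: derive the integral equation for $W'(u)q$ from \eqref{2def-W-02}, telescopically split the nonlinear difference $\frac{2\beta_F}{w_1^3}[w'(u_1)q]-\frac{2\beta_F}{w_2^3}[w'(u_2)q]$, and close with Gronwall's inequality. The only cosmetic differences are that the paper treats the $w'$- and $v'$-components separately via $T_{21}$ and $T_{11}$ (and estimates the integrand in $H^1$ using the algebra property, cf.\ \eqref{2ndC-a}, \eqref{2ndC-d}), whereas you work directly with the full vector and the $L^\infty$-bound via Sobolev embedding; both routes lead to the same Gronwall inequality and the same $L_F$ up to constants.
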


\begin{proof}
	According to Lemma \ref{2ndgenerator}, the linear operator $\mathcal{A}$ generates a $C_0$-semigroup
	\[\left\{T(t)=\begin{pmatrix}T_{11}(t) & T_{12}(t)\\ T_{21}(t) & T_{22}(t) \end{pmatrix}\in \mathcal{B}\left(L^2(\Omega)\times H_0^1(\Omega)\right):\quad t\in[0,\infty)\right\}.\]
	As $[G(w-\theta_2)](s)+\beta_p(u(s)-\theta_1)=-\beta_F[w(s)]^{-2}+\beta_p(u(s)-1)$, the integral formulation \eqref{2def-W-02}  implies the second component $w$ of vector function $W(u)=\left(v, w\right)=\left(v(u), w(u)\right)$ can be given by
	\begin{align}
		w(t)=&\left[w(u)\right](t)\notag\\
		=&\theta_2+T_{21}(t)v_0+T_{22}(t)\left(w_0-\theta_2\right)+ \int_0^tT_{21}(t-s)\left(\beta_p(u(s)-1)-\frac{\beta_F}{[w(u)]^2(s)}\right)ds.\notag
	\end{align}
	Using  this equality and the definitions \eqref{2Fre-W}-\eqref{2Fre-W-1} of the Fr\'{e}chet derivative $W'(u)=\left(v'(u), w'(u)\right)$, the Fr\'{e}chet derivative $w'(u)$ of $w(u)$ on $u$,  the second component of the Fr\'{e}chet derivative $W'(u)$, satisfies
	\bse\label{2Fre-w}
	\be\label{2Fre-w1}
	w'(u): C\left([0, T]; H^2(\Omega)\cap H_0^1(\Omega)\right)\to C\left([0, T]; H^2(\Omega)\cap H_0^1(\Omega)\right),
	\ee
	with
	\be\label{2Fre-w2}
	[w'(u)q](t)= \int_0^t T_{21}(t-s)\left\{\beta_p q(s)+2\beta_F\frac{[w'(u)q](s)}{[w(u)]^3(s)}\right\}ds.
	\ee
	\ese
	We shall show that there exists a positive Lipschitz constant $L_{F_2}$, which depends on $T_0$,  $M_0$, $\kappa$, $\|w_0\|_{H^1(\Omega)}$, $\Omega$, $\beta_p$, and $\beta_F$, such that
	\be\label{2Lip-Fre-w}
	\sup_{t\in[0, T]}\|[w'(u_1)q](t)-[w'(u_2)q](t)\|_{H^1(\Omega)}\leq L_{F_2}\sup_{t\in[0, T]}\|{u}_1(t)-{u}_2(t)\|_{H^2(\Omega)}.
	\ee
	We let $u_1$, $u_2\in C\left([0, T]; B_{H^2}(u_0,  r)\right)$ be such that the functions  $(v_1, w_1)=(v(u_1), w(u_1))$ and $(v_2, w_2)=(v(u_2), w(u_2))$ belong to $C\left([0, T]; B_{L^2}(v_0,  r)\times B_{H^1}(w_0,  r)\right)$.
    From the definitions \eqref{2Fre-W} and \eqref{2Fre-W-1} of Fr\'{e}chet derivative $W^{'}(u)$,
	\[W'(u_1)q=\left(v'(u_1)q, w'(u_1)q\right)\in C\left([0, T]; L^2(\Omega)\times H_0^1(\Omega)\right),\]
	\[W'(u_2)q=\left(v'(u_2)q, w'(u_2)q\right)\in C\left([0, T]; L^2(\Omega)\times H_0^1(\Omega)\right).\]
	Hence, by using 
	\[w_I(t)=[w'(u_1)q](t)\in H_0^1(\Omega),\quad w_J(t)=[w'(u_2)q](t)\in H_0^1(\Omega),\quad\forall\ t\in [0, T],\]
	one obtains
	\[\frac{[w'(u_1)q](t)}{[w(u_1)]^3(t)}-\frac{[w'(u_2)q](t)}{[w(u_2)]^3(t)}=\frac{w_I(t)}{[w_1(t)]^3}-\frac{w_J(t)}{[w_2(t)]^3}\in H_0^1(\Omega),\quad \forall\ t\in [0, T].\]
	Due to the inequality \eqref{2bound-Fre-W},
	\[\sup_{t\in[0, T]}\left\|w_I(t)\right\|_{H^1(\Omega)}=\sup_{t\in[0, T]}\left\|[w'(u_1)q](t)\right\|_{H^1(\Omega)}\leq L_W.\]
	From the Lipschitz continuity estimate \eqref{2Lip-W-I},  one can have 
	\begin{align}
		\left\|w_1(t)-w_2(t)\right\|_{H^1(\Omega)}=\left\|[w(u_1)](t)-[w(u_2)](t)\right\|_{H^1(\Omega)}
		\leq L_W\left\|u_1(t)-u_2(t)\right\|_{H^2(\Omega)}.\label{2Lip-w-II}
	\end{align}
	Using the algebraic property of $H^1(\Omega)$, i.e.~Lemma \ref{alg}, the estimates \eqref{2ndC-a}, \eqref{2ndC-d} from Lemma \ref{2ndestimates}, and  \eqref{2Lip-w-II}, it follows that
	\begin{align}
		\left\|\frac{w_I(t)}{[w_1(t)]^3}-\frac{w_J(t)}{[w_2(t)]^3}\right\|_{H^1(\Omega)}\leq&\left\|w_I(t)\right\|_{H^1(\Omega)}\cdot \left\|\frac{1}{[w_1(t)]^3}-\frac{1}{[w_2(t)]^3}\right\|_{H^1(\Omega)}\notag\\
		+&\left\|w_I(t)-w_J(t)\right\|_{H^1(\Omega)}\cdot\left\|\frac{1}{[w_2(t)]^3}\right\|_{H^1(\Omega)}\notag\\
		\leq&L_W C_3\left\|w_1(t)-w_2(t)\right\|_{H^1(\Omega)}+C_1^3\left\|w_I(t)-w_J(t)\right\|_{H^1(\Omega)}\notag\\
		\leq&L^2_WC_3\left\|u_1(t)-u_2(t)\right\|_{H^2(\Omega)}+C_1^3\left\|w_I(t)-w_J(t)\right\|_{H^1(\Omega)}\label{2w-cubic}.
	\end{align}
	Equation \eqref{2w-cubic}  and the formulation \eqref{2Fre-w} of the Fr\'{e}chet derivative $w'(u)q$ of $w(u)$ on $u$ imply
	\begin{align}
		 \|w_I(t)-w_J(t)\|_{H^1(\Omega)}=&\left\|2\beta_F\int_0^tT_{21}(t-s)\left(\frac{w_I(s)}{[w_1(s)]^3}-\frac{w_J(s)}{[w_2(s)]^3}\right)ds\right\|_{H^1(\Omega)}\notag\\
		\leq&2\beta_F \int_0^t \sup_{0\leq s\leq t}\left\|T_{21}(t-s)\right\|_{\mathcal{B}\left(L^2(\Omega), H_0^1(\Omega)\right)}\left\|\frac{w_I(s)}{[w_1(s)]^3}-\frac{w_J(s)}{[w_2(s)]^3}\right\|_{L^2(\Omega)}ds\notag\\
		\leq&2\beta_F  M_0\int_0^t\left\|\frac{w_I(s)}{[w_1(s)]^3}-\frac{w_J(s)}{[w_2(s)]^3}\right\|_{H^1(\Omega)}ds\notag\\
		\leq&2\beta_F M_0\left(L^2_{W}C_3 T_0\|{u}_1(t)-{u}_2(t)\|_{H^2(\Omega)}
		+C_1^3\int_0^t\left\|w_I(s)-w_J(s)\right\|_{H^1(\Omega)}ds\right)\notag.
	\end{align}
	 Gronwall's inequality then gives
	\[ \|w_I(t)-w_J(t)\|_{H^1(\Omega)}\leq 2\beta_F M_0L^2_{W}C_3T_0e^{2\beta_FM_0C_1^3 T_0}\|{u}_1(t)-{u}_2(t)\|_{H^2(\Omega)}.\]
	Thus we obtain the estimate \eqref{2Lip-Fre-w}  by setting
	\[L_{F_2}=2\beta_F M_0L^2_{W}C_3T_0e^{2\beta_FM_0C_1^3 T_0}.\]
	Similarly,  there exists a positive Lipschitz constant $L_{F_1}=L_{F_1}\left(L_{F_2},\ \|{v}_0\|_{L^2(\Omega)}\right)$ such that the Frech\'{e}t derivative $v{'}(u)$ of the first component $v(u)$ of $W(u)$ on $u$, defined via
	\be\label{2Fre-v}
	[v{'}(u)q](t)= \int_0^t T_{11}(t-s)\left\{\beta_p q(s)+2\beta_F\frac{[w{'}(u)q](s)}{[w(u)]^3(s)}\right\}ds,
	\ee
	is a map $t\to[v{'}(u)](t)$ from $[0, T]$ to  $C\left([0, T]; \mathcal{B}\left(H_0^1(\Omega), L^2(\Omega)\right)\right)$ and satisfies
	\be\label{2Lip-Fre-v}
	 \sup_{t\in[0, T]}\|[v'(u_1)q](t)-[v'(u_2)q](t)\|_{L^2(\Omega)}\leq L_{F_1}\sup_{t\in[0, T]}\|{u}_1(t)-{u}_2(t)\|_{H^2(\Omega)}
	\ee
	for all $ q\in C\left([0, T]; H_0^1(\Omega)\right)$. Letting $L_F=\max\left\{L_{F_1},L_{F_2}\right\}$, \eqref{2Lip-Fre-w} and \eqref{2Lip-Fre-v} imply  the assertion \eqref{2Lip-Frechet-W-I}.
\end{proof}

\begin{cor}\label{2Holder-Frechet-W-I-Cor}
	Fix $T\in (0, T_0)$, $r\in\left(0, \frac{\kappa}{2C}\right)$ with  $\kappa= \inf_{x\in{\Omega}}w_0(x)$ and positive constant $C=C(\Omega)$. 
	If $u\in C^\alpha([0, T]; B_{H^2}(u_0, r))$,
	then there exists a positive Lipschitz constant $L_M$ depending on
	$\alpha$, $M_0$, $T_0$, $\Omega$, $\|v_0\|_{L^2(\Omega)}$, $\|w_0\|_{H^1(\Omega)}$, $\kappa$, $\beta_F$, $\beta_p$,
	such that
	\be\label{2Holder-Frechet-W-I}
	\begin{split}
		\sup_{0\leq t<t+h\leq T}\left\|[{W}'(u)q](t+h)-[{W}'(u)q](t)\right\|_{L^{2}(\Omega)\times H^{1}(\Omega)}\leq&h^\alpha L_{M}\sup_{t\in [0, T]}\left\|q(t)\right\|_{H^2(\Omega)}\\
		+&h^\alpha TL_{M}\left\|q\right\|_{C^\alpha\left([0, T]; H^2(\Omega)\right)}
	\end{split}\ee
	holds for all $q\in C^\alpha([0, T]; B_{H^2}(\tilde{u}_0, r))$, where  $\tilde{u}_0=u_0-\theta_1\in H^{2+\sigma}(\Omega)\cap H_0^1(\Omega)$ and $u_0\in H^{2+\sigma}(\Omega)$ with $u|_{\partial\Omega}=\theta_1$ and $\sigma\in(0, \frac{1}{2})$.
\end{cor}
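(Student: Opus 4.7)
The plan is to adapt the argument from Corollary \ref{2Lip-Frechet-W}, now tracking temporal differences $t \mapsto t+h$ in the Fr\'echet derivative rather than differences in the point of linearisation, and closing the estimate with Gronwall's inequality. Setting $F(s) = \beta_p q(s) + 2\beta_F [w'(u)q](s)/[w(u)]^3(s)$, the integral formula \eqref{2Fre-w}, together with the substitution $\sigma = s-h$ on the range $s\in[h,t+h]$, gives
\[
[w'(u)q](t+h) - [w'(u)q](t) = \int_0^h T_{21}(t+h-s) F(s)\, ds + \int_0^t T_{21}(t-\sigma)\bigl[F(\sigma+h) - F(\sigma)\bigr]\, d\sigma.
\]

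First, I would estimate the short integral by $M_0 h \sup_s\|F(s)\|_{L^2}$; using \eqref{2bound-Fre-W-01} to control $\|[w'(u)q](s)\|_{H^1} \leq L_W\|q\|_{C([0,T];H^2)}$ together with the lower bound $w(u)\geq\kappa/2$, this is dominated by a multiple of $h\|q\|_{C([0,T];H^2)}\leq T^{1-\alpha}h^\alpha\|q\|_{C([0,T];H^2)}$. For the remaining integral, I would split $F(\sigma+h) - F(\sigma)$ into a direct $q$-contribution, bounded by $\beta_p h^\alpha [q]_{C^\alpha([0,T];H^2)}$, and a $w$-contribution expressed as
\[
\frac{[w'(u)q](\sigma+h) - [w'(u)q](\sigma)}{[w(u)]^3(\sigma+h)} + [w'(u)q](\sigma)\left(\frac{1}{[w(u)]^3(\sigma+h)} - \frac{1}{[w(u)]^3(\sigma)}\right).
\]
The second summand is controlled by the H\"older estimate on $w(u)$ from Corollary \ref{2Holdercontinuity}, the uniform bound $\|[w'(u)q](\sigma)\|_{H^1}\leq L_W\|q\|_{C([0,T];H^2)}$, the $H^1$-algebra property from Lemma \ref{alg}, and the reciprocal estimates from Lemma \ref{2ndestimates}. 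The first summand is precisely the quantity we are trying to bound.

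Writing $g(t):=\|[w'(u)q](t+h) - [w'(u)q](t)\|_{H^1}$, assembling these pieces leads to an inequality of the form
\[
g(t) \leq A h^\alpha + B\int_0^t g(\sigma)\, d\sigma,
\]
where $A$ has the structure $L_M\|q\|_{C([0,T];H^2)} + T L_M[q]_{C^\alpha([0,T];H^2)}$ and $B$ depends only on the data $\beta_F,\beta_p,M_0,\kappa,\Omega,T_0$. Gronwall's inequality then yields the desired H\"older bound on $w'(u)q$ in $H^1$. The component $v'(u)q$ is handled identically, using \eqref{2Fre-v} with $T_{11}$ acting $L^2\to L^2$ in place of $T_{21}$, and taking $L_M$ to be the maximum of the two resulting constants gives \eqref{2Holder-Frechet-W-I}.

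The main obstacle will be the self-referential appearance of $w'(u)q$ in the nonlinearity $F$: this forces the Gronwall closure and demands careful bookkeeping so that contributions from the H\"older difference of $q$ collect into the factor $T[q]_{C^\alpha([0,T];H^2)}$, while contributions from the H\"older difference of $w(u)$ collect into a factor proportional to $\|q\|_{C([0,T];H^2)}$, exactly matching the split stated in \eqref{2Holder-Frechet-W-I}. A secondary subtlety is that the short integral $\int_0^h$ only gives an $O(h)$ bound, which must be converted to $O(h^\alpha)$ at the cost of a $T^{1-\alpha}$ factor absorbed into $L_M$.
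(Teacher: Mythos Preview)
Your proposal is correct and follows essentially the same approach as the paper: the same three-term decomposition of $[w'(u)q](t+h)-[w'(u)q](t)$ (short integral over $[0,h]$, $\beta_p q$-difference integral, and $w'(u)q/w^3$-difference integral), the same splitting of the fraction difference, the same ingredients (Lemma~\ref{2ndestimates}, Corollary~\ref{2Holdercontinuity}, the bound \eqref{2bound-Fre-W-01}), and closure by Gronwall. The only cosmetic difference is that the paper estimates the short integral in the $H^1$-norm rather than $L^2$, and treats $v'(u)q$ by directly inserting the already-established bound on $w'(u)q$ (since the nonlinearity in \eqref{2Fre-v} involves $w'(u)q$, not $v'(u)q$) rather than running a second Gronwall, but this is immaterial.
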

\begin{proof}
	For $T\in(0, T_0)$ 
$u\in C^\alpha([0, T]; B_{H^2}(u_0, r))$,
	 Theorem \ref{2nd-solu-thm} and Corollary \ref{2Holdercontinuity} imply that $w(u)\in C^\alpha([0, T]; B_{H^2}(w_0, r))$ is a unique mild solution of the semilinear hyperbolic equation \eqref{2ndcp1-1-2} and  $w(u)$ satisfies
	\[\left[w(u)\right](t)=\theta_2+T_{21}(t)v_0+T_{22}(t)\left(w_0-\theta_2\right)+ \int_0^tT_{21}(t-s)\left(\beta_p(u-1)-\frac{\beta_F}{[w(u)]^2(s)}\right)ds.\]
	The definitions \eqref{2Fre-W} and \eqref{2Fre-W-1} of the Fr\'{e}chet derivative $W'(u)$ imply the Fr\'{e}chet derivative $w'(u)$ of $w(u)$ on $u$ satisfies
	\be\label{2TFre-w2}
	[w'(u)q](t)= \int_0^t T_{21}(t-s)\left\{\beta_p q(s)+2\beta_F\frac{[w'(u)q](s)}{[w(u)]^3(s)}\right\}ds.
	\ee
	We aim to show that there is a constant $L_{M_1}>0$, such that
	\be\label{2H-I}
	\left\|[w'(u)q](t+h)-[w'(u)q](t)\right\|_{H^1(\Omega)}\leq L_{M_1}h^\alpha\left(\sup_{t\in[0, T]}\left\|q(t)\right\|_{H^2(\Omega)}+T\left\|q\right\|_{C^\alpha\left([0, T]; H^2(\Omega)\right)}\right)
	\ee
	holds for all $ 0\leq t<t+h\leq T,\ h\in(0, T]$. Notice that
	\be\label{2H00}\begin{split}
		[w'(u)q](t+h)-[w'(u)q](t)=& \int_0^h T_{21}(t+h-s)\left\{\beta_p q(s)+2\beta_F\frac{[w'(u)q](s)}{[w(u)]^3(s)}\right\}ds\\
		+&\int_0^t T_{21}(t-s)\beta_p[q(s+h)-q(s)]ds\\
		+&\int_0^t T_{21}(t-s)2\beta_F\left\{\frac{[w'(u)q](s+h)}{[w(u)]^3(s+h)}-\frac{[w'(u)q](s)}{[w(u)]^3(s)}\right\}ds.
	\end{split}\ee
	Having $q(t)\in H^2(\Omega)\cap H_0^1(\Omega)$, the definition \eqref{2def-W} of $W(u)$ and the definition \eqref{2Fre-W} of $W'(u)$ imply
	\[\beta_p q(t)+2\beta_F\frac{[w'(u)q](t)}{[w(u)]^3(t)}\in H_0^1(\Omega).\]
	Combining \eqref{2ndC-a} of Lemma \ref{2ndestimates} with \eqref{2bound-Fre-W} gives
	\[\sup_{t\in [0, T]}\left\|\frac{[w'(u)q](t)}{[w(u)]^3(t)}\right\|_{H^1(\Omega)}\hspace*{-0.1cm}\leq\hspace*{-0.1cm}\sup_{t\in[0, T]}\left\{\left\|\frac{1}{[w(u)]^3(t)}\right\|_{H^1(\Omega)}\hspace*{-0.1cm}\left\|[w'(u)q](t)\right\|_{H^1(\Omega)}\hspace*{-0.1cm}\right\}\leq C_1^3L_W\hspace*{-0.1cm}\sup_{t\in [0, T]}\hspace*{-0.1cm}\left\|q(t)\right\|_{H^2(\Omega)}.\]
	Therefore
	\begin{align}\label{2H01}
		&\left\| \int_0^h T_{21}(t+h-s)\left\{\beta_p q(s)+2\beta_F\frac{[w'(u)q](s)}{[w(u)]^3(s)}\right\}ds\right\|_{H^1(\Omega)}\notag\\
		\leq&hM_0\left\{\beta_p\sup_{t\in [0, T]}\|q(t)\|_{H^2(\Omega)}+\sup_{t\in [0, T]}\left\|\frac{[w'(u)q](t)}{[w(u)]^3(t)}\right\|_{H^1(\Omega)}\right\}\notag\\
		\leq&hM_0\left(\beta_p+C_1^3L_W\right)\sup_{t\in [0, T]}\left\|q(t)\right\|_{H^2(\Omega)}.
	\end{align}
	Now $q\in C^\alpha([0, T]; B_{H^2}(\tilde{u}_0, r))$ implies
	\begin{align}
		\left\| \int_0^t T_{21}(t-s)\beta_p[q(s+h)-q(s)]ds\right\|_{H^1(\Omega)}\leq& TM_0\beta_p\sup_{0\leq t<t+h\leq T}\left\|q(t+h)-q(t)\right\|_{H^1(\Omega)}\notag\\
		\leq&h^\alpha TM_0\beta_p\|q\|_{C^\alpha\left([0, T]; H^2(\Omega)\right)}\label{2H02}.
	\end{align}
	The triangle inequality and the algebraic property of $H^1(\Omega)$ (i.e.~Lemma \ref{alg}) imply
	\begin{align}
		&\left\|\frac{[w'(u)q](t+h)}{[w(u)]^3(t+h)}-\frac{[w'(u)q](t)}{[w(u)]^3(t)}\right\|_{H^1(\Omega)}\notag\\
		\leq&\left\|\frac{[w'(u)q](t+h)}{[w(u)]^3(t+h)}-\frac{[w'(u)q](t+h)}{[w(u)]^3(t)}\right\|_{H^1(\Omega)}
		+\left\|\frac{[w'(u)q](t+h)}{[w(u)]^3(t)}-\frac{[w'(u)q](t)}{[w(u)]^3(t)}\right\|_{H^1(\Omega)}\notag\\
		\leq&\left\|[w'(u)q](t+h)\right\|_{H^1(\Omega)}\left\|\frac{1}{[w(u)]^3(t+h)}-\frac{1}{[w(u)]^3(t)}\right\|_{H^1(\Omega)}\notag\\
		+&\left\|[w'(u)q](t+h)-[w'(u)q](t)\right\|_{H^1(\Omega)}\left\|\frac{1}{[w(u)]^3(t)}\right\|_{H^1(\Omega)}.\label{2H02-1}
	\end{align}
	Since $w(u)\in C^\alpha\left([0, T]; B_{H^1}(w_0, r)\right)$ is a mild solution of the semilinear hyperbolic equation \eqref{2ndcp1-1-2}, then from estimate \eqref{2ndC-d} of Lemma \ref{2ndestimates} and the estimate \eqref{2Holdercontinuityformular} of Corollary \ref{2Holdercontinuity}, it follows that
	\begin{align}
		\left\|\frac{1}{[w(u)]^3(t+h)}-\frac{1}{[w(u)]^3(t)}\right\|_{H^1(\Omega)}\leq&C_3 \left\|[w(u)](t+h)-[w(u)](t)\right\|_{H^1(\Omega)}
		\leq C_3L_Uh^\alpha.\label{2H02-2}
	\end{align}
	Hence, estimates  \eqref{2ndC-a}, \eqref{2bound-Fre-W}, \eqref{2H02-1} and \eqref{2H02-2} imply
	\begin{align}
		\left\|\frac{[w'(u)q](t+h)}{[w(u)]^3(t+h)}-\frac{[w'(u)q](t)}{[w(u)]^3(t)}\right\|_{H^1(\Omega)}
		\leq& L_W\sup_{t\in [0, T]}\left\|q(t)\right\|_{H^1(\Omega)}C_3L_Uh^\alpha\notag\\
		+&\left\|[w'(u)q](t+h)-[w'(u)q](t)\right\|_{H^1(\Omega)}C_1^3\notag.
	\end{align}
	Therefore, 
	\begin{align}
		&\left\| \int_0^t T_{21}(t-s)2\beta_F\left\{\frac{[w'(u)q](s+h)}{[w(u)]^3(s+h)}-\frac{[w'(u)q](s)}{[w(u_0)]^3(s)}\right\}ds\right\|_{H^1(\Omega)}\notag\\
		\leq&2\beta_FM_0\hspace*{-0.1cm}\left(\hspace*{-0.1cm}T_0L_W\hspace*{-0.1cm}\sup_{t\in [0, T]}\left\|q(t)\right\|_{H^2(\Omega)}C_3L_Uh^\alpha
		+C_1^3\hspace*{-0.1cm}\int_0^t \left\|[w'(u)q](s+h)-[w'(u)q](s)\right\|_{H^1(\Omega)}ds\hspace*{-0.1cm}\right).\label{2H03}
	\end{align}
	Consequently, \eqref{2H00}, \eqref{2H01}, \eqref{2H02} and \eqref{2H03} imply
	\begin{align}
		\left\|[w'(u)q](t+h)-[w'(u)q](t)\right\|_{H^1(\Omega)}
		\leq& h^\alpha T_0^{1-\alpha}M_0\left[\beta_p+C_1^3L_W\right]\sup_{t\in [0, T]}\left\|q(t)\right\|_{H^2(\Omega)}\notag\\
		+&h^\alpha TM_0\beta_p\|q\|_{C^\alpha\left([0, T]; H^2(\Omega)\right)}\notag\\
		+&h^\alpha2\beta_FM_0T_0L_WC_3L_U\sup_{t\in [0, T]}\left\|q(t)\right\|_{H^2(\Omega)}\notag\\
		+&2\beta_FM_0C_1^3 \int_0^t \left\|[w'(u)q](s+h)-[w'(u)q](s)\right\|_{H^1(\Omega)}ds\notag.
	\end{align}
	Set
	$R_1=M_0T_0^{1-\alpha}\left[\beta_p+C_1^3L_W\right]$, $R_2=2\beta_FM_0T_0L_WC_3L_U$, $R_3=M_0\beta_p$ and  $R_4=2\beta_FM_0C_1^3$.
	Gronwall's inequality implies $\forall\ 0\leq t<t+h\leq T$,
	\begin{align}
		\left\|[w'(u)q](t+h)-[w'(u)q](t)\right\|_{H^1(\Omega)}\leq &h^\alpha e^{R_4T_0}(R_1+R_2)\sup_{t\in [0, T]}\left\|q(t)\right\|_{H^2(\Omega)}\notag\\
		+&h^\alpha T e^{R_4T_0} R_3\|q\|_{C^\alpha\left([0, T]; H^2(\Omega)\right)}.\notag
	\end{align}
	Eqn.~\eqref{2H-I} holds by setting $L_{M_1}= (R_1+R_2+R_3)e^{R_4T_0}$, where $L_{M_1}$ depends on $\alpha$, $M_0$, $T_0$, $\Omega$, $\|w_0\|_{H^1(\Omega)}$, $\kappa$, $\beta_F$, $\beta_p$.
	
	Similarly, there exists a Lipschitz constant $L_{M_2}>0$ depending on $L_{M_1}$ and $\|{v}_0\|_{L^2(\Omega)}$, such that the Frech\'{e}t derivative $v{'}(u)$ of the first component $v(u)$ of $W(u)$, defined via
	\[[v{'}(u)q](t)= \int_0^t T_{11}(t-s)\left\{\beta_p q(s)+2\beta_F\frac{[w{'}(u)q](s)}{[w(u)]^3(s)}\right\}ds,\]
	satisfies
	\be\label{2H05}
	\|[v'(u)q](t+h)-[v'(u)q](t)\|_{L^2(\Omega)}\leq L_{M_2}h^\alpha\left(\sup_{t\in[0, T]}\left\|q(t)\right\|_{H^2(\Omega)}+T\left\|q\right\|_{C^\alpha\left([0, T]; H^2(\Omega)\right)}\right).
	\ee
	Setting $L_M=\max\left\{L_{M_1},L_{M_2}\right\}$,  the assertion \eqref{2Holder-Frechet-W-I} follows from \eqref{2H-I} and \eqref{2H05}.
\end{proof}


\section{Well-posedness of the Coupled System}\label{2ndsection4}


	In this section we prove the main result of this article, Theorem \ref{2ndcoupled system}. First,  recall the formulation \eqref{2ndcp3} of the coupled system \eqref{2ndcp1} in the open bounded subset $\Omega \subset \mathds{R}$:
	\bse
	\be
	\frac{\partial u}{\partial t}=\frac{1}{w}\frac{\partial}{\partial x}\left(w^3u\frac{\partial u}{\partial x}\right)-\frac{v}{w}u,\quad x\in\Omega,\ t\geq 0;
	\ee
	\be
	\frac{\partial v}{\partial t}=\frac{\partial^2w}{\partial x^2}-\frac{\beta_F}{w^2}+\beta_p(u-1),\quad x\in\Omega,\ t\geq 0;
	\ee
	\be
	\frac{\partial w}{\partial t}=v,\quad x\in\Omega,\ t\geq 0.
	\ee
	\ese
	Here, the initial values are denoted by $u(x,0)=u_0(x) \in H^{2+\sigma}(\Omega)$ ($\sigma\in(0, \frac{1}{2})$), $v(x,0)=v_0(x)\in H_0^1(\Omega)$, $w(x,0)=w_0(x) \in H^2(\Omega)$. We assume that $u_0\geq\epsilon_1$ and $w_0 \geq \kappa$, for given constants $\epsilon_1, \kappa>0$. The boundary values are given by $u|_{\partial\Omega}=\theta_1>0$,  $w|_{\partial\Omega}=\theta_2>0$. 
 Note that $\tilde{u}_0=u_0-\theta_1\in H^{2+\sigma}(\Omega)\cap H_0^1(\Omega)$.  

\subsection*{Abstract Formulation}

We study the existence and uniqueness of a strict solution for the initial-boundary value problem for this parabolic-hyperbolic coupled system by writing it in the following, equivalent abstract  quasilinear parabolic equation with coefficients involving $v(u)$ and $w(u)$:
\bse\label{2ndQPE}
\be\label{2ndQPE-1}
\frac{\partial u}{\partial t}=\frac{1}{w(u)}\frac{\partial}{\partial x}\left([w(u)]^3u\frac{\partial u}{\partial x}\right)-\frac{v(u)}{w(u)}u,\quad (x,t)\in\Omega\times(0, T),
\ee
\be\label{2ndQPE-2}
u(x,0)=u_0(x),\quad x\in\Omega,\quad u(x,t)=\theta_1,\quad (x,t)\in\partial\Omega\times[0, T].
\ee
\ese
Here $u=u(x,t)$ is an unknown function, $v(u)=[v(u)](x,t)$ and $w(u)=[w(u)](x,t)$ are  given as  functions depending on $u$ by the integral formulation 
\[\begin{pmatrix}v(t)\\ w(t)\end{pmatrix}=\begin{pmatrix}0\\ \theta_2\end{pmatrix}+ T(t)\begin{pmatrix}v_0\\ w_0-\theta_2\end{pmatrix}+\int_0^t\left\{T(t-s)\begin{pmatrix}-\beta_F[w(s)]^{-2}+\beta_p\left(u(s)-1\right)\\ 0 \end{pmatrix}\right\}ds,\]
where $\left\{T(t):\ t\geq 0\right\}$ is the strongly continuous semigroup from Lemma \ref{2ndgenerator}.

Let $\tilde{u}=u-\theta_1$ be restricted in $C\left([0, T]; H^2(\Omega)\cap H_0^1(\Omega)\right)$ temporarily, then from the existence of the mild solution of the semilinear evolution equation \eqref{2nd-SWE-1}, i.e.~Theorem \ref{2nd-solu-thm}, the functions $v$ and $w$  satisfy the definition \eqref{2def-W} of solution operator $W$ of $u$ and the mapping property:
\[v:\  C\left([0, T]; H^2(\Omega)\cap H_0^1(\Omega)\right)\to C\left([0, T]; L^2(\Omega)\right),\qquad  \tilde{u}\mapsto v(\tilde{u}+\theta_1);\]
\[w:\ C\left([0, T]; H^2(\Omega)\cap H_0^1(\Omega)\right)\to C\left([0, T]; H^1(\Omega)\right),\qquad \tilde{u}\mapsto w(\tilde{u}+\theta_1).\] 
Hence 
\bse\label{2ndF-def}
\be\label{2ndF-def-1}
\tilde{u}\mapsto F(\tilde{u}):\quad C\left([0, T]; H^2(\Omega)\cap H_0^1(\Omega)\right)\to C\left([0, T]; L^2(\Omega)\right),
\ee
where
\be\label{2ndF-def-2}
F(\tilde{u})=\frac{1}{w(\tilde{u}+\theta_1)}\frac{\partial}{\partial x}\left(\left[w(\tilde{u}+\theta_1)\right]^3(\tilde{u}+\theta_1)\frac{\partial\tilde{u}}{\partial x}\right)-\frac{v(\tilde{u}+\theta_1)}{w(\tilde{u}+\theta_1)}(\tilde{u}+\theta_1).
\ee
\ese
The linearisation of $F(\tilde{u})$ is defined by
\be\label{2ndlinearisation}
q\mapsto F'(\tilde{u}_0)q:\quad C\left([0, T]; H^2(\Omega)\cap H_0^1(\Omega)\right)\to C\left([0, T]; L^2(\Omega)\right).
\ee
Here,  $F'(\tilde{u}_0)q$ is the Fr\'{e}chet derivative of $F(\tilde{u})$ on $\tilde{u}$ at $\tilde{u}_0$, $F'(\tilde{u}_0)q$ at $t$ is given as:
\be\begin{split}
	\left[F'(\tilde{u}_0)q\right](t)=&\frac{1}{[w(u_0)](t)}\frac{\partial}{\partial x}\left\{[w(u_0)]^3(t)u_0\frac{\partial q(t)}{\partial x}+[w(u_0)]^3(t)q(t)\frac{\partial u_0}{\partial x}\right\}\\
	+&\frac{1}{[w(u_0)](t)}\frac{\partial}{\partial x}\left\{3[w(u_0)]^2(t)[w'(u_0)q](t)u_0\frac{\partial u_0}{\partial x}\right\}\\
	-&\frac{[w'(u_0)q](t)}{[w(u_0)]^2(t)}\frac{\partial}{\partial x}\left([w(u_0)]^3(t)u_0\frac{\partial u_0}{\partial x}\right)-\frac{[v(u_0)](t)}{[w(u_0)](t)}q(t)\\
	-&\frac{[w(u_0)](t)[v'(u_0)q](t)-[v(u_0)](t)[w'(u_0)q](t)}{[w(u_0)]^2(t)}u_0, \label{2ndFre-F}
\end{split}\ee
where the functions $v(u_0)$ and $w(u_0)$ satisfy the definition \eqref{2def-W} of solution operator $W$ with $u=u_0$. Equivalently, $(\tilde{v}, \tilde{w})=(v(u_0), w(u_0)- \theta_2)$ is a unique mild solution of the semilinear evolution equation \eqref{2nd-SWE-1} with $\tilde{u}=u_0-\theta_1$, and $([v(u_0)](0),[w(u_0)](0))=(v_0, w_0)$. 


\subsection*{Well-posedness of Linearised Formulation}

Essentially, we aim to define an operator $\mathcal{P}^*$ by the expression
\be\begin{split}
	\mathcal{P}^*q(t)=&\frac{1}{w_0}\frac{\partial}{\partial x}\left\{w_0^3u_0\frac{\partial q(t)}{\partial x}+w_0^3q(t)\frac{\partial u_0}{\partial x}\right\}+\frac{1}{w_0}\frac{\partial}{\partial x}\left\{3w_0^2[w'(u_0)q](0)u_0\frac{\partial u_0}{\partial x} \right\}\\
	-&\frac{[w'(u_0)q](0)}{w_0^2}\frac{\partial}{\partial x}\left(w_0^3u_0\frac{\partial u_0}{\partial x}\right)-\frac{v_0}{w_0}q(t)-\frac{w_0[v'(u_0)q](0)-v_0[w'(u_0)q](0)}{w_0^2}u_0. \label{2ndlinearopt-0}
\end{split}\ee
Note that the definition of the Fr\'{e}chet derivative $w'(u)$ of $w(u)$ on $u$ at $u=u_0$ and $t=0$ is by
\[[w'(u_0)q](0)= \lim_{h\rightarrow 0}\frac{1}{h}\left\{[w(u_0+hq)](0)-[w(u_0)](0)\right\}.\]
Here $h\in\mathds{R}$ is small such that $u_0+hq\in C\left([0, T]; B_{H^2}(u_0,  r)\right)$ for all $q\in C\left([0, T]; H^2(\Omega)\cap H_0^1(\Omega)\right)$. Since $(\tilde{v}_h, \tilde{w}_h)=(v(u_0+hq), w(u_0+hq)-\theta_2)$ is a unique mild solution of the semilinear evolution equation \eqref{2nd-SWE-1} with $\tilde{u}=u_0+hq-\theta_1$, then it follows that $([v(u_0+hq)](0),  [w(u_0+hq)](0))=(v_0, w_0)$. As $([v(u_0)](0),[w(u_0)](0))=(v_0, w_0)$, we have $[w'(u_0)q](0)=0$, analogously $[v'(u_0)q](0)=0$, and then  \eqref{2ndlinearopt-0} is simplified to
\[\mathcal{P}^*q(t)=\frac{1}{w_0}\frac{\partial}{\partial x}\left\{w_0^3u_0\frac{\partial q(t)}{\partial x}+w_0^3q(t)\frac{\partial u_0}{\partial x}\right\}-\frac{v_0}{w_0}q(t).\]
We therefore define $\mathcal{P}^*$ as the linear operator 
\be\label{2ndlinearopt}
\mathcal{P}^*: D\left(\mathcal{P}^*\right)\subseteq H^2(\Omega)\cap H_0^1(\Omega)\to L^2(\Omega),\ \mathcal{P}^*\psi=\frac{1}{w_0}\frac{\partial}{\partial x}\left\{w_0^3u_0\frac{\partial\psi}{\partial x}+\left(w_0^3\frac{\partial u_0}{\partial x}\right)\psi\right\}-\frac{v_0}{w_0}\psi,
\ee  
defined for smooth functions satsifying homogeneous Dirichlet  boundary conditions. It is a Dirichlet realisation of the differential expression in \eqref{2ndQPE-1}.
Using $\mathcal{P}^*$, we rewrite \eqref{2ndQPE} as an equation for the unknown function $\tilde{u}$:
\be\label{2ndNPE}
\tilde{u}'(t)=\mathcal{P}^*\tilde{u}(t)+[F(\tilde{u})](t)-\mathcal{P}^*\tilde{u}(t), \quad t\in[0, T],\quad \tilde{u}(0)=\tilde{u}_{0}.
\ee
The next lemma gives an elliptic  estimate for the quadratic form associated to $\mathcal{P}^*$, with a standard proof.
\begin{lem}\label{2ndlinearisation-elliptic}
	There exist  positive constants $K$ and $K_{o}$  depending on $u_0$, $w_0$, such that for all $t\in [0, T]$,  the following elliptic estimate is satisfied:
	\be\label{2ndelliptic-est}
	\left|\int_{\Omega}\frac{q(t)}{w_0}\left[w_0^3u_0\frac{\partial q(t)}{\partial x}\right]dx\right|\geq K\int_{\Omega}\left|\frac{\partial q(t)}{\partial x}\right|^2dx-  K_{o}\int_{\Omega}|q(t)|^2dx,\qquad\forall \ q(t)\in D\left(\mathcal{P}^*\right)
	\ee
\end{lem}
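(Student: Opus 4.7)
The approach is the classical G\aa rding-type argument: integrate by parts so that the principal part of $\mathcal{P}^*$ becomes a coercive $\|\partial_x q\|_{L^2}^2$ term, then absorb the remaining first-order contributions using Young's inequality. The positivity hypotheses $u_0 \geq \epsilon_1 > 0$ and $w_0 \geq \kappa > 0$ on the initial data provide uniform lower bounds on the principal coefficient and produce the coercivity constant $K$; the constant $K_{o}$ arises from the controlled first- and zeroth-order perturbations.

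Concretely, I would first test the expression against $q(t)$ (with an appropriate weight, e.g.\ $w_0$, exposing the divergence structure inherited from \eqref{2ndlinearopt}) and integrate by parts. Since $q(t)\in D(\mathcal{P}^*) \subset H^2(\Omega)\cap H_0^1(\Omega)$ vanishes on $\partial\Omega$, all boundary contributions drop out, leaving a principal term of the form
\[
\int_\Omega w_0^3 u_0 \left|\frac{\partial q(t)}{\partial x}\right|^2 dx
\]
together with lower-order first-derivative cross terms whose coefficients involve $\partial_x(w_0^3 u_0)$ and $v_0/w_0$. The bounds $w_0\geq \kappa$ and $u_0 \geq \epsilon_1$ then give $w_0^3 u_0 \geq \kappa^3\epsilon_1 > 0$, so the principal term dominates $K \|\partial_x q(t)\|_{L^2}^2$.

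Next, I would control the remaining cross terms, each of schematic form $\int_\Omega a(x)\, q(t)\, \partial_x q(t)\, dx$. The one-dimensional Sobolev embedding $H^1(\Omega) \hookrightarrow C(\overline{\Omega})$, together with $u_0 \in H^{2+\sigma}(\Omega)$ and $w_0 \in H^2(\Omega)$, ensures $\partial_x u_0,\ \partial_x w_0 \in L^\infty(\Omega)$, and $v_0\in H_0^1(\Omega)$ with $w_0\geq \kappa$ yields $v_0/w_0 \in L^\infty$. So each such coefficient $a$ lies in $L^\infty(\Omega)$, and Young's inequality gives
\[
\left|\int_\Omega a(x)\, q\, \partial_x q\, dx\right| \leq \delta \int_\Omega \left|\frac{\partial q}{\partial x}\right|^2 dx + C(\delta)\int_\Omega |q|^2\, dx,
\]
with $C(\delta)$ depending on $\|a\|_{L^\infty}$. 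Choosing $\delta$ smaller than $\kappa^3 \epsilon_1/2$ allows the $\delta$-term to be absorbed into the coercive principal part, leaving a net $L^2$-loss which defines $K_{o}$.

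The main expected obstacle is organisational rather than technical: one must bookkeep carefully the several cross terms arising from both the quasilinear divergence structure and the zeroth-order contributions in \eqref{2ndlinearopt}, and select the correct weight in the test function so that the $\mathcal{P}^*$ form cleanly exposes the principal coercive part while keeping the perturbative terms of first-derivative or lower-order type. Once this structural point is settled, the proof reduces to two applications of Young's inequality and the Sobolev embedding in one dimension; the resulting constants $K$ and $K_{o}$ depend on $\epsilon_1$, $\kappa$, $\|u_0\|_{H^{2+\sigma}(\Omega)}$, $\|w_0\|_{H^2(\Omega)}$, and $\|v_0\|_{H^1(\Omega)}$, consistent with the dependence advertised in the lemma.
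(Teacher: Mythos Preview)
Your approach is correct and essentially identical to the paper's: integrate by parts using $q|_{\partial\Omega}=0$, extract the coercive principal term via $u_0\geq\epsilon_1$ and $w_0\geq\kappa$, and absorb the single cross term $\int_\Omega u_0 w_0\, \partial_x w_0 \cdot q\,\partial_x q\,dx$ by Young's inequality together with the Sobolev bound $\|u_0 w_0 \partial_x w_0\|_{L^\infty}\leq C\|u_0\|_{H^1}\|w_0\|_{H^2}^2$. Note only that the lemma concerns just the principal part of $\mathcal{P}^*$ (so the $v_0/w_0$ and $w_0^3\partial_x u_0$ contributions you list do not enter here), and that the weight $q/w_0$ already built into the statement yields, after integration by parts, the principal coefficient $w_0^2 u_0\geq\kappa^2\epsilon_1$ rather than $\kappa^3\epsilon_1$.
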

\begin{proof}
	For $t\in [0, T]$ and $q(t)\in D\left(\mathcal{P}^*\right)$, the highest order derivative term  of $\mathcal{P}^*q(t)$ is 
	\[\mathcal{P}^*_hq(t)=\frac{1}{w_0}\frac{\partial}{\partial x}\left\{w_0^3u_0\frac{\partial q(t)}{\partial x}\right\}\]
    Integrating by parts, we obtain
	\begin{align}
		 \int_{\Omega}\frac{q(t)}{w_0}\frac{\partial}{\partial x}\left\{w_0^3u_0\frac{\partial q(t)}{\partial x}\right\}dx=&\left\{w_0^2u_0q(t)\frac{\partial q(t)}{\partial x}\right\}_{\partial\Omega}-\int_{\Omega}\left\{\frac{\partial}{\partial x}\left(\frac{q(t)}{w_0}\right)\right\}\left\{w_0^3 u_0\frac{\partial q(t)}{\partial x}\right\}dx.\label{2nddiv1}
	\end{align}
	As $q(t)\in D\left(\mathcal{P}^*\right)\subseteq H^2(\Omega)\cap H_0^1(\Omega)$,  $[q(t)](x)=q(x,t)=0$ for all $(x,t)\in\partial\Omega\times[0, T]$ and hence $w_0^2u_0q(t)\frac{\partial q(t)}{\partial x}=0$ on $\partial \Omega$.
	Using that $u_0(x)\geq\epsilon_1>0$ for a given constant $\epsilon_1$ and that $\kappa= \inf_{x\in{\Omega}}w_0(x)$ from \eqref{2nddiv1}, we find
	\begin{align}
		\left|\int_{\Omega}\frac{q(t)}{w_0}\frac{\partial}{\partial x}\left\{w_0^3u_0\frac{\partial q(t)}{\partial x}\right\}dx\right|
		=&\left|\int_{\Omega}\left\{\frac{\partial}{\partial x}\left(\frac{q(t)}{w_0}\right)\right\}\left\{w_0^3 u_0\frac{\partial q(t)}{\partial x}\right\}dx\right|\notag\\
		\geq&\left|\int_{\Omega}u_0 w_0^2\left|\frac{\partial q(t)}{\partial x}\right|^2dx\right|-\left|\int_{\Omega}  u_0w_0\frac{\partial w_0}{\partial x}\left\{q(t)\frac{\partial q(t)}{\partial x}\right\}dx\right|\notag\\
		\geq& {\epsilon_1\kappa^2}\int_{\Omega}\left|\frac{\partial q(t)}{\partial x}\right|^2dx-\left|\int_{\Omega}  u_0w_0\frac{\partial w_0}{\partial x}\left\{q(t)\frac{\partial q(t)}{\partial x}\right\}dx\right|.\notag
	\end{align}
	As  $w_0\in H^2(\Omega)$ and $u_0\in H^{2+\sigma}(\Omega)$ with $\sigma\in(0, \frac{1}{2})$,  writing $C=C(\Omega)$ a positive constant, we have 
	\begin{align}
		\left|\int_{\Omega}u_0w_0\frac{\partial w_0}{\partial x}\left\{q(t)\frac{\partial q(t)}{\partial x}\right\}dx\right|\leq&\left\|u_0w_0\frac{\partial w_0}{\partial x}\right\|_{L^\infty(\Omega)}\left|\int_{\Omega}q(t)\frac{\partial q(t)}{\partial x}dx\right|\notag\\
		\leq&C\left\|u_0w_0\frac{\partial w_0}{\partial x}\right\|_{H^1(\Omega)}\left|\int_{\Omega}q(t)\frac{\partial q(t)}{\partial x}dx\right|\notag\\
		\leq&C\left\|u_0\right\|_{H^1(\Omega)}\left\| w_0\right\|_{H^1(\Omega)}\left\|\frac{\partial w_0}{\partial x}\right\|_{H^1(\Omega)}\left|\int_{\Omega}q(t)\frac{\partial q(t)}{\partial x}dx\right|\notag\\
		\leq&C\left\|u_0\right\|_{H^1(\Omega)}\left\|w_0\right\|^2_{H^2(\Omega)}\left|\int_{\Omega}q(t)\frac{\partial q(t)}{\partial x}dx\right|\notag
	\end{align}
	With $K_2=C\left\|u_0\right\|_{H^1(\Omega)}\left\|w_0\right\|^2_{H^2(\Omega)}$ and Young's inequality, it follows that
	\begin{align}
		\left|\int_{\Omega}\frac{q(t)}{w_0}\frac{\partial}{\partial x}\left\{w_0^3u_0\frac{\partial q(t)}{\partial x}\right\}dx\right|\geq&(\epsilon_1\kappa^2-\varepsilon^2K_2)\int_{\Omega}\left|\frac{\partial q(t)}{\partial x}\right|^2dx-  \frac{K_2}{4\varepsilon^2}\int_{\Omega}|q(t)|^2dx.\label{2ndF}
	\end{align}
	The assertion \eqref{2ndelliptic-est} follows for $\varepsilon$ sufficiently small.
\end{proof}
\begin{cor}\label{2ndgeneratoroflinearisation}
	$\mathcal{P}^*$, defined by \eqref{2ndlinearopt}, is a sectorial operator which generates an analytic semigroup $\left\{e^{t\mathcal{P}^*}:\ t\geq0 \right\}$ on $H_0^1(\Omega)$.
\end{cor}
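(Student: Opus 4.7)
The plan is to realise $\mathcal{P}^*$ as the operator associated with a bounded, coercive, sectorial sesquilinear form and then to invoke the standard generation theorem for analytic semigroups from such forms. First, I would work with the weighted inner product $\langle \psi,\varphi\rangle_{w_0} := \int_\Omega w_0\,\psi\bar\varphi\,dx$, which is equivalent to the standard $L^2$-inner product since $0<\kappa \leq w_0 \leq \|w_0\|_{L^\infty(\Omega)}$. Integration by parts against any $\varphi\in H_0^1(\Omega)$, using the Dirichlet boundary condition inherited from $D(\mathcal{P}^*)$, gives
\[
-\langle \mathcal{P}^*\psi,\varphi\rangle_{w_0}
= a(\psi,\varphi)
:= \int_\Omega\!\left\{w_0^3 u_0\,\partial_x\psi\,\overline{\partial_x\varphi}
+ w_0^3(\partial_x u_0)\,\psi\,\overline{\partial_x\varphi}
+ v_0\,\psi\,\bar\varphi\right\}dx.
\]
Using the algebra property of $H^1(\Omega)$ in one spatial dimension (Lemma~\ref{alg}) together with $u_0\in H^{2+\sigma}(\Omega)$, $w_0\in H^2(\Omega)$ and $v_0\in H_0^1(\Omega)$, the form $a$ is bounded on $H_0^1(\Omega)\times H_0^1(\Omega)$.

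Next I would establish coercivity up to a spectral shift and sectoriality. The Gårding estimate in Lemma~\ref{2ndlinearisation-elliptic}, combined with Poincaré's inequality, produces constants $\lambda_0>0$ and $c>0$ such that
\[
\mathrm{Re}\,a(\psi,\psi) + \lambda_0\|\psi\|_{L^2(\Omega)}^2 \geq c\,\|\psi\|_{H^1(\Omega)}^2.
\]
The only non-self-adjoint contributions to $a$ come from the lower-order terms involving $\partial_x u_0$ and $v_0$, so Cauchy--Schwarz and Young's inequality yield $|\mathrm{Im}\,a(\psi,\psi)|\leq C\|\psi\|_{L^2(\Omega)}\|\psi\|_{H^1(\Omega)}$. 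Combining this with the coercivity estimate provides a sector condition $|\mathrm{Im}\,a(\psi,\psi)| \leq M\,\mathrm{Re}\bigl(a(\psi,\psi)+\lambda_0\|\psi\|_{L^2(\Omega)}^2\bigr)$. Therefore the shifted form $a+\lambda_0\langle\cdot,\cdot\rangle_{w_0}$ is bounded, coercive, and sectorial on $H_0^1(\Omega)$ in the sense of Kato.

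Finally, I would invoke the standard theorem on generation of analytic semigroups from sectorial sesquilinear forms (as in Section~2.7 of Lunardi~\cite{LS1}): the operator associated with $a+\lambda_0\langle\cdot,\cdot\rangle_{w_0}$ on the Gelfand triple $H_0^1(\Omega)\hookrightarrow L^2(\Omega)\hookrightarrow H^{-1}(\Omega)$ is sectorial on the base Hilbert space and generates an analytic semigroup. One-dimensional elliptic regularity identifies its domain with $H^2(\Omega)\cap H_0^1(\Omega) = D(\mathcal{P}^*)$, so this operator coincides with $-(\mathcal{P}^*-\lambda_0 I)$ on $D(\mathcal{P}^*)$; subtracting the bounded shift $\lambda_0 I$ preserves sectoriality by standard perturbation theory, giving the claim for $\mathcal{P}^*$ itself. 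The argument is essentially formal once the Gårding estimate~\eqref{2ndelliptic-est} is available, and I expect no serious obstacle: the only non-routine step is bookkeeping the lower-order terms to verify the sector condition, and this is handled uniformly by the one-dimensional embeddings $H^1(\Omega)\hookrightarrow L^\infty(\Omega)$.
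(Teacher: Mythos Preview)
Your approach is correct and runs parallel to the paper's, though with a different packaging. The paper's proof simply notes that the elliptic (G\aa rding) estimate of Lemma~\ref{2ndlinearisation-elliptic} feeds directly into cited black-box results (Corollaries~12.19 and~12.21 of \cite{GG}, together with resolvent estimates from \cite{OE}) to obtain the sectorial resolvent bound and hence the analytic semigroup. You instead introduce the weighted pairing $\langle\cdot,\cdot\rangle_{w_0}$ so as to cast $\mathcal{P}^*$ as the operator associated with an explicit sesquilinear form on $H_0^1(\Omega)$, then verify boundedness, G\aa rding coercivity, and the sector condition by hand before invoking the Kato form-generation theorem. Both routes rest on the same elliptic lower bound; yours is more self-contained and makes the form structure transparent (the weighting by $w_0$ is a clean way to absorb the $1/w_0$ prefactor into divergence form), while the paper's version is shorter by delegating to the literature. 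One small remark: Lemma~\ref{2ndlinearisation-elliptic} is stated for the principal part in the \emph{unweighted} $L^2$ pairing, so strictly speaking you should derive the G\aa rding bound for $\mathrm{Re}\,a(\psi,\psi)$ directly---which is immediate from $w_0^3 u_0\geq \kappa^3\epsilon_1$ and absorbing the first-order cross term by Young's inequality---rather than invoking that lemma verbatim.
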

\begin{proof}
	Using the Corollary 12.19 and Corollary 12.21 from \cite{GG}, we conclude that the operator $\mathcal{P}^*$ defined in Lemma \ref{2ndlinearisation-elliptic}  satisfies the elliptic estimate \eqref{2ndelliptic-est} and the  estimate \eqref{2ndsectorial-1} in the resolvent set
	\be\label{2ndsectorial-1}
	\rho(\mathcal{P}^*)\supset S_{\Theta,\omega}=\left\{\lambda\in \mathds{C}: \lambda\neq\omega, |\arg(\lambda-\omega)|<\Theta, \omega\in\mathds{R}, \Theta\in \left(\frac{\pi}{2}, \pi\right) \right\}.
	\ee
	The estimate for the resolvent operator $(\lambda-\mathcal{P}^*)^{-1}$ follows (see e.g.~Proposition 1.22, Proposition 1.51 and Theorem 1.52 in \cite{OE}):
	\be\label{2ndsectorial-2}
	\|(\lambda-\mathcal{P}^*)^{-1}\|_{\mathcal{B}\left(L^2(\Omega), H_0^1(\Omega)\right)}\leq   \frac{M}{|\lambda-\omega|}
	\ee
	for $\omega\in \mathds{R}$, $M>0$ and $\lambda\in  S_{\Theta,\omega}$. We conclude that $\mathcal{P}^*$ is a sectorial operator which generates an analytic semigroup $\{e^{t\mathcal{P}^*}: t\geq 0\}$ on $H_0^1(\Omega)$. 
\end{proof}
If the domain $D(\mathcal{P}^*)$ of $\mathcal{P}^*$ is endowed with the graph norm of $\mathcal{P}^*$,
$\|g\|_{D(\mathcal{P}^*)}=\|g\|_{L^2(\Omega)}+\|\mathcal{P}^*g\|_{L^2(\Omega)}$,  then there exists a constant $\gamma_0\geq 1$, such that
\be\label{2ndgraphnorm}
 {\gamma_0}^{-1}\left(\|g\|_{L^2(\Omega)}+\|\mathcal{P}^*g\|_{L^2(\Omega)}\right)\leq \|g\|_{H^2(\Omega)}\leq \gamma_0\left(\|g\|_{L^2(\Omega)}+\|\mathcal{P}^*g\|_{L^2(\Omega)}\right).
\ee
In fact,  as $\left\{H^2(\Omega)\cap H_0^1(\Omega)\right\}\hookrightarrow L^2(\Omega)$, $\mathcal{P}^*\in\mathcal{B}\left(H^2(\Omega)\cap H_0^1(\Omega), L^2(\Omega)\right)$, $\mathcal{B}\left(H^2(\Omega)\cap H_0^1(\Omega), L^2(\Omega)\right)$ is a set of  linear operators from $H^2(\Omega)\cap H_0^1(\Omega)$ to $L^2(\Omega)$, there exists a constant $c_0>0$ such that
\[\|g\|_{L^2(\Omega)}+\|\mathcal{P}^*g\|_{L^2(\Omega)}\leq c_0\|g\|_{H^2(\Omega)},\quad \forall\ g\in H^2(\Omega)\cap H_0^1(\Omega),\ i.e.\ \left\{H^2(\Omega)\cap H_0^1(\Omega)\right\}\hookrightarrow D(\mathcal{P}^*).\]
The properties \eqref{2ndsectorial-1} and \eqref{2ndsectorial-2} imply that $\mathcal{P}^*$ extends to a closed operator, which we again denote by $\mathcal{P}^*$, and then $D(\mathcal{P}^*)$ is a complete Banach space. We conclude  $D\left(\mathcal{P}^*\right)= H^2(\Omega)\cap H_0^1(\Omega)$, which is assertion \eqref{2ndgraphnorm}.  

Since $H^2(\Omega)\cap H_0^1(\Omega)$ is dense in $L^2(\Omega)$, hence $\mathcal{P}^*$ is densely defined in $L^2(\Omega)$ and $\overline{D\left(\mathcal{P}^*\right)}=L^2(\Omega)$. If $t>0$ and $\varphi\in L^2(\Omega)$ then $e^{t\mathcal{P}^*}\varphi\in D\left(\left(\mathcal{P}{^*}\right)^k\right)$ for each $k\in\mathds{N}$. Moreover, there exist constants $\overline{M}_0$, $\overline{M}_1$, $\overline{M}_2>0$ (depending on $\Theta$ in \eqref{2ndsectorial-1} and $M$ in \eqref{2ndsectorial-2}), such that
\be\label{2ndanalyticsemigroupbound}
\left\|t^k\left(\mathcal{P}{^*}\right)^ke^{t\mathcal{P}^*}\right\|_{\mathcal{B}(L^2(\Omega))}\leq \overline{M}_k,\quad s>0,\quad k=0,1,2,\quad t\in [0, T_0).
\ee
\begin{thm}\label{2ndlinear-parabolic-equation}
	Let $\mathcal{P}^*:D(\mathcal{P}^*)\to L^2(\Omega)$ be a sectorial operator and generate an analytic semigroup $e^{t\mathcal{P}^*}$, $ D\left(\mathcal{P}^*\right)\cong H^2(\Omega)\cap H_0^1(\Omega)$ and $\overline{D\left(\mathcal{P}^*\right)}=L^2(\Omega)$. Given $\alpha\in(0, 1)$, $T\in(0, T_0)$, 
	\[\tilde{u}_0\in D(\mathcal{P}^*),\quad\mathcal{F}(0)+\mathcal{P}^*\tilde{u}_0\in \overline{D\left(\mathcal{P}^*\right)},\quad \mathcal{F}\in C^\alpha\left([0, T]; L^2(\Omega)\right),\]
	the function
	\be\label{2ndLPE-mild-sol}
	\varphi(t)=e^{t\mathcal{P}^*}\tilde{u}_0+\int_{0}^te^{(t-s)\mathcal{P}^*}\mathcal{F}(s)ds
	\ee
	is the unique solution in $C^1\left([0, T]; L^2(\Omega)\right)\cap C\left([0, T]; D(\mathcal{P}^*)\right)$ of the problem
	\be\label{2nd4th-LPE}
	\varphi'(t)=\mathcal{P}^* \varphi(t)+\mathcal{F}(t),\quad t\in[0, T],\quad \varphi(0)=\tilde{u}_0.
	\ee
	Moreover, the following maximal regularity property holds:
	\[\mathcal{F}\in C^\alpha([0, T]; L^2(\Omega)),\quad \mathcal{P}^*\tilde{u}_0+\mathcal{F}(0)\in D_{\mathcal{P}^*}(\alpha, \infty)\to\]
	\[\varphi\in C^{\alpha+1}([0, T]; L^2(\Omega))\cap C^\alpha\left([0, T]; H^2(\Omega)\cap H_0^1(\Omega)\right),\quad  \varphi'(t)\in D_{\mathcal{P}^*}(\alpha, \infty),\ \forall\ t\in[0, T],\]
	and there exists a continuous and  increasing function $I:\mathds{R}_{+}\rightarrow \mathds{R}_{+}$ (depending on $\overline{M}_0$, $\overline{M}_1$, $\overline{M}_2$ and $\alpha$) such that
	\be\label{2ndlinear-solu-est}
	\|\varphi\|_{C^\alpha\left([0, T]; D(\mathcal{P}^*)\right)}\leq I(T)\left[\|\mathcal{F}\|_{C^\alpha([0, T]; L^2(\Omega))}+\left\|\mathcal{P}^*\tilde{u}_0+\mathcal{F}(0)\right\|_{D_{\mathcal{P}^*}(\alpha, \infty)}+\|\tilde{u}_0\|_{L^2(\Omega)}\right].
	\ee
\end{thm}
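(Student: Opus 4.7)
\medskip

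The plan is to verify directly that the variation-of-parameters formula \eqref{2ndLPE-mild-sol} defines a strict solution of \eqref{2nd4th-LPE} and then to refine the same argument to obtain the maximal H\"older regularity \eqref{2ndlinear-solu-est}. The central device is the classical splitting
\[
\int_0^t e^{(t-s)\mathcal{P}^*}\mathcal{F}(s)\,ds = \int_0^t e^{(t-s)\mathcal{P}^*}\bigl(\mathcal{F}(s)-\mathcal{F}(t)\bigr)\,ds + \int_0^t e^{(t-s)\mathcal{P}^*}\mathcal{F}(t)\,ds,
\]
which, after application of $\mathcal{P}^*$, turns the first integrand into an absolutely convergent $L^2(\Omega)$-valued integral via the bound
\[
\bigl\|\mathcal{P}^* e^{(t-s)\mathcal{P}^*}\bigl(\mathcal{F}(s)-\mathcal{F}(t)\bigr)\bigr\|_{L^2(\Omega)} \le \overline{M}_1\,[\mathcal{F}]_{C^\alpha}\,(t-s)^{\alpha-1}
\]
from the sectorial estimate \eqref{2ndanalyticsemigroupbound}, while the second, $t$-frozen integral is handled by the identity $\mathcal{P}^*\int_0^t e^{\sigma\mathcal{P}^*}\mathcal{F}(t)\,d\sigma = (e^{t\mathcal{P}^*}-I)\mathcal{F}(t)$, obtained from the fundamental theorem of calculus together with strong continuity of $e^{\sigma\mathcal{P}^*}$ at $\sigma=0$ on $\overline{D(\mathcal{P}^*)}=L^2(\Omega)$.

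\medskip

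First I would use these identities to show that $\varphi(t)\in D(\mathcal{P}^*)$ for every $t\in[0,T]$ with $\mathcal{P}^*\varphi\in C([0,T];L^2(\Omega))$: continuity of the homogeneous term $\mathcal{P}^* e^{t\mathcal{P}^*}\tilde u_0 = e^{t\mathcal{P}^*}\mathcal{P}^*\tilde u_0$ uses $\mathcal{P}^*\tilde u_0\in L^2(\Omega)$, continuity of the frozen piece at $t=0$ exploits the hypothesis $\mathcal{P}^*\tilde u_0+\mathcal{F}(0)\in\overline{D(\mathcal{P}^*)}$, and continuity of the Hölder-weighted piece follows by dominated convergence against the integrable majorant $(t-s)^{\alpha-1}$. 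Differentiating \eqref{2ndLPE-mild-sol} and applying the splitting, one obtains the right derivative
\[
\varphi'(t) = e^{t\mathcal{P}^*}\bigl(\mathcal{P}^*\tilde u_0+\mathcal{F}(0)\bigr) + e^{t\mathcal{P}^*}\bigl(\mathcal{F}(t)-\mathcal{F}(0)\bigr) + \int_0^t \mathcal{P}^* e^{(t-s)\mathcal{P}^*}\bigl(\mathcal{F}(s)-\mathcal{F}(t)\bigr)\,ds,
\]
which simplifies to $\varphi'(t)=\mathcal{P}^*\varphi(t)+\mathcal{F}(t)$. Continuity of the right-hand side in $t$ then lets Lemma \ref{time-derivative-continuity} upgrade this to a two-sided derivative, establishing the strict-solution property; uniqueness is immediate from Lemma \ref{IEE-S}.

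\medskip

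The main obstacle will be the maximal regularity estimate \eqref{2ndlinear-solu-est} under the stronger assumption $\mathcal{P}^*\tilde u_0+\mathcal{F}(0)\in D_{\mathcal{P}^*}(\alpha,\infty)$, where one must extract the factor $h^\alpha$ from the increment $\varphi'(t+h)-\varphi'(t)$. Using the decomposition above, the semigroup increment on the initial quantity is handled by writing $e^{(t+h)\mathcal{P}^*}-e^{t\mathcal{P}^*}=\int_t^{t+h}\mathcal{P}^* e^{\sigma\mathcal{P}^*}\,d\sigma$ and using the defining estimate $\|\sigma^{1-\alpha}\mathcal{P}^* e^{\sigma\mathcal{P}^*}(\mathcal{P}^*\tilde u_0+\mathcal{F}(0))\|_{L^2}\lesssim 1$ of $D_{\mathcal{P}^*}(\alpha,\infty)$, yielding a bound $\lesssim h^\alpha$. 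The convolution increment is split, after inserting $\pm\mathcal{F}(t+h)$ in the integrand, into a past part on $[0,t]$ controlled by the second-order sectorial estimate $\|(\mathcal{P}^*)^2 e^{\sigma\mathcal{P}^*}\|\le \overline{M}_2\sigma^{-2}$ from \eqref{2ndanalyticsemigroupbound} against the Hölder modulus $[\mathcal{F}]_{C^\alpha}$, and a new part on $[t,t+h]$ controlled by the first-order estimate together with $[\mathcal{F}]_{C^\alpha}$. Summing these three pieces gives the bound \eqref{2ndlinear-solu-est} with the asserted dependence on the data. Since $\varphi'=\mathcal{P}^*\varphi+\mathcal{F}$, Hölder control of $\mathcal{P}^*\varphi$ transfers directly to $\varphi'$, producing $\varphi\in C^{\alpha+1}([0,T];L^2(\Omega))\cap C^\alpha([0,T];D(\mathcal{P}^*))$, and inspection of each term in the decomposition of $\varphi'(t)$ shows that $\varphi'(t)\in D_{\mathcal{P}^*}(\alpha,\infty)$ for every $t\in[0,T]$.
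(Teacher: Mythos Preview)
Your proposal is correct and follows the classical Sinestrari--Lunardi argument for maximal H\"older regularity of analytic semigroups, which is exactly what the paper invokes: the paper does not give its own proof but states that it ``is identical to the proof of Theorem~4.5 in \cite{SE}''. Your splitting of the convolution, use of the sectorial bounds \eqref{2ndanalyticsemigroupbound}, and treatment of the increment via the $D_{\mathcal{P}^*}(\alpha,\infty)$ norm are precisely the steps of that reference, so there is nothing to add.
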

Theorem \ref{2ndlinear-parabolic-equation} is a maximal regularity result for linear autonomous evolution equations of parabolic type and its proof is identical to the proof of Theorem 4.5 in  \cite{SE}. 

\subsection*{Estimates of Linearisation Error}
	We are going to use Theorem \ref{2ndlinear-parabolic-equation} to prove the existence of a strict solution to the coupled system, which is Theorem \ref{2ndcp-sys}. Before proving Theorem \ref{2ndcp-sys},  we require certain estimates for the error of the linearisation $F$, which are given in Lemma \ref{2ndRHSMax}. 

\begin{lem}\label{2ndRHSMax}
	Let $F(\tilde{u})$ and $\mathcal{P}^*$ be defined by \eqref{2ndF-def} and \eqref{2ndlinearopt} respectively and fix $T\in(0, T_0)$. If  $\tilde{u}$,  $q\in C^\alpha([0, T]; B_{H^2}(\tilde{u}_0, r))$, with
	$u_0=\tilde{u}_0+\theta_1$, then there exist  constants $L_A=L_A\left(u_0, v_0, w_0, \Omega\right)>0$ and
	$L_B=L_B\left(u_0, v_0, w_0, \Omega, \alpha, T_0, L_U, L_W, L_M\right)>0$, such that for $ 0\leq t<t+h\leq T$,
	\be\label{2ndMax-I}
	\left\|\left[F(\tilde{u})\right](t+h)-\left[F(\tilde{u})\right](t)\right\|_{L^2(\Omega)}\leq \left\{[\tilde{u}+\theta_1]_{C^\alpha\left(\left[0, T\right]; H^2(\Omega)\right)}+L_U\right\}L_Ah^\alpha,
	\ee
	and
	\be\begin{split}
		&\left\|\left[F'(\tilde{u})q\right](t+h)-\left[F'(\tilde{u})q\right](t)-\mathcal{P}^*\left[q(t+h)-q(t)\right]\right\|_{L^2(\Omega)}\\
		\leq& h^\alpha T^\alpha L_{B}\left\|q\right\|_{C^\alpha([0, T]; H^2(\Omega))}+h^\alpha T^\alpha L_{B}\left\|\tilde{u}+\theta_1\right\|_{C^\alpha([0, T]; H^2(\Omega))}\left\|q\right\|_{C^\alpha([0, T]; H^2(\Omega))}\\
		+&h^\alpha L_B\sup_{t\in[0, T]}\left\|q(t)\right\|_{H^2(\Omega)}+h^\alpha L_B\left\|\tilde{u}+\theta_1\right\|_{C^\alpha([0, T]; H^2(\Omega))}\sup_{t\in[0, T]}\left\|q(t)\right\|_{H^2(\Omega)}.\label{2ndMax-II}
	\end{split}\ee
	Here  $L_U$, $L_W$ and $L_M$ are given by Corollary \ref{2Holdercontinuity}, Theorem \ref{2nd-cpl} and Corollary \ref{2Holder-Frechet-W-I-Cor} respectively.
\end{lem}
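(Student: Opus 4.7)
The plan is to handle the two estimates \eqref{2ndMax-I} and \eqref{2ndMax-II} separately. Both will rely on the Hölder estimates of $w(u)$, $v(u)$ from Corollary~\ref{2Holdercontinuity} and of $w'(u)q$, $v'(u)q$ from Corollary~\ref{2Holder-Frechet-W-I-Cor}, together with the $H^1$-algebra property (Lemma~\ref{alg}), the Sobolev embedding $H^1(\Omega)\hookrightarrow L^\infty(\Omega)$ (valid since $\Omega\subset\mathds{R}$), and the uniform lower bound $[w(u)](t)\geq\kappa/2$ on $[0,T]$ provided by Lemma~\ref{2ndestimates}.

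For \eqref{2ndMax-I}, I would abbreviate $u=\tilde u+\theta_1$, $w=w(u)$, $v=v(u)$ and rewrite $F(\tilde u)=w^{-1}\partial_x(w^3u\,\partial_x\tilde u)-w^{-1}vu$. I then expand the time increment $[F(\tilde u)](t+h)-[F(\tilde u)](t)$ as a telescoping sum in which exactly one of the ingredients $u$, $w$, $v$, $\partial_x\tilde u$ is replaced at a time. Every resulting piece contains exactly one Hölder-in-time difference, bounded either by $h^\alpha[\tilde u+\theta_1]_{C^\alpha([0,T];H^2)}$ (from the hypothesis) or by $h^\alpha L_U$ (from Corollary~\ref{2Holdercontinuity}), while the remaining factors—including $1/w$ and $1/w^2$—are uniformly bounded in $H^1$ via Lemma~\ref{2ndestimates} and the lower bound on $w$. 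Collecting the pieces yields \eqref{2ndMax-I}.

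For \eqref{2ndMax-II}, the key rewriting is, by linearity of $\mathcal P^*$,
\[
[F'(\tilde u)q](t+h)-[F'(\tilde u)q](t)-\mathcal P^*[q(t+h)-q(t)]=E(t+h)-E(t),\qquad E(s):=[F'(\tilde u)q](s)-\mathcal P^*q(s).
\]
Comparing \eqref{2ndFre-F} and \eqref{2ndlinearopt} and using $[w'(u_0)q](0)=[v'(u_0)q](0)=0$, one checks that $E(0)=0$ and $E(s)$ decomposes into (a) the terms of $F'(\tilde u)q(s)$ whose coefficients match those of $\mathcal P^*$ up to replacing $u_0,w_0,v_0$ by $u(s),w(s),v(s)$, minus $\mathcal P^*q(s)$ itself—so that each contribution has the schematic form $a(s)\cdot b(s)$ with $a(s)=\mathrm{coeff}(s)-\mathrm{coeff}_0$ and $a(0)=0$—and (b) the remaining terms proportional to $[w'(u)q](s)$ or $[v'(u)q](s)$, which again vanish at $s=0$. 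On every summand I would apply the standard split
\[
a(t+h)b(t+h)-a(t)b(t)=a(t+h)[b(t+h)-b(t)]+[a(t+h)-a(t)]b(t).
\]
The first piece is bounded by $\|a(t+h)-a(0)\|\cdot\|b(t+h)-b(t)\|=O(T^\alpha)\cdot O(h^\alpha)$, the $T^\alpha$-factor coming from Corollary~\ref{2Holdercontinuity} (for $w,v$-coefficients), from the hypothesis on $\tilde u$ (contributing the $\|u+\theta_1\|_{C^\alpha}$-factor), or from Corollary~\ref{2Holder-Frechet-W-I-Cor} applied on $[0,t+h]$ (for $w'(u)q$, $v'(u)q$). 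The second piece is $\|a(t+h)-a(t)\|\cdot\|b(t)\|=O(h^\alpha)\cdot O(1)$, producing the $h^\alpha\cdot\sup\|q\|_{H^2}$-type contributions. The four summand types on the right of \eqref{2ndMax-II} then correspond exactly to whether the coefficient involves $u$ (adding $\|u+\theta_1\|_{C^\alpha}$) and to whether $b$ enters through its $C^\alpha$-norm or its $\sup$-norm.

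I expect the main obstacle to be the bookkeeping rather than any single inequality: \eqref{2ndFre-F} contains five distinct terms, each producing several pieces under the above splitting, and every piece must land in one of the four summand types of \eqref{2ndMax-II}. The structural identity that makes the matching possible is precisely $F'(\tilde u_0)q(0)=\mathcal P^*q(0)$, equivalent to $[w'(u_0)q](0)=[v'(u_0)q](0)=0$; it is this vanishing at $s=0$ that equips each piece of $E$ with an automatic $T^\alpha$-smallness (via either Corollary~\ref{2Holdercontinuity} or Corollary~\ref{2Holder-Frechet-W-I-Cor}) and rules out any stray $O(1)$ contribution as $h\to 0$.
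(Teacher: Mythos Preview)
Your proposal is correct and follows essentially the same route as the paper: for \eqref{2ndMax-I} the paper telescopes $[F(\tilde u)](t+h)-[F(\tilde u)](t)$ and bounds each piece via Lemma~\ref{2ndestimates} and Corollary~\ref{2Holdercontinuity} (invoking the arguments of Lemma~\ref{2ndLip-nonlinearity}), while for \eqref{2ndMax-II} it writes out the full expression \eqref{2ndFre-F-4} and applies exactly the product split $a(t+h)b(t+h)-a(t)b(t)=a(t+h)[b(t+h)-b(t)]+[a(t+h)-a(t)]b(t)$ you describe, using $[w(u)](0)=w_0$, $[v(u)](0)=v_0$, $[w'(u)q](0)=[v'(u)q](0)=0$ to extract the $T^\alpha$ factors. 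The only cosmetic difference is that the paper groups terms explicitly rather than packaging them as $E(s)=[F'(\tilde u)q](s)-\mathcal{P}^*q(s)$; your observation that $E(0)=0$ (which also uses $u(0)=u_0$, implicit in the paper's estimate \eqref{2ndB27} as well) is precisely the structural point driving both arguments.
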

\begin{proof}
	Let $T\in(0, T_0)$. 	
	According to Theorem \ref{2nd-solu-thm} and Corollary \ref{2Holdercontinuity}, the semilinear evolution equation \eqref{2nd-SWE-1} exists a unique mild solution $(\tilde{v}, \tilde{w})\in Z(T)\cap C^\alpha\left(\left[0, T\right]; L^2(\Omega)\times H_0^1(\Omega)\right)$  provided $\tilde{u}\in C^\alpha\left([0, T]; B_{H^2}(\tilde{u}_0, r)\right)$ for all $r\in\left(0, \frac{\kappa}{2C}\right)$. Here $\kappa=\inf_{x\in{\Omega}}\tilde{w}_0+\theta_2$ and $C=C(\Omega)$ is a constant depending on $\Omega$. 
	Recall that 
	$u_0=\tilde{u}_0+\theta_1$, $v_0=\tilde{v}_0$, $w_0=\tilde{w}_0+\theta_2$, $u=\tilde{u}+\theta_1$, $v=\tilde{v}$, $w=\tilde{w}+\theta_2$. It follows that the solution operators $u\mapsto v$ and $u\mapsto w$ have the following properties:
	\be\label{2ndB-Holder-3}
	\begin{split}
		u\mapsto v:  C^\alpha\left([0, T]; B_{H^2}(u_0, r)\right)\to C^\alpha\left([0, T]; B_{L^2}(v_0, r)\right),\\ 
u\mapsto w:  C^\alpha\left([0, T]; B_{H^2}(u_0, r)\right)\to C^\alpha\left([0, T]; B_{H^1}(w_0, r)\right) , \\
		\left\|v(t+h)-v(t)\right\|_{L^2(\Omega)}\leq L_Uh^\alpha,\quad \left\|w(t+h)-w(t)\right\|_{H^1(\Omega)}\leq L_Uh^\alpha.
	\end{split}	
	\ee
	Hence
	\[F\left(\tilde{u}\right)=\frac{1}{w}\frac{\partial}{\partial x}\left(w^3(\tilde{u}+\theta_1)\frac{\partial\tilde{u}}{\partial x}\right)-\frac{v}{w}(\tilde{u}+\theta_1)\in C^\alpha\left([0, T]; L^2(\Omega)\right). \]
	We next prove assertion \eqref{2ndMax-I} of Lemma \ref{2ndRHSMax}.
	
	Let $h\in(0, T]$ be such that $0\leq t<t+h\leq T$. As $\left\|u(s)\right\|_{H^2(\Omega)}\leq\tilde{C_1}$, ,  $\left\|v(s)\right\|_{L^2(\Omega)}\leq\tilde{C_2}$ and 	$\left\|w(s)\right\|_{H^1(\Omega)}\leq\tilde{C}$ for all $s\in[0, T]$, where $\tilde{C}=\left\|w_0\right\|_{H^1(\Omega)}+{\kappa}/(2C)$, $\tilde{C_1}=\left\|u_0\right\|_{H^2(\Omega)}+{\kappa}/(2C)$, $\tilde{C_2}=\left\|v_0\right\|_{L^2(\Omega)}+{\kappa}/(2C)$.	
	Because  $H^1(\Omega)$ is an algebra,  estimate \eqref{2ndC-a} of Lemma \ref{2ndestimates}, estimate \eqref{2Holdercontinuityformular} from Corollary \ref{2Holdercontinuity} and estimate in \eqref{2ndB-Holder-3},  we obtain
	\begin{align}
		\left\|[w(t+h)]^{-1}-[w(t)]^{-1}\right\|_{H^1(\Omega)}\leq C_1^2L_Uh^\alpha,\quad \left\|[w(t+h)]^3-[w(t)]^3\right\|_{H^1(\Omega)}\leq&3\tilde{C}^2L_Uh^\alpha . \label{2ndB8}
	\end{align}
	Similarly, for $u\in C^\alpha\left([0, T]; B_{H^2}(u_0, r)\right)$, we get
	\begin{align}
		\left\|[u(t+h)]^2-[u(t)]^2\right\|_{H^2(\Omega)}\leq2\tilde{C}_1\left[u\right]_{C^\alpha([0, T]; H^2(\Omega))}h^\alpha.\label{2ndB8-1}
	\end{align}
	The arguments of the proof of Lemma \ref{2ndLip-nonlinearity} give that \eqref{2ndMax-I} of Lemma \ref{2ndRHSMax} holds by
	\begin{equation}
		\left\|\left[F(\tilde{u})\right](t+h)-\left[F(\tilde{u})\right](t)\right\|_{L^2(\Omega)}
		\leq L_AL_Uh^\alpha+L_A[u]_{C^\alpha\left(\left[0, T\right]; H^2(\Omega)\right)}h^\alpha\label{2ndB15}.
	\end{equation}
	Here $L_A$ is a constant depending on $C$, $C_1$, $\tilde{C}$, $\tilde{C}_1$ and $\tilde{C}_2$.	
	
	We next prove the assertion \eqref{2ndMax-II} of Lemma \ref{2ndRHSMax}.		
	For $q\in C^\alpha([0, T]; B_{H^2}(\tilde{u}_0, r))$ and $t\in [0, T]$, we note $w(t)=[w(u)](t)$, $v(t)=[v(u)](t)$. From the definition \eqref{2ndFre-F} of the Frech\'{e}t derivative of $F(\tilde{u})$ on $\tilde{u}$ at $t$ and the definition \eqref{2ndlinearopt} of $\mathcal{P}^*q(t)$, we have, for $h\in (0, T]$ such that $t+h\in (0, T]$,
	\be\begin{split}
		&\left[F'(\tilde{u})q\right](t+h)-\left[F'(\tilde{u})q\right](t)-\mathcal{P}^*\left(q(t+h)-q(t)\right)\\
		=&\frac{1}{w(t+h)}\frac{\partial}{\partial x}\left\{[w(t+h)]^3u(t+h)\frac{\partial q(t+h)}{\partial x}+[w(t+h)]^3\frac{\partial u(t+h)}{\partial x} q(t+h)\right\}\\
		-&\frac{1}{w(t)}\frac{\partial}{\partial x}\left\{[w(t)]^3u(t)\frac{\partial q(t)}{\partial x}+[w(t)]^3\frac{\partial u(t)}{\partial x}q(t)\right\}\\
		+&\frac{3}{2w(t+h)}\frac{\partial}{\partial x}\left\{\frac{\partial[u(t+h)]^2}{\partial x}[w(t+h)]^2[w'(u)q](t+h)\right\}\\
		-&\frac{3}{2w(t)}\frac{\partial}{\partial x}\left\{\frac{\partial[u(t)]^2}{\partial x}[w(t)]^2[w'(u)q](t)\right\}\\
		-&\frac{[w'(u)q](t+h)}{2[w(t+h)]^2}\frac{\partial}{\partial x}\left([w(t+h)]^3\frac{\partial[u(t+h)]^2}{\partial x}\right)-\frac{v(t+h)}{w(t+h)}q(t+h)\\
		+&\frac{[w'(u)q](t)}{2[w(t)]^2}\frac{\partial}{\partial x}\left([w(t)]^3\frac{\partial[u(t)]^2}{\partial x}\right)+\frac{v(t)}{w(t)}q(t)\\
		-&\frac{w(t+h)[v'(u)q](t+h)-v(t+h)[w'(u)q](t+h)}{[w(t+h)]^2}u(t+h)\\
		+&\frac{w(t)[v'(u)q](t)-v(t)[w'(u)q](t)}{[w(t)]^2}u(t)
		-\frac{1}{w_0}\frac{\partial}{\partial x}\left\{w_0^3u_0\frac{\partial q(t+h)}{\partial x}+w_0^3\frac{\partial u_0}{\partial x} q(t+h)\right\}\\
		-&\frac{v_0}{w_0}q(t+h)+\frac{1}{w_0}\frac{\partial}{\partial x}\left\{w_0^3u_0\frac{\partial q(t)}{\partial x}+w_0^3\frac{\partial u_0}{\partial x} q(t)\right\}+\frac{v_0}{w_0}q(t). \label{2ndFre-F-4}
	\end{split}\ee
	Observe that
	\begin{align}
		\left\|[w(t+h)]^3u(t+h)-[w(t)]^3u(t)\right\|_{H^1(\Omega)}
		\leq&\left\|[w(t+h)]^3-[w(t)]^3\right\|_{H^1(\Omega)}\left\|u(t+h)\right\|_{H^1(\Omega)}\notag\\
		+&\left\|[w(t)]^3\right\|_{H^1(\Omega)}\left\|u(t+h)-u(t)\right\|_{H^1(\Omega)}\notag\\
		\leq&h^\alpha3L_U\tilde{C}^2\tilde{C}_1+h^\alpha\tilde{C}^3\left\|u\right\|_{C^\alpha([0, T]; H^2(\Omega))}\label{2ndw3u},
	\end{align}
	with this, the algebraic properties of $H^1(\Omega)$, i.e.~Lemma \ref{alg}, inequalities \eqref{2ndalg-1-2}, \eqref{alg-1-3} of Lemma \ref{2ndalg-1} and the assertion \eqref{2ndC-a} of    Lemma \ref{2ndestimates} imply that
	\begin{align}
		&\left\|\left(\frac{1}{w(t+h)}-\frac{1}{w(t)}\right)\frac{\partial}{\partial x}\left\{[w(t+h)]^3u(t+h)\frac{\partial q(t+h)}{\partial x} \right\}\right\|_{L^2(\Omega)}\notag\\
		\leq&C\left\|[w(t+h)]^{-1}-[w(t)]^{-1}\right\|_{H^1(\Omega)}\left\|w(t+h)^3\right\|_{H^1(\Omega)}\left\|u(t+h)\right\|_{H^2(\Omega)}\left\|q(t+h)\right\|_{H^2(\Omega)}\notag\\
		\leq&h^\alpha CC_1^2L_U\tilde{C}^3\tilde{C}_1\sup_{t\in[0, T]}\left\|q(t)\right\|_{H^2(\Omega)}\label{2ndB20},
	\end{align}
	\begin{align}
		&\left\|\frac{1}{w(t)}\frac{\partial}{\partial x}\left\{\left([w(t+h)]^3u(t+h)-[w(t)]^3u(t)\right)\frac{\partial q(t+h)}{\partial x} \right\}\right\|_{L^2(\Omega)}\notag\\
		\leq&C\left\|[w(t)]^{-1}\right\|_{H^1(\Omega)}\left\|[w(t+h)]^3u(t+h)-[w(t)]^3u(t)\right\|_{H^1(\Omega)}\left\|q(t+h)\right\|_{H^1(\Omega)}\notag\\
		\leq&h^\alpha3CC_1L_U\tilde{C}^2\tilde{C}_1\sup_{t\in[0, T]}\left\|q(t)\right\|_{H^2(\Omega)}+h^\alpha CC_1\tilde{C}^3\left\|u\right\|_{C^\alpha\left([0, T]; H^2(\Omega)\right)}\sup_{t\in[0, T]}\left\|q(t)\right\|_{H^2(\Omega)}\label{2ndBb20}.
	\end{align}
	Hence, we deduce the estimates
	\begin{align}
		&\left\|\frac{1}{w(t+h)}\frac{\partial}{\partial x}\left\{[w(t+h)]^3u(t+h)\frac{\partial q(t+h)}{\partial x}\right\}-\frac{1}{w(t)}\frac{\partial}{\partial x}\left\{[w(t)]^3u(t)\frac{\partial q(t+h)}{\partial x}\right\}\right\|_{L^2(\Omega)}\notag\\
		\leq&\left\|\left(\frac{1}{w(t+h)}-\frac{1}{w(t)}\right)\frac{\partial}{\partial x}\left\{[w(t+h)]^3u(t+h)\frac{\partial q(t+h)}{\partial x} \right\}\right\|_{L^2(\Omega)}\notag\\
		+&\left\|\frac{1}{w(t)}\frac{\partial}{\partial x}\left\{\left([w(t+h)]^3u(t+h)-[w(t)]^3u(t)\right)\frac{\partial q(t+h)}{\partial x}\right\}\right\|_{L^2(\Omega)}\notag\\
		\leq&h^\alpha C\left[C_1^2L_U\tilde{C}^3\tilde{C}_1+3C_1L_U\tilde{C}^2\tilde{C}_1+C_1\tilde{C}^3\left\|u\right\|_{C^\alpha\left([0, T]; H^2(\Omega)\right)}\right]\sup_{t\in[0, T]}\left\|q(t)\right\|_{H^2(\Omega)}\label{2ndB22},
	\end{align} and
	\begin{align}
		&\left\|\frac{1}{w(t)}\frac{\partial}{\partial x}\left([w(t)]^3u(t)\frac{\partial}{\partial x}\left[q(t+h)-q(t)\right]\right)-\frac{1}{w_0}\frac{\partial}{\partial x}\left(w_0^3u_0\frac{\partial}{\partial x}\left[q(t+h)-q(t)\right]\right)\right\|_{L^2(\Omega)}\notag\\
		\leq&\left\|\frac{1}{w(t)}\frac{\partial}{\partial x}\left(\left\{[w(t)]^3u(t)-w_0^3u_0\right\}\frac{\partial}{\partial x}\left[q(t+h)-q(t)\right]\right)\right\|_{L^2(\Omega)}\notag\\
		+&\left\|\left(\frac{1}{w(t)}-\frac{1}{w_0}\right)\frac{\partial}{\partial x}\left(w_0^3u_0\frac{\partial}{\partial x}\left[q(t+h)-q(t)\right]\right)\right\|_{L^2(\Omega)}\notag\\
		\leq&C\left\|[w(t)]^{-1}\right\|_{H^1(\Omega)}\left\|[w(t)]^3u(t)-w_0^3u_0\right\|_{H^1(\Omega)}\left\| q(t+h)- q(t)\right\|_{H^2(\Omega)}\notag\\
		+&C\left\|[w(t)]^{-1}-[w_0]^{-1}\right\|_{H^1(\Omega)}\left\|w_0\right\|_{H^1(\Omega)}^3\left\|u_0\right\|_{H^2(\Omega)}\left\| q(t+h)- q(t)\right\|_{H^2(\Omega)}\notag\\
		\leq&h^\alpha T^\alpha CC_1\left(3L_U\tilde{C}^2\tilde{C}_1+\tilde{C}^3\left\|u\right\|_{C^\alpha([0, T]; H^2(\Omega))}\right)\left\|q\right\|_{C^\alpha([0, T];H^2(\Omega))}\notag\\
		+&h^\alpha T^\alpha C_1L_U\tilde{C}^3\tilde{C}_1\left\|q\right\|_{C^\alpha([0, T];H^2(\Omega))}\label{2ndB27}.
	\end{align}
Therefore, the triangle inequality, \eqref{2ndB22} and \eqref{2ndB27} imply, 	with a constant $V_1$ which depends on $C$, $C_1$, $\tilde{C}$, $\tilde{C}_1$, and $L_U$: 
	\begin{align}
		&\bigg\|\frac{1}{w(t+h)}\frac{\partial}{\partial x}\left\{[w(t+h)]^3u(t+h)\frac{\partial q(t+h)}{\partial x}\right\}-\frac{1}{w(t)}\frac{\partial}{\partial x}\left\{[w(t)]^3u(t)\frac{\partial q(t)}{\partial x}\right\}\notag\\
		&-\frac{1}{w_0}\frac{\partial}{\partial x}\left\{w_0^3u_0\frac{\partial q(t+h)}{\partial x}\right\}+\frac{1}{w_0}\frac{\partial}{\partial x}\left\{w_0^3u_0\frac{\partial q(t)}{\partial x}\right\}\bigg\|_{L^2(\Omega)}\notag\\
		\leq&h^\alpha V_1\sup_{t\in[0, T]}\left\|q(t)\right\|_{H^2(\Omega)}+h^\alpha V_1\left\|u\right\|_{C^\alpha\left([0, T];H^2(\Omega)\right)}\sup_{t\in[0, T]}\left\|q(t)\right\|_{H^2(\Omega)}\notag\\
		+&h^\alpha T^\alpha V_1\left\|u\right\|_{C^\alpha([0,T];H^2(\Omega))}\left\|q\right\|_{C^\alpha([0,T];H^2(\Omega))}+h^\alpha T^\alpha V_1\left\|q\right\|_{C^\alpha([0, T];H^2(\Omega))}\label{2ndB0}.
	\end{align}
	Similarly,
	\begin{align}
		&\bigg\|\frac{1}{w(t+h)}\frac{\partial}{\partial x}\left\{[w(t+h)]^3q(t+h)\frac{\partial u(t+h)}{\partial x}\right\}-\frac{1}{w(t)}\frac{\partial}{\partial x}\left\{[w(t)]^3q(t)\frac{\partial u(t)}{\partial x}\right\}\notag\\
		&-\frac{1}{w_0}\frac{\partial}{\partial x}\left\{w_0^3q(t+h)\frac{\partial u_0}{\partial x}\right\}+\frac{1}{w_0}\frac{\partial}{\partial x}\left\{w_0^3q(t)\frac{\partial u_0}{\partial x}\right\}\bigg\|_{L^2(\Omega)}\notag\\
		\leq&h^\alpha V_1\sup_{t\in[0, T]}\left\|q(t)\right\|_{H^2(\Omega)}+h^\alpha V_1\left\|u\right\|_{C^\alpha\left([0, T];H^2(\Omega)\right)}\sup_{t\in[0, T]}\left\|q(t)\right\|_{H^2(\Omega)}\notag\\
		+&h^\alpha T^\alpha V_1\left\|q\right\|_{C^\alpha([0, T]; H^2(\Omega))}+h^\alpha T^\alpha V_1\left\|u\right\|_{C^\alpha\left([0, T]; H^2(\Omega)\right)}\left\|q\right\|_{C^\alpha([0, T]; H^2(\Omega))}.\label{2ndB30-1}
	\end{align}
	Set $\tilde{C}_3=C\left[C_1L_U+\tilde{C}_2C_1^2L_U\right]$, $\tilde{C}_4=CC_1\left[L_U+\left\|v_0\right\|_{L^2(\Omega)}\left\|w_0^{-1}\right\|_{H^2(\Omega)}\right]$. The triangle inequality, algebraic properties of Sobolev spaces, i.e.~\eqref{2ndalg-1-1} of Lemma \ref{2ndalg-1}, \eqref{2ndC-a} of    Lemma \ref{2ndestimates}, and the assertion \eqref{2Holdercontinuityformular} of Corollary \ref{2Holdercontinuity} imply the estimate
	\begin{align}
		\left\|-\frac{v(t+h)}{w(t+h)}q(t+h)+\frac{v(t)}{w(t)}q(t)-\frac{v_0}{w_0}q(t+h)+\frac{v_0}{w_0}q(t)\right\|_{L^2(\Omega)}
		\leq& h^\alpha \tilde{C}_3\sup_{t\in[0, T]}\left\|q(t)\right\|_{H^2(\Omega)}\notag\\
		+&h^\alpha T^\alpha \tilde{C}_4\left\|q\right\|_{C([0, T]; H^2(\Omega))}\notag.
	\end{align}
	We now combine \eqref{2bound-Fre-W}, \eqref{2Holder-Frechet-W-I} in Corollary \ref{2Holder-Frechet-W-I-Cor}, \eqref{2Holdercontinuityformular} in Corollary \ref{2Holdercontinuity} with the above arguments for estimate \eqref{2ndB0}. With a constant $V_2$ which depends on $C$, $C_1$ $C_2$, $\tilde{C}$, $\tilde{C}_1$, $\tilde{C}_2$, $L_U$, $L_W$, $L_M$ and $T_0^{1-\alpha}$,  we deduce
	\begin{align}
		&\frac{3}{2}\bigg\|\frac{1}{w(t+h)}\frac{\partial}{\partial x}\left\{[w(t+h)]^2[w'(u)q](t+h)\frac{\partial[u(t+h)]^2}{\partial x}\right\}\notag\\
		&-\frac{1}{w(t)}\frac{\partial}{\partial x}\left\{[w(t)]^2[w'(u)q](t)\frac{\partial[u(t)]^2}{\partial x}\right\}\bigg\|_{L^2(\Omega)}\notag\\
		\leq&h^\alpha T^\alpha V_2\left\|q\right\|_{C^\alpha([0, T]; H^2(\Omega))}
		+h^\alpha V_2\left(1+\left\|u\right\|_{C^\alpha([0, T]; H^2(\Omega))}\right)\sup_{t\in[0, T]}\left\|q(t)\right\|_{H^2(\Omega)}\notag,
	\end{align}
	\begin{align}
		&\left\|\frac{[w'(u)q](t+h)}{2[w(t+h)]^2}\frac{\partial}{\partial x}\left\{[w(t+h)]^3\frac{\partial[u(t+h)]^2}{\partial x}\right\}-\frac{[w'(u)q](t)}{2[w(t)]^2}\frac{\partial}{\partial x}\left\{[w(t)]^3\frac{\partial[u(t)]^2}{\partial x}\right\}\right\|_{L^2(\Omega)}\notag\\
		\leq&  h^\alpha V_2\left(1+\|u\|_{C^\alpha\left(\left[0, T\right]; H^2(\Omega)\right)}\right)\sup_{t\in[0, T]}\|q(t)\|_{H^2(\Omega)}
		+h^\alpha T^\alpha V_2\left\|q\right\|_{C^\alpha([0, T]; H^2(\Omega))}\notag,
	\end{align}
	\begin{align}
		&\bigg\|-\frac{w(t+h)[v'(u)q](t+h)-v(t+h)[w'(u)q](t+h)}{[w(t+h)]^2}u(t+h)\notag\\
		&+\frac{w(t)[v'(u)q](t)-v(t)[w'(u)q](t)}{[w(t)]^2}u(t)\bigg\|_{L^2(\Omega)}\notag\\
		\leq&h^\alpha V_2\left(1+\left\|u\right\|_{C^\alpha([0, T]; H^2(\Omega))}\right)\sup_{t\in[0, T]}\left\|q(t)\right\|_{H^2(\Omega)}+h^\alpha T^\alpha V_2\left\|q\right\|_{C^\alpha([0, T]; H^2(\Omega))}\notag.
	\end{align}
	Consequently, by setting $L_B=V_1+V_2+\tilde{C}_3+\tilde{C}_4$, we obtain
	\begin{align}
		&\left\|\left[F'(\tilde{u})q\right](t+h)-\left[F'(\tilde{u})q\right](t)-\mathcal{P}^*\left[q(t+h)-q(t)\right]\right\|_{L^2(\Omega)}\notag\\
		\leq&h^\alpha L_B\sup_{t\in[0, T]}\left\|q(t)\right\|_{H^2(\Omega)}+h^\alpha L_B\left\|u\right\|_{C^\alpha\left([0, T];H^2(\Omega)\right)}\sup_{t\in[0, T]}\left\|q(t)\right\|_{H^2(\Omega)}\notag\\
		+&h^\alpha T^\alpha L_B\left\|q\right\|_{C^\alpha([0, T];H^2(\Omega))}+h^\alpha T^\alpha L_B\left\|u\right\|_{C^\alpha([0, T]; H)}\left\|q\right\|_{C^\alpha([0, T]; H^2(\Omega))}\notag.
	\end{align}
	Hereby, \eqref{2ndMax-II} is proved and this concludes the proof of Lemma \ref{2ndRHSMax}.
\end{proof}

\subsection*{Proof of Theorem \ref{2ndcoupled system}}

\begin{thm}\label{2ndcp-sys}
	Assume that the initial value $u_0\in\left\{\psi\in H^{2+\sigma}(\Omega):\ \psi(x)=\theta_1,\ x\in\partial\Omega\right\}$ is given for $\sigma\in\left(0, \frac{1}{2}\right)$ such that the compatibility condition
	\[\frac{1}{w_0}\frac{\partial}{\partial x}\left(w_0^3u_0\frac{\partial u_0}{\partial x}\right)\in H^\sigma(\Omega)\subseteq D_{\mathcal{P}^*}(\alpha, \infty)\]
	holds for $\alpha\in\left(0,  \frac{\sigma}{2}\right)$ and $\tilde{u}_0=u_0-\theta_1\in H^{2+\sigma}(\Omega)\cap H_0^1(\Omega)$. 
	
	Then there exists $T_1>0$, such that the nonlinear problem \eqref{2ndNPE} has a unique strict solution $\tilde{u}\in C^\alpha\left([0, T_1); H^2(\Omega)\cap H_0^1(\Omega)\right)\cap C^{\alpha+1}\left([0, T_1); L^2(\Omega)\right)$ and $\tilde{u}'(t)\in D_{\mathcal{P}^*}(\alpha, \infty)$ for all $ t\in[0, T_1)$. \end{thm}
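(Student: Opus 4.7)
The plan is to prove Theorem \ref{2ndcp-sys} by a contraction mapping argument applied to the nonlinear map $\Gamma$ from \eqref{2nd0}. Writing $\mathcal{F}(\tilde{u})(t):=[F(\tilde{u})](t)-\mathcal{P}^{*}\tilde{u}(t)$, I would seek a fixed point of
\[
\Gamma(\tilde{u})(t)=e^{t\mathcal{P}^{*}}\tilde{u}_{0}+\int_{0}^{t}e^{(t-s)\mathcal{P}^{*}}\mathcal{F}(\tilde{u})(s)\,ds
\]
on the closed subset
\[
Y_{T_{1}}:=\bigl\{\tilde{u}\in C^{\alpha}([0,T_{1}];H^{2}(\Omega)\cap H_{0}^{1}(\Omega)):\ \tilde{u}(0)=\tilde{u}_{0},\ \|\tilde{u}-\tilde{u}_{0}\|_{C^{\alpha}([0,T_{1}];H^{2}(\Omega))}\leq\rho\bigr\}
\]
with $\rho<r$ chosen small enough that any $\tilde{u}\in Y_{T_{1}}$ lies in $C^{\alpha}([0,T_{1}];B_{H^{2}}(\tilde{u}_{0},r))$, where all hypotheses of Sections \ref{2nd-order problem}--\ref{SolnOp} and of Lemma \ref{2ndRHSMax} are satisfied.

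The first task is to check that $\Gamma$ is well-defined from $Y_{T_{1}}$ into $C^{\alpha}([0,T_{1}];D(\mathcal{P}^{*}))\cap C^{\alpha+1}([0,T_{1}];L^{2}(\Omega))$ by invoking the maximal regularity result Theorem \ref{2ndlinear-parabolic-equation}. Estimate \eqref{2ndMax-I} and boundedness of $\mathcal{P}^{*}:H^{2}\cap H_{0}^{1}\to L^{2}(\Omega)$ imply $\mathcal{F}(\tilde{u})\in C^{\alpha}([0,T_{1}];L^{2}(\Omega))$ with norm controlled by the data. The compatibility hypothesis forces $w_{0}^{-1}\partial_{x}(w_{0}^{3}u_{0}\partial_{x}u_{0})\in H^{\sigma}(\Omega)\hookrightarrow D_{\mathcal{P}^{*}}(\alpha,\infty)$, while the remaining zero-time contribution $v_{0}u_{0}/w_{0}$ lies in $H^{1}(\Omega)\hookrightarrow D_{\mathcal{P}^{*}}(\alpha,\infty)$ by the algebra property of $H^{1}(\Omega)$ together with $1/w_{0}\in H^{2}$; together these give $\mathcal{F}(\tilde{u})(0)+\mathcal{P}^{*}\tilde{u}_{0}=F(\tilde{u}_{0})\in D_{\mathcal{P}^{*}}(\alpha,\infty)$. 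Theorem \ref{2ndlinear-parabolic-equation} then yields the claimed regularity of $\Gamma(\tilde{u})$ together with the bound \eqref{2ndlinear-solu-est}. For the self-mapping property I would exploit that, by standard estimates in real-interpolation scales, $\|e^{t\mathcal{P}^{*}}\tilde{u}_{0}-\tilde{u}_{0}\|_{H^{2}}\lesssim t^{\alpha}\|F(\tilde{u}_{0})\|_{D_{\mathcal{P}^{*}}(\alpha,\infty)}$ and similarly the Duhamel term contributes an $O(T_{1}^{\alpha})$ factor, so that $\|\Gamma(\tilde{u})-\tilde{u}_{0}\|_{C^{\alpha}([0,T_{1}];H^{2})}\to 0$ as $T_{1}\to 0^{+}$ uniformly in $\tilde{u}\in Y_{T_{1}}$; hence $\Gamma(Y_{T_{1}})\subseteq Y_{T_{1}}$ for $T_{1}$ small.

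The main obstacle will be the contraction estimate. For $\tilde{u}_{1},\tilde{u}_{2}\in Y_{T_{1}}$ set $q:=\tilde{u}_{1}-\tilde{u}_{2}$; since both functions take the same initial value, $q(0)=0$, and therefore $\sup_{t\in[0,T_{1}]}\|q(t)\|_{H^{2}}\leq T_{1}^{\alpha}[q]_{C^{\alpha}([0,T_{1}];H^{2})}$. A mean-value argument $F(\tilde{u}_{1})-F(\tilde{u}_{2})=\int_{0}^{1}F'(\xi\tilde{u}_{1}+(1-\xi)\tilde{u}_{2})q\,d\xi$ reduces the Hölder control of $\mathcal{F}(\tilde{u}_{1})-\mathcal{F}(\tilde{u}_{2})$ to estimate \eqref{2ndMax-II}, in which the $\mathcal{P}^{*}[q(t+h)-q(t)]$ subtraction is exactly what is needed to cancel the $\mathcal{P}^{*}q$ contribution from the definition of $\mathcal{F}$. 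All the remaining terms on the right-hand side of \eqref{2ndMax-II} carry either an explicit $T_{1}^{\alpha}$ (in front of the Hölder seminorms) or only a sup norm of $q$, which by the above observation is also $O(T_{1}^{\alpha}[q]_{C^{\alpha}})$, yielding
\[
\|\mathcal{F}(\tilde{u}_{1})-\mathcal{F}(\tilde{u}_{2})\|_{C^{\alpha}([0,T_{1}];L^{2}(\Omega))}\leq C\,T_{1}^{\alpha}\|\tilde{u}_{1}-\tilde{u}_{2}\|_{C^{\alpha}([0,T_{1}];H^{2}(\Omega))}.
\]
Since $\Gamma(\tilde{u}_{1})-\Gamma(\tilde{u}_{2})$ solves the linear maximal regularity problem with zero initial data and forcing $\mathcal{F}(\tilde{u}_{1})-\mathcal{F}(\tilde{u}_{2})$, a further application of Theorem \ref{2ndlinear-parabolic-equation} gives
\[
\|\Gamma(\tilde{u}_{1})-\Gamma(\tilde{u}_{2})\|_{Y_{T_{1}}}\leq I(T_{1})\cdot C\,T_{1}^{\alpha}\,\|\tilde{u}_{1}-\tilde{u}_{2}\|_{Y_{T_{1}}},
\]
which is strictly contractive for $T_{1}$ small. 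The Banach fixed point theorem then produces a unique $\tilde{u}\in Y_{T_{1}}$ with $\tilde{u}=\Gamma(\tilde{u})$, and a final appeal to Theorem \ref{2ndlinear-parabolic-equation} upgrades this fixed point to a strict solution of \eqref{2ndNPE} with $\tilde{u}\in C^{\alpha}([0,T_{1});H^{2}\cap H_{0}^{1})\cap C^{\alpha+1}([0,T_{1});L^{2})$ and $\tilde{u}'(t)\in D_{\mathcal{P}^{*}}(\alpha,\infty)$ for all $t\in[0,T_{1})$, completing the proof of Theorem \ref{2ndcp-sys}.
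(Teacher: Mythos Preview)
Your proposal is essentially the same approach as the paper's: set up the fixed-point map $\Gamma$ on a $C^{\alpha}$-ball around $\tilde{u}_0$, feed the maximal regularity Theorem~\ref{2ndlinear-parabolic-equation}, control $\mathcal{F}(\tilde{u}_1)-\mathcal{F}(\tilde{u}_2)$ via the mean-value formula together with \eqref{2ndMax-II}, use $q(0)=0$ to trade the sup norm of $q$ for $T^{\alpha}[q]_{C^{\alpha}}$, and finally upgrade the fixed point to a strict solution. All key ingredients match.

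One point deserves care. In your self-mapping step you write $\|e^{t\mathcal{P}^{*}}\tilde{u}_0-\tilde{u}_0\|_{H^{2}}\lesssim t^{\alpha}\|F(\tilde{u}_0)\|_{D_{\mathcal{P}^{*}}(\alpha,\infty)}$. That interpolation estimate would require $\mathcal{P}^{*}\tilde{u}_0\in D_{\mathcal{P}^{*}}(\alpha,\infty)$, which is \emph{not} what the compatibility hypothesis gives you; only the combination $\mathcal{P}^{*}\tilde{u}_0+\mathcal{F}(\tilde{u})(0)=F(\tilde{u}_0)$ lies in $D_{\mathcal{P}^{*}}(\alpha,\infty)$. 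Splitting $\Gamma(\tilde{u})-\tilde{u}_0$ into the homogeneous piece and the Duhamel piece therefore does not work directly. The paper avoids this by reversing the order: it first establishes the contraction \eqref{2ndcontractivemap} and then bounds
\[
\|\Gamma\tilde{u}-\tilde{u}_0\|_{C^{\alpha}}\leq \|\Gamma\tilde{u}-\Gamma\tilde{u}_0\|_{C^{\alpha}}+\|\Gamma\tilde{u}_0-\tilde{u}_0\|_{C^{\alpha}}\leq \tfrac{1}{2}\|\tilde{u}-\tilde{u}_0\|_{C^{\alpha}}+\|\Gamma\tilde{u}_0-\tilde{u}_0\|_{C^{\alpha}},
\]
so self-mapping reduces to smallness of the single function $\Gamma\tilde{u}_0-\tilde{u}_0$, which solves the linear problem with zero initial data and forcing $F(\tilde{u}_0)$; here the full compatibility $F(\tilde{u}_0)\in D_{\mathcal{P}^{*}}(\alpha,\infty)$ is exactly what is used. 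With this reordering your argument goes through.
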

\begin{proof}
	Let $\sigma\in\left(0, \frac{1}{2}\right)$, $\alpha\in\left(0, \frac{\sigma}{2}\right)$. We divide the proof into three parts.
	
	\noindent\textbf{H\"{o}lder Continuity.}
	Let us first state  H\"{o}lder continuity results for the solution operators
	$\tilde{u}\to v(\tilde{u}+\theta_1)$ and $\tilde{u}\to w(\tilde{u}+\theta_1)$ in the Section \ref{SolnOp} which will be used below.	
	Take $T\in(0, T_0)$ to be specified later.
	The estimate \eqref{2Holdercontinuityformular} in Corollary \ref{2Holdercontinuity} implies that the solution operators $\tilde{u}\to v(\tilde{u}+\theta_1)$ and $\tilde{u}\to w(\tilde{u}+\theta_1)$  satisfy the mapping properties 
	\[\tilde{u}\mapsto v(\tilde{u}+\theta_1):\quad C^\alpha\left([0, T]; B_{H^2}(\tilde{u}_0, r)\right) \to C^\alpha\left([0, T]; B_{L^2}(v_0, r)\right),\]
	\[\tilde{u}\mapsto w(\tilde{u}+\theta_1):\quad C^\alpha\left([0, T]; B_{H^2}(\tilde{u}_0, r)\right) \to C^\alpha\left([0, T]; B_{H^1}(w_0, r)\right).\]
	Thus, from  inequality \eqref{2ndMax-I} in Lemma \ref{2ndRHSMax},  the nonlinearity $F(\tilde{u})$ from \eqref{2ndF-def} satisfies
	\[F(\tilde{u})\in C^\alpha\left([0, T]; L^2(\Omega)\right).\]
	Due to Theorem \ref{2nd-cpl} and its following discussion of the Fr\'{e}chet derivative, together with the estimate \eqref{2Lip-Frechet-W-I} of Corollary \ref{2Lip-Frechet-W}, we obtain that the Fr\'{e}chet derivative $\left(v'(\tilde{u}+\theta_1)q, w'(\tilde{u}+\theta_1)q\right)$ of the function $\left(v(\tilde{u}+\theta_1), w(\tilde{u}+\theta_1)\right)$ on $\tilde{u}\in C\left([0, T]; H^2(\Omega)\cap H_0^1(\Omega)\right)$ exists in $C\left([0, T]; L^2(\Omega)\times H_0^1(\Omega)\right)$ and depends Lipschitz continuously on $\tilde{u}\in C\left([0, T]; H^2(\Omega)\cap H_0^1(\Omega)\right)$ for $q\in C\left([0, T]; H^2(\Omega)\cap H_0^1(\Omega)\right)$.
	
	 If $\tilde{u}\in C^\alpha\left([0, T]; B_{H^2}(\tilde{u}_0, r)\right)$,  then inequality \eqref{2Holder-Frechet-W-I} in Corollary \ref{2Holder-Frechet-W-I-Cor} implies that  \[\left(v'(\tilde{u}+\theta_1)q, w'(\tilde{u}+\theta_1)q\right)\in C^\alpha\left([0, T]; L^2(\Omega)\times H_0^1(\Omega)\right)\] for all $ q\in C^\alpha\left([0, T]; H^2(\Omega)\cap H_0^1(\Omega)\right)$. Thus, following  \eqref{2ndMax-II} in Lemma \ref{2ndRHSMax},  the Fr\'{e}chet derivative $F'(\tilde{u})q$ of $F(\tilde{u})$ and $\mathcal{P}^*q$ (defined by \eqref{2ndFre-F} and \eqref{2ndlinearopt}, respectively) satisfy
	\[
	F'(\tilde{u})q-\mathcal{P}^*q\in C^\alpha\left([0, T]; L^2(\Omega)\right),\quad \forall\ q\in C^\alpha\left([0, T]; H^2(\Omega)\cap H_0^1(\Omega)\right).
	\]
Now	$v_0\in H_0^1(\Omega)$, $w_0\in H^2(\Omega)$ with $w_0|_{\partial\Omega}=\theta_2$ and the compatibility assumption of Theorem \ref{2ndcp-sys} imply
	\[[F(\tilde{u})](0)=\frac{1}{w_0}\frac{\partial}{\partial x}\left[w_0^3u_0\frac{\partial u_0}{\partial x}\right]-\frac{v_0}{w_0}u_0\in H^\sigma(\Omega)\subseteq D_{\mathcal{P}^*}(\alpha, \infty),\quad \tilde{u}_0\in D(\mathcal{P}^*).\]
	From the definition \eqref{2ndlinearopt} of $\mathcal{P}^*$ and equation \eqref{2ndgraphnorm}, we conclude
	\[D(\mathcal{P}^*)= H^2(\Omega)\cap H_0^1(\Omega),\ \overline{D(\mathcal{P}^*)}=L^2(\Omega),\ C^\alpha\left([0, T]; H^2(\Omega)\cap H_0^1(\Omega)\right)= C^\alpha\left([0, T]; D(\mathcal{P}^*)\right).\]
	\noindent\textbf{Equivalence.}
	We now study the nonlinear problem
	\be\label{2nd2nd-NLP}
	\tilde{u}'(t)=\mathcal{P}^*\tilde{u}(t)+[F(\tilde{u})](t)-\mathcal{P}^*\tilde{u}(t), \quad t\in[0, T],\quad \tilde{u}(0)=\tilde{u}_{0}.
	\ee
	in its integral formulation
	\be\label{2ndintegralform}
	\tilde{u}(t)=e^{t\mathcal{P}^*}\tilde{u}_0+ \int_0^te^{(t-s)\mathcal{P}^*}\left\{[F(\tilde{u})](s)-\mathcal{P}^*\tilde{u}(s)\right\}ds,\quad t\in [0, T].
	\ee
	We will prove that if $\tilde{u}\in C^\alpha\left([0, T]; H^2(\Omega)\cap H_0^1(\Omega)\right)$ satisfies \eqref{2ndintegralform}, $\tilde{u}(t)\in B_{H^2}(\tilde{u}_0, r)$ for all $ t\in [0, T]$, $r\in\left(0, \frac{\kappa}{2C}\right)$ for   $\kappa= \inf_{x\in{\Omega}}w_0(x)$ and $C=C(\Omega)>0$ is a constant, then
	$\tilde{u}\in C^{\alpha+1}\left([0, T]; L^2(\Omega)\right)$, $\tilde{u}'(t)\in D_{\mathcal{P}^*}(\alpha, \infty)$ for all $ t\in[0, T]$, and $\tilde{u}$ satisfies the equation \eqref{2nd2nd-NLP}. 

To prove this assertion, set
	\be\label{2ndpertubation}
	[\mathcal{F}(\tilde{u})](t)=[F(\tilde{u})](t)-\mathcal{P}^*\tilde{u}(t),\quad\forall\ t\in[0, T],
	\ee
for $\tilde{u}\in C^\alpha\left([0, T]; B_{H^2}(\tilde{u}_0, r)\right)$.
	We will show that
	\be\label{2ndHolderRHS-2nd}
	\mathcal{F}(\tilde{u})\in C^\alpha\left([0, T]; L^2(\Omega)\right).
	\ee
	In fact, let $0\leq t<t+h\leq T$. Using \eqref{2ndMax-I} in Lemma \ref{2ndRHSMax}, we observe
	\begin{align}
		\left\|[\mathcal{F}(\tilde{u})](t+h)-[\mathcal{F}(\tilde{u})](t)\right\|_{L^2(\Omega)}\leq&\left\|[F(\tilde{u})](t+h)-[F(\tilde{u})](t)\right\|_{L^2(\Omega)}
		+\left\|\mathcal{P}^*\tilde{u}(t+h)-\mathcal{P}^*\tilde{u}(t)\right\|_{L^2(\Omega)}\notag\\
		\leq&\left(L_A+\left\|\mathcal{P}^*\right\|_{\mathcal{B}\left(H^2(\Omega), L^2(\Omega)\right)}\right)\left\|\tilde{u}(t+h)-\tilde{u}(t)\right\|_{H^2(\Omega)},\notag
	\end{align}
	because $\tilde{u}\in C^\alpha\left([0, T]; H^2(\Omega)\cap H_0^1(\Omega)\right)$, we get \eqref{2ndHolderRHS-2nd}.
	
	In addition, as $\tilde{u}(0)=\tilde{u}_0\in\left\{H^2(\Omega)\cap H_0^1(\Omega)\right\} = D(\mathcal{P}^*)$ and $[F(\tilde{u})](0)\in D_{\mathcal{P}^*}(\alpha, \infty)$, we get
	\[\mathcal{P}^*\tilde{u}_0+[\mathcal{F}(\tilde{u})](0)=[F(\tilde{u})](0)\in D_{\mathcal{P}^*}(\alpha, \infty).\]
	Therefore, using Theorem \ref{2ndlinear-parabolic-equation} and Theorem 1.2 of \cite{LS1}, we obtain that if $\tilde{u}\in C^\alpha\left([0, T]; B_{H^2}(\tilde{u}_0, r)\right)$ is a solution of \eqref{2ndintegralform}, then there exist
	$\tilde{u}'\in C^{\alpha}\left([0, T]; L^2(\Omega)\right)$, $\tilde{u}'(t)\in D_{\mathcal{P}^*}(\alpha, \infty)$ for all $ t\in[0, T]$,
	and $\tilde{u}$ satisfies \eqref{2nd2nd-NLP}.
	
	Conversely, let $\tilde{u}\in C^\alpha\left([0, T]; B_{H^2}(\tilde{u}_0, r)\right)\cap C^{\alpha+1}\left([0, T]; L^2(\Omega)\right)$ satisfy \eqref{2nd2nd-NLP}, i.e.
	\[\tilde{u}'(t)=\mathcal{P}^*\tilde{u}(t)+[\mathcal{F}(\tilde{u})](t), \quad t\in[0, T],\quad \tilde{u}(0)=\tilde{u}_{0}.\]
	As we have proved that $\mathcal{F}(\tilde{u})\in C^\alpha\left([0, T]; L^2(\Omega)\right)$, we can apply  Theorem \ref{2ndlinear-parabolic-equation} again and obtain that $\tilde{u}$ is a solution of the integral equation \eqref{2ndintegralform}.
	
	In conclusion, it is sufficient to solve \eqref{2ndintegralform} in the space $C^\alpha\left([0, T]; B_{{H^2}}(\tilde{u}_0, r)\right)$. To do so, we let $T\in(0, T_0)$ to be specified later and find a fixed point for the mapping 
	\be\label{2ndintegralform2}
	\Gamma: Y\to Y,\quad [\Gamma\tilde{u}](t)=e^{t\mathcal{P}^*}\tilde{u}_0+ \int_0^te^{(t-s)\mathcal{P}^*}\left\{[F(\tilde{u})](s)-\mathcal{P}^*\tilde{u}(s)\right\}ds,\quad t\in [0, T],
	\ee
on the space
	\[Y=\left\{\tilde{u}\in C^\alpha\left([0, T]; H^2(\Omega)\cap H_0^1(\Omega)\right):\ \tilde{u}(0)=\tilde{u}_0,\ \left\|\tilde{u}(\cdot)-\tilde{u}_0\right\|_{C^\alpha([0, T]; H^2(\Omega))}\leq r\right\},\ \forall\ r\in\left(0, \frac{\kappa}{2C}\right).\]
	\noindent\textbf{Contraction Mapping.} The space $Y$ is a metric space in the metric defined from the norm of $C^\alpha\left([0, T]; H^2(\Omega)\right)$. We will show that $\Gamma$ is a contractive mapping of $Y$ into itself provided $T$ is sufficiently small.
	
	Following the preceding results and proceeding as in the proof of Theorem 4.3.1 in Lunardi's work \cite{LA2}, we obtain that $\Gamma u\in C^\alpha\left([0, T]; H^2(\Omega)\cap H_0^1(\Omega)\right)$ if $\tilde{u}\in Y$, as $Y\subset C^\alpha\left([0, T]; B_{H^2}(\tilde{u}_0, r)\right)$. We will show that when $T$ is sufficiently small we have
	\be\label{2ndcontractivemap}
	\left\|\Gamma\tilde{u}_1-\Gamma\tilde{u}_2\right\|_{C^\alpha\left([0, T]; H^2(\Omega)\right)}\leq\frac{1}{2}\left\|\tilde{u}_1-\tilde{u}_2\right\|_{C^\alpha\left([0, T]; H^2(\Omega)\right)},\quad\forall\ \tilde{u}_1,\ \tilde{u}_2\in Y.
	\ee
	From \eqref{2ndpertubation} and \eqref{2ndintegralform2}, we get
	\[[\Gamma\tilde{u}_1](t)-[\Gamma\tilde{u}_2](t)= \int_0^te^{(t-s)\mathcal{P}^*}\left\{[\mathcal{F}(\tilde{u}_1)](s)-[\mathcal{F}(\tilde{u}_2)](s)\right\}ds,\quad t\in[0, T],\]
	hence, by using \eqref{2ndgraphnorm} and applying \eqref{2ndlinear-solu-est} from Theorem \ref{2ndlinear-parabolic-equation}, we obtain
	\begin{align}
		\left\|\Gamma\tilde{u}_1-\Gamma\tilde{u}_2\right\|_{C^\alpha\left([0, T]; H^2(\Omega)\right)}\leq& \gamma_0I(T)\left\|\mathcal{F}(\tilde{u}_1)-\mathcal{F}(\tilde{u}_2)\right\|_{C^\alpha\left([0, T]; L^2(\Omega)\right)}\notag\\
		\leq&\gamma_0I(T_0)\left\|\mathcal{F}(\tilde{u}_1)-\mathcal{F}(\tilde{u}_2)\right\|_{C^\alpha\left([0, T]; L^2(\Omega)\right)}.\label{2ndGammacontraction}
	\end{align}
	Here $I(\cdot)$ is a continuous and increasing function given by Theorem \ref{2ndlinear-parabolic-equation} when applied to $\mathcal{P}^*$ which is defined by \eqref{2ndlinearopt} and satisfies Lemma \ref{2ndlinearisation-elliptic} and Corollary \ref{2ndgeneratoroflinearisation}. As $\tilde{u}_1(t)$ and $\tilde{u}_2(t)$ belong to $B_{H^2}(\tilde{u}_0, r)$ for $t\in[0, T]$, we can use inequality \eqref{2ndnonlinearity-Lip-0} in Lemma \ref{2ndLip-nonlinearity} to estimate the right hand side, obtaining for $t\in [0, T]$,
	\begin{align}
		\left\|[\mathcal{F}(\tilde{u}_1)](t)-[\mathcal{F}(\tilde{u}_2)](t)\right\|_{L^2(\Omega)}\leq&\left\|[F(\tilde{u}_1)](t)-[F(\tilde{u}_2)](t)\right\|_{L^2(\Omega)}+\left\|\mathcal{P}^*\tilde{u}_1(t)-\mathcal{P}^*\tilde{u}_2(t)\right\|_{L^2(\Omega)}\notag\\
		\leq&\left(L_e+\left\|\mathcal{P}^*\right\|_{\mathcal{B}\left(H^2(\Omega), L^2(\Omega)\right)}\right)\left\|\tilde{u}_1(t)-\tilde{u}_2(t)\right\|_{H^2(\Omega)}.\notag
	\end{align}
	As $\tilde{u}_1(0)=\tilde{u}_2(0)=\tilde{u}_0\in \left\{H^2(\Omega)\cap H_0^1(\Omega)\right\}= D(\mathcal{P}^*)$, then we have
	\be\label{2ndu1}
	\sup_{t\in[0, T]}\left\|\tilde{u}_1(t)-\tilde{u}_2(t)\right\|_{H^2(\Omega)}\leq T^\alpha\left\|\tilde{u}_1-\tilde{u}_2\right\|_{C^\alpha\left([0, T]; H^2(\Omega)\right)}
	\ee
	and so
	\be\label{2ndF1}
	\left\|\mathcal{F}(\tilde{u}_1)-\mathcal{F}(\tilde{u}_2)\right\|_{C\left([0, T]; L^2(\Omega)\right)}\leq \left[L_e+\left\|\mathcal{P}^*\right\|_{\mathcal{B}\left(H^2(\Omega), L^2(\Omega)\right)}\right]T^\alpha\left\|\tilde{u}_1-\tilde{u}_2\right\|_{C^\alpha\left([0, T]; H^2(\Omega)\right)}
	\ee
	On the other hand, for $0\leq t<t+h\leq T$, by using\eqref{2ndMax-II} in Lemma \ref{2ndRHSMax} and \eqref{2ndu1}, we get
	\begin{align}
		&\left\|[\mathcal{F}(\tilde{u}_1)](t+h)-[\mathcal{F}(\tilde{u}_2)](t+h)-[\mathcal{F}(\tilde{u}_1)](t)+[\mathcal{F}(\tilde{u}_2)](t)\right\|_{L^2(\Omega)}\notag\\
		=&\bigg\|\int_0^1[F'(\gamma\tilde{u}_1+(1-\gamma)\tilde{u}_2)(\tilde{u}_1-\tilde{u}_2)](t+h)-[F'(\gamma\tilde{u}_1+(1-\gamma)\tilde{u}_2)(\tilde{u}_1-\tilde{u}_2)](t)\notag\\
		-&\mathcal{P}^*\left[\tilde{u}_1(t+h)-\tilde{u}_2(t+h)-\tilde{u}_1(t)+\tilde{u}_2(t)\right]d\gamma\bigg\|_{L^2(\Omega)}\notag\\
		\leq&2h^\alpha T^\alpha L_{B}\left\|\tilde{u}_1-\tilde{u}_2\right\|_{C^\alpha\left([0, T]; H^2(\Omega)\right)}\notag\\
		+&2h^\alpha T^\alpha L_{B}\left\|\gamma\tilde{u}_1+(1-\gamma)\tilde{u}_2+\theta_1\right\|_{C^\alpha\left([0, T]; H^2(\Omega)\right)}\left\|\tilde{u}_1-\tilde{u}_2\right\|_{C^\alpha\left([0, T]; H^2(\Omega)\right)}\notag\\
		\leq&2h^\alpha T^\alpha L_{B}\left(1+\|u_0\|_{H^2(\Omega)}+\kappa(2C)^{-1}\right)\left\|\tilde{u}_1-\tilde{u}_2\right\|_{C^\alpha\left([0, T]; H^2(\Omega)\right)}\label{2ndcontraction-2},
	\end{align} and
		\begin{align}
		&[\mathcal{F}(\tilde{u}_1)-\mathcal{F}(\tilde{u}_2)]_{C^\alpha\left([0, T]; L^2(\Omega)\right)}\notag\\
		=&\sup_{0\leq t<t+h\leq T}\frac{1}{h^\alpha}\left\{{\|[\mathcal{F}(\tilde{u}_1)](t+h)-[\mathcal{F}(\tilde{u}_2)](t+h)-[\mathcal{F}(\tilde{u}_1)](t)+[\mathcal{F}(\tilde{u}_2)](t)\|_{L^2(\Omega)}}\right\}\notag\\
		\leq&2T^\alpha L_B\left(1+\|u_0\|_{H^2(\Omega)}+\kappa(2C)^{-1}\right)\left\|\tilde{u}_1-\tilde{u}_2\right\|_{C^\alpha\left([0, T]; H^2(\Omega)\right)}.\label{2ndcontraction-5}
	\end{align}
	Thus we can deduce from \eqref{2ndGammacontraction}, \eqref{2ndF1} and \eqref{2ndcontraction-5}:
	\begin{align}
		&\left\|\Gamma\tilde{u}_1-\Gamma\tilde{u}_2\right\|_{C^\alpha\left([0, T]; H^2(\Omega)\right)}\notag\\
		\leq&\gamma_0I(T_0)\left\|\mathcal{F}(\tilde{u}_1)-\mathcal{F}(\tilde{u}_2)\right\|_{C^\alpha\left([0, T]; L^2(\Omega)\right)}\notag\\
		\leq&\gamma_0I(T_0)\left(\left\|\mathcal{F}(\tilde{u}_1)-\mathcal{F}(\tilde{u}_2)\right\|_{C\left([0, T]; L^2(\Omega)\right)}+[\mathcal{F}(\tilde{u}_1)-\mathcal{F}(\tilde{u}_2)]_{C^\alpha([0, T]; L^2(\Omega))}\right)\notag\\
		\leq&\gamma_0I(T_0)\left[L_e+\left\|\mathcal{P}^*\right\|_{\mathcal{B}(H^2(\Omega), L^2(\Omega))}+2L_B\left(1+\|u_0\|_{H^2(\Omega)}+\kappa(2C)^{-1}\right)\right]T^\alpha\notag\\
		\cdot&\left\|\tilde{u}_1-\tilde{u}_2\right\|_{C^\alpha([0, T]; H^2(\Omega))}.\label{2ndfinalcontraction}
	\end{align}
	Set
	\be\label{2ndtimedef-1}
	T_0^*:= {\left[2\gamma_0I(T_0)\left(L_e+\left\|\mathcal{P}^*\right\|_{\mathcal{B}(H^2(\Omega), L^2(\Omega))}+2L_B\left(1+\|u_0\|_{H^2(\Omega)}+\kappa(2C)^{-1}\right)\right)\right]^{-\frac{1}{\alpha}}}.
	\ee
	If $0<T\leq \min\{T_0,\ T_0^*\}$, then $\Gamma$ satisfies the contraction property \eqref{2ndcontractivemap} by using \eqref{2ndfinalcontraction}.
	
	To prove that $\Gamma(Y)\subseteq Y$, it remains to check that
	\be\label{2ndmapinto}
	\left\|\Gamma\tilde{u}-\tilde{u}_0\right\|_{C^\alpha\left([0, T]; H^2(\Omega)\right)}\leq r, \quad \forall\ \tilde{u}\in Y.
	\ee
	Let us observe that if $0<T<\min\{T_0,\ T_0^*\}$, then from \eqref{2ndcontractivemap}, we get
	\begin{align}
		\left\|\Gamma\tilde{u}-\tilde{u}_0\right\|_{C^\alpha([0, T]; H^2(\Omega))}\leq&\left\|\Gamma\tilde{u}-\Gamma\tilde{u}_0\right\|_{C^\alpha([0, T]; H^2(\Omega))}+\left\|\Gamma\tilde{u}_0-\tilde{u}_0\right\|_{C^\alpha([0, T]; H^2(\Omega))}\notag\\
		\leq&\frac{1}{2}\left\|\tilde{u}-\tilde{u}_0\right\|_{C^\alpha([0, T]; H^2(\Omega))}+\left\|\Gamma\tilde{u}_0-\tilde{u}_0\right\|_{C^\alpha([0, T]; H^2(\Omega))}\notag\\
		\leq&\frac{r}{2}+\left\|\Gamma\tilde{u}_0-\tilde{u}_0\right\|_{C^\alpha([0, T]; H^2(\Omega))}\notag.
	\end{align}
	Now $\Gamma\tilde{u}_0-\tilde{u}_0\in C^\alpha\left([0, T]; H^2(\Omega)\cap H_0^1(\Omega)\right)$ and it vanishes at $t=0$, so there exists a $\delta^*=\delta^*(r)>0$ such that, if $0<T\leq\delta^*$, then
	\[\left\|\Gamma\tilde{u}_0-\tilde{u}_0\right\|_{C^\alpha\left([0, T]; H^2(\Omega)\right)}\leq \frac{r}{2},\]
	and consequently \eqref{2ndmapinto} is true by choosing $0<T\leq\min\{T_0,\ T_0^*,\ \delta^*\}$. Because $r$ is controlled by $C=C(\Omega)$ and $\kappa$, we also note $\delta^*=\delta^*(\kappa,\Omega)$.
	
	Summing up, set
	\be\label{2ndtimedef}
	T_1:=\min\left\{ T_0,\ T_0^*,\ \delta^*\right\},
	\ee
	$\Gamma$, defined by \eqref{2ndintegralform2},  is a contractive mapping of $Y$ into itself provided
	\[0<T< T_1.\]
	Hence, $\Gamma$ has a unique fixed point $\tilde{u}$ in $Y$, $\tilde{u}\in C^\alpha\left([0, T_1); H^2(\Omega)\cap H_0^1(\Omega)\right)$ is a unique solution of the integral equation \eqref{2ndintegralform}, and $\tilde{u}\in C^\alpha\left([0, T_1); H^2(\Omega)\cap H_0^1(\Omega)\right)\cap C^{\alpha+1}\left([0, T_1); L^2(\Omega)\right)$ is a unique strict solution of the differential equation \eqref{2nd2nd-NLP}, due to the preceding results in \textbf{Equivalence}, Theorem \ref{2ndlinear-parabolic-equation},  Theorem 1.2 of \cite{LS1} and Theorem 4.5 of \cite{SE}.
	
	Recall that $[F(\tilde{u})](0)\in D_{\mathcal{P}^*}(\alpha, \infty)$,  $\tilde{u}_0\in \left\{H^2(\Omega)\cap H_0^1(\Omega)\right\}= D(\mathcal{P}^*)$, $F(\tilde{u})\in C^{\alpha}\left([0, T_1); L^2(\Omega)\right)$ with $L^2(\Omega)=\overline{D(\mathcal{P}^*)}$, as $\mathcal{P}^*\tilde{u}_0+[\mathcal{F}(\tilde{u})](0)=[F(\tilde{u})](0)=\tilde{u}'(0)$,  Theorem 1.2 of \cite{LS1} as well as Theorem \ref{2ndlinear-parabolic-equation} state in particular if $\tilde{u}'(t)\in D_{\mathcal{P}^*}(\alpha, \infty)$ for $t=0$, then the same is true for $t>0$, i.e.~$\tilde{u}'(t)\in D_{\mathcal{P}^*}(\alpha, \infty)$ for all $ t\in[0, T_1)$, provided \[\mathcal{F}(\tilde{u})=F(\tilde{u})-\mathcal{P}^*\tilde{u}\in C^\alpha\left([0, T_1); L^2(\Omega)\right).\]
	Meanwhile, $[F(\tilde{u})](t)\in D_{\mathcal{P}^*}(\alpha, \infty)$ because $\tilde{u}'(t)=[F(\tilde{u})](t)$ and $\tilde{u}'(t)\in D_{\mathcal{P}^*}(\alpha, \infty)$ for all $t\in[0, T_1)$.
	
	Because of Theorem \ref{2nd-mild-solution-cor} and $u=\tilde{u}+\theta_1$, the initial-boundary value problem  \eqref{2ndcp1} has a unique strict solution $(u, w)$ with the regularity
	\[u\in C^\alpha\left([0, T_1); H^2(\Omega)\right)\cap C^{\alpha+1}\left([0, T_1); L^2(\Omega)\right),\]
	\[{w}\in C\left([0, T_1); H^2(\Omega)\right)\cap C^{1}\left([0, T_1); H^1(\Omega)\right)\cap C^{2}\left([0, T_1); L^2(\Omega)\right).\]
	This concludes the proof of Theorem \ref{2ndcp-sys}.
\end{proof}

The well-posedness of the coupled system, Theorem \ref{2ndcoupled system}, is a direct consequence of Theorem \ref{2ndcp-sys}.

\appendix
\renewcommand{\theequation}{\thesection.\arabic{equation}}

\section{Auxiliary Estimates}\label{AppA}

In this appendix,  we formulate and prove some estimates which are repeatedly used in the proof of the main results. They follow from well-known  properties of the Sobolev spaces $H^k(\Omega)$ with $k>0$. In particular, we recall the following well-known algebra property, see \cite{TT} for a proof:
\begin{lem}\label{alg}
	$H^k(\Omega)$ is an algebra when $k>\frac{n}{2}$. In particular, $H^1(\Omega)$ is an algebra if $\Omega\subset\mathds{R}$ and $H^2(\Omega)$ is an algebra if $\Omega\subset \mathds{R}^n$, $n=1,\ 2$.
\end{lem}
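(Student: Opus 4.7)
The plan is to prove $\|uv\|_{H^k(\Omega)} \leq C\|u\|_{H^k(\Omega)} \|v\|_{H^k(\Omega)}$ for $u,v \in H^k(\Omega)$ when $k > n/2$ on a bounded Lipschitz domain $\Omega$. The cornerstone is the classical Sobolev embedding $H^k(\Omega) \hookrightarrow L^\infty(\Omega) \cap C(\overline{\Omega})$, valid precisely for $k > n/2$; combined with the Leibniz formula
\[
D^\alpha(uv) = \sum_{\beta \leq \alpha} \binom{\alpha}{\beta}\, D^\beta u\cdot D^{\alpha-\beta} v,\qquad |\alpha|\leq k,
\]
this reduces the task to $L^2$-estimates for products of derivatives of orders summing to at most $k$.

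For the two cases actually invoked in the present paper, I would argue by hand. When $k=1$ and $\Omega\subset\mathds{R}$, the one-dimensional embedding $H^1(\Omega)\hookrightarrow C(\overline{\Omega})$ yields both $\|uv\|_{L^2}\leq \|u\|_{L^\infty}\|v\|_{L^2}$ and $\|(uv)'\|_{L^2}\leq \|u\|_{L^\infty}\|v'\|_{L^2}+\|u'\|_{L^2}\|v\|_{L^\infty}$, each dominated by $C\|u\|_{H^1(\Omega)}\|v\|_{H^1(\Omega)}$. When $k=2$ and $n\in\{1,2\}$, the embedding $H^2(\Omega)\hookrightarrow L^\infty(\Omega)$ handles every Leibniz term in which at least one factor is either undifferentiated or only once differentiated; the only nontrivial contribution is the symmetric mixed term $\partial_i u\cdot \partial_j v$ appearing when $|\alpha|=2$, which is controlled via H\"older in the dual pair $(4,4)$ together with the Sobolev embedding $H^1(\Omega)\hookrightarrow L^4(\Omega)$, valid precisely because $n\leq 2$.

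For the general claim $k>n/2$, the plan is to estimate each Leibniz term $\|D^\beta u\cdot D^{\alpha-\beta}v\|_{L^2}$ by H\"older's inequality with $\tfrac{1}{2}=\tfrac{1}{p}+\tfrac{1}{q}$, choosing $p,q\in[2,\infty]$ so that the Gagliardo--Nirenberg inequality yields $\|D^\beta u\|_{L^p}\leq C\|u\|_{H^k(\Omega)}$ and $\|D^{\alpha-\beta}v\|_{L^q}\leq C\|v\|_{H^k(\Omega)}$. The main obstacle I expect is verifying admissibility of these exponents in the symmetric regime where $|\beta|$ and $|\alpha-\beta|$ are both close to $k/2$, so that neither factor naturally lies in $L^\infty$; the strict inequality $k>n/2$ is precisely the quantitative condition that makes the required interpolation indices admissible. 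Since the lemma is classical and its full proof is standard, the cleanest presentation is simply to refer to Tartar's monograph \cite{TT}, as already indicated in the statement.
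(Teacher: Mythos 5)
Your proposal is correct, but note that the paper does not actually prove this lemma: it is stated as a recalled classical fact with the proof delegated entirely to Tartar's monograph \cite{TT}, which is also the conclusion you arrive at in your last sentence. What you add beyond the paper is a self-contained argument for the two cases the paper actually uses, and both are sound: in one dimension the embedding $H^1(\Omega)\hookrightarrow C(\overline{\Omega})$ plus the product rule gives the $H^1$ algebra property, and for $H^2$ with $n\le 2$ the only nontrivial Leibniz term $\partial_i u\,\partial_j v$ is indeed handled by H\"older with exponents $(4,4)$ and $H^1(\Omega)\hookrightarrow L^4(\Omega)$. Two minor quibbles: that embedding is not valid ``precisely because $n\leq 2$'' (it holds up to $n=4$, and correspondingly $H^2$ is an algebra up to $n=3$), so the dimension restriction in the lemma is sufficient rather than sharp; and your general-$k$ Gagliardo--Nirenberg scheme is only a plan whose admissibility bookkeeping in the symmetric regime you acknowledge but do not carry out, and it tacitly requires $\Omega$ to be an extension (e.g.~Lipschitz) domain, which is harmless here since $\Omega$ is a bounded interval. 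In short, your elementary treatment of the $k=1$ and $k=2$ cases would make the paper self-contained at the cost of a few lines, while the paper's citation-only approach keeps the exposition minimal; neither choice affects the validity of the results that rely on Lemma \ref{alg}.
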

We formulate a direct consequence of Lemma \ref{alg}, which is used throughout this article. 
\begin{lem}\label{2ndalg-1}
	Let $\Omega\subset\mathds{R}$ be an open and bounded subset and $C=C(\Omega)$ be a positive constant. If $f_1\in H^1(\Omega)$ and $f_2\in L^2(\Omega)$, then
	\be\label{2ndalg-1-1}
	\left\|f_1f_2\right\|_{L^2(\Omega)}\leq C\left\|f_1\right\|_{H^1(\Omega)}\left\|f_2\right\|_{L^2(\Omega)}.
	\ee
	If $g_1$, $g_2\in H^1(\Omega)$, $g_3\in H^2(\Omega)$, then
	\be\label{2ndalg-1-2}
	\left\|g_1\frac{\partial}{\partial x}\left(g_2\frac{\partial g_3}{\partial x}\right)\right\|_{L^2(\Omega)}\leq C\left\|g_1\right\|_{H^1(\Omega)}\left\|g_2\right\|_{H^1(\Omega)}\left\|g_3\right\|_{H^2(\Omega)}.
	\ee
	If $g_1$, $g_2\in H^1(\Omega)$, $g_3$, $g_4\in H^2(\Omega)$, then
	\be\label{alg-1-3}
	\left\|g_1\frac{\partial}{\partial x}\left(g_2g_3\frac{\partial g_4}{\partial x}\right)\right\|_{L^2(\Omega)}\leq C\left\|g_1\right\|_{H^1(\Omega)}\left\|g_2\right\|_{H^1(\Omega)}\left\|g_3\right\|_{H^2(\Omega)}\left\|g_4\right\|_{H^2(\Omega)}.
	\ee
\end{lem}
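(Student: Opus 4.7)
The plan is to derive the first estimate from the one-dimensional Sobolev embedding, and then to obtain the two derivative estimates by expanding the inner derivative via the product rule and reducing each summand to the first estimate combined with the algebra property of $H^1(\Omega)$ recorded in Lemma \ref{alg}.

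For \eqref{2ndalg-1-1}, the key input is the continuous embedding $H^1(\Omega)\hookrightarrow L^\infty(\Omega)$, which holds because $\Omega\subset\mathds{R}$ is one-dimensional. From $\|f_1\|_{L^\infty(\Omega)}\leq C\|f_1\|_{H^1(\Omega)}$ one immediately obtains
\[
\|f_1 f_2\|_{L^2(\Omega)}\leq \|f_1\|_{L^\infty(\Omega)}\|f_2\|_{L^2(\Omega)}\leq C\|f_1\|_{H^1(\Omega)}\|f_2\|_{L^2(\Omega)}.
\]

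For \eqref{2ndalg-1-2}, expand $\partial_x(g_2\,\partial_x g_3)=(\partial_x g_2)(\partial_x g_3)+g_2\,(\partial_x^2 g_3)$. Since $g_3\in H^2(\Omega)$, we have $\partial_x g_3\in H^1(\Omega)$ and $\partial_x^2 g_3\in L^2(\Omega)$; since $g_2\in H^1(\Omega)$, we have $\partial_x g_2\in L^2(\Omega)$. Two applications of \eqref{2ndalg-1-1} then yield
\[
\|(\partial_x g_2)(\partial_x g_3)\|_{L^2(\Omega)}\leq C\|\partial_x g_3\|_{H^1(\Omega)}\|\partial_x g_2\|_{L^2(\Omega)}\leq C\|g_3\|_{H^2(\Omega)}\|g_2\|_{H^1(\Omega)},
\]
\[
\|g_2\,(\partial_x^2 g_3)\|_{L^2(\Omega)}\leq C\|g_2\|_{H^1(\Omega)}\|g_3\|_{H^2(\Omega)},
\]
so that $\partial_x(g_2\,\partial_x g_3)\in L^2(\Omega)$ with the right bound. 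Multiplying by $g_1\in H^1(\Omega)$ and invoking \eqref{2ndalg-1-1} once more gives \eqref{2ndalg-1-2}.

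For \eqref{alg-1-3}, observe that $g_2 g_3\in H^1(\Omega)$ by Lemma \ref{alg} applied to the product of two $H^1$ factors (using $H^2(\Omega)\hookrightarrow H^1(\Omega)$), with $\|g_2 g_3\|_{H^1(\Omega)}\leq C\|g_2\|_{H^1(\Omega)}\|g_3\|_{H^1(\Omega)}\leq C\|g_2\|_{H^1(\Omega)}\|g_3\|_{H^2(\Omega)}$. Substituting $g_2 g_3$ for the middle factor in \eqref{2ndalg-1-2} and applying that estimate (with $g_4$ in place of $g_3$) produces \eqref{alg-1-3}. The whole argument is routine bookkeeping: the single nontrivial ingredient is the one-dimensional Sobolev embedding $H^1(\Omega)\hookrightarrow L^\infty(\Omega)$, which has no direct analogue in higher dimensions and is the reason the restriction $\Omega\subset\mathds{R}$ is imposed.
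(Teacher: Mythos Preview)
Your proof is correct and follows essentially the same approach as the paper: the Sobolev embedding $H^1(\Omega)\hookrightarrow L^\infty(\Omega)$ for \eqref{2ndalg-1-1}, then the $H^1$-algebra property for the remaining two estimates. The only cosmetic difference is that for \eqref{2ndalg-1-2} the paper does not expand via the product rule but instead bounds $\|g_1\,\partial_x(g_2\,\partial_x g_3)\|_{L^2}\leq C\|g_1\|_{H^1}\|g_2\,\partial_x g_3\|_{H^1}$ in one step and then applies Lemma~\ref{alg} to the product $g_2\,\partial_x g_3$.
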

\begin{proof}
	\begin{align}
		\left\|f_1f_2\right\|_{L^2(\Omega)}&\leq\left\|f_1\right\|_{L^\infty(\Omega)}\left\|f_2\right\|_{L^2(\Omega)}\leq C\left\|f_1\right\|_{H^1(\Omega)}\left\|f_2\right\|_{L^2(\Omega)}\notag.
	\end{align}
	\begin{align}
		\left\|g_1\frac{\partial}{\partial x}\left(g_2\frac{\partial g_3}{\partial x}\right)\right\|_{L^2(\Omega)}&\leq C\left\|g_1\right\|_{H^1(\Omega)}\left\|g_2\frac{\partial g_3}{\partial x}\right\|_{H^1(\Omega)}
		\leq C\left\|g_1\right\|_{H^1(\Omega)}\left\|g_2\right\|_{H^1(\Omega)}\left\|\frac{\partial g_3}{\partial x}\right\|_{H^1(\Omega)}\notag\\
		&\leq C\left\|g_1\right\|_{H^1(\Omega)}\left\|g_2\right\|_{H^1(\Omega)}\left\|g_3\right\|_{H^2(\Omega)}\notag.
	\end{align}
	\begin{align}
		\left\|g_1\frac{\partial}{\partial x}\left(g_2g_3\frac{\partial g_4}{\partial x}\right)\right\|_{L^2(\Omega)}\leq& C\left\|g_1\right\|_{H^1(\Omega)}\left\|g_2g_3\right\|_{H^1(\Omega)}\left\|g_4\right\|_{H^2(\Omega)}\notag\\
		\leq& C\left\|g_1\right\|_{H^1(\Omega)}\left\|g_2\right\|_{H^1(\Omega)}\left\|g_3\right\|_{H^2(\Omega)}\left\|g_4\right\|_{H^2(\Omega)}\notag.
	\end{align}
\end{proof}
We now formulate and prove three technical estimates used in the main body of this article, which repeatedly use these algebra properties.
\begin{lem}\label{2ndestimates}
	Let $T\in(0, \infty)$, $\kappa=  \inf_{x\in{\Omega}}w_0(x)>0$ and $w_0\in H^2(\Omega)$. Then there exists a constant $C=C(\Omega)>0$, such that for all
	\be\label{2ndr-def-I}
	r\in\left(0, \frac{\kappa}{2C}\right),
	\ee
	the function $w\in C\left([0, T]; B_{H^1}\left(w_0, r\right)\right)$ has the lower bound:
	\be\label{2ndw-lower-bound}
	w(t)\geq\frac{\kappa}{2},\quad \forall\ t\in[0, T].
	\ee
	Moreover, for all $w_1$, $w_2\in C\left([0, T]; B_{H^1}(w_0, r)\right)$, there exist  positive constants $C_k$, $k=1,\ 2,\ 3$, which depend on $\Omega$, $\kappa$ and $\left\|w_0\right\|_{H^1(\Omega)}$, such that
	\be\label{2ndC-a}
	\sup_{t\in[0, T]}\left\|\frac{1}{[w_1(t)]^k}\right\|_{H^1(\Omega)}\leq {C_1^k},\quad  k=1,\ 2,\ 3.
	\ee
	\be\label{2ndC-d}
	\sup_{t\in[0, T]}\left\|\frac{1}{[w_1(t)]^k}-\frac{1}{[w_2(t)]^k}\right\|_{H^1(\Omega)}
	\leq C_k\sup_{t\in[0, T]}\left\|w_1(t)-w_2(t)\right\|_{H^1(\Omega)},\quad k=2,\ 3.
	\ee
\end{lem}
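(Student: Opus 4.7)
\textbf{Proof proposal for Lemma \ref{2ndestimates}.}

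The plan is to exploit the one-dimensional Sobolev embedding $H^1(\Omega) \hookrightarrow L^\infty(\Omega) \hookrightarrow C(\bar\Omega)$ together with the algebra property of $H^1(\Omega)$ from Lemma \ref{alg}, fixing a single constant $C = C(\Omega) \geq 1$ that bounds both embedding and multiplication constants. The three assertions are largely independent, and I would prove them in the stated order, since (ii) relies on (i), and (iii) uses (i) and (ii).

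For the lower bound \eqref{2ndw-lower-bound}, fix $t \in [0,T]$ and $w(t) \in B_{H^1}(w_0, r)$. By the embedding, $\|w(t) - w_0\|_{L^\infty(\Omega)} \leq C \|w(t) - w_0\|_{H^1(\Omega)} \leq C r$. The choice $r < \kappa/(2C)$ then gives $\|w(t) - w_0\|_{L^\infty(\Omega)} < \kappa/2$, so pointwise $w(x,t) \geq w_0(x) - \kappa/2 \geq \kappa - \kappa/2 = \kappa/2$ for every $x \in \bar\Omega$. This step is routine given the 1D embedding.

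For the uniform bound \eqref{2ndC-a}, I first handle $k=1$. Since $w_1(t) \geq \kappa/2$, one has $\|1/w_1(t)\|_{L^2(\Omega)} \leq (2/\kappa)|\Omega|^{1/2}$; differentiating, $\partial_x(1/w_1) = -w_1^{-2}\partial_x w_1$, so $\|\partial_x(1/w_1(t))\|_{L^2} \leq (2/\kappa)^2 \|w_1(t)\|_{H^1} \leq (2/\kappa)^2(\|w_0\|_{H^1} + r)$. Combining these yields $\sup_t \|1/w_1(t)\|_{H^1} \leq C_1$ for some constant $C_1$ depending only on $\Omega$, $\kappa$, $\|w_0\|_{H^1(\Omega)}$. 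The cases $k=2,3$ then follow by the algebra property, which gives $\|1/w_1^k\|_{H^1} \leq C^{k-1} \|1/w_1\|_{H^1}^k \leq C_1^k$ after (possibly) absorbing $C^{k-1}$ into $C_1$.

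For the Lipschitz estimate \eqref{2ndC-d}, the algebraic identity
\begin{equation*}
\frac{1}{w_1^k} - \frac{1}{w_2^k} \;=\; -\,(w_1 - w_2)\,\frac{\sum_{j=0}^{k-1} w_1^{\,j} w_2^{\,k-1-j}}{w_1^k\, w_2^k}
\end{equation*}
reduces everything to the algebra property and assertion \eqref{2ndC-a}. Applying Lemma \ref{alg} term-by-term,
\begin{equation*}
\left\|\tfrac{1}{w_1^k(t)} - \tfrac{1}{w_2^k(t)}\right\|_{H^1} \leq C^{?}\, \|w_1(t)-w_2(t)\|_{H^1} \sum_{j=0}^{k-1}\|w_1(t)\|_{H^1}^{\,j}\|w_2(t)\|_{H^1}^{\,k-1-j}\,\|1/w_1^k(t)\|_{H^1}\|1/w_2^k(t)\|_{H^1},
\end{equation*}
and bounding $\|w_i(t)\|_{H^1} \leq \|w_0\|_{H^1} + r \leq \|w_0\|_{H^1} + \kappa/(2C)$ together with \eqref{2ndC-a} produces a constant $C_k$ depending only on $\Omega$, $\kappa$, $\|w_0\|_{H^1}$, as claimed.

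I do not anticipate a substantive obstacle. The only delicate point is bookkeeping: one must fix $C = C(\Omega)$ once and for all (covering both the $H^1 \hookrightarrow L^\infty$ embedding and the multiplication constant in Lemma \ref{alg}), and then choose $r$ strictly below $\kappa/(2C)$ so that the lower bound \eqref{2ndw-lower-bound} is strict enough to divide by $w$ without loss; from there each constant $C_k$ can be written down explicitly in terms of $\kappa$, $|\Omega|$, $C$, $\|w_0\|_{H^1}$.
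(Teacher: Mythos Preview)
Your proposal is correct and follows essentially the same approach as the paper: Sobolev embedding for the pointwise lower bound, a direct computation of $\|1/w\|_{H^1}$ from the lower bound plus the algebra property for higher powers, and the same algebraic factorisation of $w_1^{-k}-w_2^{-k}$ for the Lipschitz estimate. The paper even arrives at the explicit constants $C_2=2C_1^3$ and $C_3=3C_1^4$ by exactly the argument you sketch.
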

\begin{proof}
We first prove assertion \eqref{2ndw-lower-bound}.

Since $ w\in C\left([0, T]; B_{H^2}\left({w}_0, r\right)\right)$,  then $\|w(t)-w_0\|_{H^{1}(\Omega)}\leq r$ holds for all $t\in [0, T]$.

The triangle inequality and the Sobolev embedding theorem imply there exists a positive constant $C=C(\Omega)$, such that for all $r\in\left(0, \frac{\kappa}{2C}\right)$, it follows that
\bse\label{2ndw-lower-bound1}
\be\label{2ndw-lower-bound12}
w(t)=w_0+w(t)-w_0\geq \kappa-\left\|w(t)-w_0\right\|_{L^{\infty}(\Omega)}\geq \kappa-C\left\|w(t)-w_0\right\|_{H^{1}(\Omega)}\geq\kappa-Cr\geq \frac{\kappa}{2},
\ee
\be\label{2ndw-lower-bound11}
\left\|w(t)\right\|_{H^1(\Omega)}\leq\tilde{C},
\ee 
\ese
for all $t\in [0, T]$. Here, $\tilde{C}=\frac{\kappa}{2C}+\left\|w_0\right\|_{H^1(\Omega)}$.  Hence we have proved assertion \eqref{2ndw-lower-bound}.

Since \eqref{2ndw-lower-bound1} and $r\in\left(0, \frac{\kappa}{2C}\right)$,  we find
\begin{equation*}\begin{split}
\sup_{t\in[0, T]}\left\|\frac{1}{w(t)}\right\|^2_{H^1(\Omega)}=& \sup_{t\in[0, T]}\int_\Omega\left|\frac{1}{w(t)}\right|^2+\left| \frac{\partial}{\partial x}\left[\frac{1}{w(t)}\right]\right|^2dx
\leq\frac{4C}{\kappa^2}+\sup_{t\in[0, T]}\int_\Omega\frac{1}{\left|w(t)\right|^4}\left| \frac{\partial  w(t)}{\partial x} \right|^2dx\\
\leq&\frac{4C}{\kappa^2}+\frac{16}{\kappa^4}\sup_{t\in[0, T]}\int_\Omega\left|\frac{\partial  w(t)}{\partial x}\right|^2dx
\leq\frac{4C}{\kappa^2}+\frac{16}{\kappa^4}\sup_{t\in[0, T]}\left\|w(t)\right\|_{H^1(\Omega)}^2
\leq\frac{4C}{\kappa^2}+\frac{16}{\kappa^4}\tilde{C}^2.
\end{split}\end{equation*}
We set $C_1^2=\frac{4C}{\kappa^2}+\frac{16}{\kappa^4}\tilde{C}^2$ such that
$C_1$ is a positive constant depending on $\Omega$, $\kappa$,  and $\left\|w_0\right\|_{H^1(\Omega)}$. Using the algebra property of $H^1(\Omega)$, see Lemma \ref{alg}, the assertion \eqref{2ndC-a} holds for $t\in [0, T]$.
We can now proceed to show  \eqref{2ndC-d}. For  $w_1$, $w_2\in C\left([0, T]; B_{H^2}(\tilde{w}_0, r)\right)$, \eqref{2ndC-a}, the algebraic property of $H^1(\Omega)$ from Lemma \ref{alg} and triangle inequality imply
\[\sup_{t\in[0, T]}\left\| \frac{\left[w_1(t)+w_2(t)\right]}{[w_1(t)]^{2}[w_2(t)]^{2}}\right\|_{H^1(\Omega)}\leq2 C^3_1,\
\sup_{t\in[0, T]}\left\|\frac{[w_1(t)]^2+[w_2(t)]^2+w_1(t) w_2(t)}{[w_1(t)]^3 [w_2(t)]^3}\right\|_{H^1(\Omega)}\leq3C_1^4.\]
Hence, setting $C_2=2 C^3_1$ and $C_3=3C_1^4$ implies \eqref{2ndC-d}.

This concludes the proof of Lemma \ref{2ndestimates}.
\end{proof}

\begin{lem}\label{2ndLip-G-Lem}
	Let $\tilde{w}_0=w_0-\theta_2\in H^2(\Omega)\cap H_0^1(\Omega)$ and $r\in\left(0, \frac{\kappa}{2C}\right)$.
	Then the nonlinear operator $G$, 
	\bse\label{2ndG-def}
	\be\label{2ndG-def1}
	G:  C\left([0, T]; B_{H^1}\left(\tilde{w}_0, r\right)\right)\longrightarrow  C([0, T]; H^1(\Omega)),\quad \tilde{w}\longmapsto G(\tilde{w})
	\ee
	\be\label{2ndG-def2}
	[G(\tilde{w})](t)=G(\tilde{w}(t))=-\frac{\beta_F}{(\tilde{w}(t)+\theta_2)^2}+\beta_p(\theta_1-1),
	\ee
	\ese
	satisfies
	\be\label{2ndHoldercontinuous}
	\sup_{0\leq t<t+h\leq T}\left\|[G(\tilde{w})](t+h)-[G(\tilde{w})](t)\right\|_{H^{1}(\Omega)}\leq L_G\sup_{0\leq t<t+h\leq T}\left\|\tilde{w}(t+h)-\tilde{w}(t)\right\|_{H^{1}(\Omega)},
	\ee
	for all $h\in(0, T]$, $\tilde{w}\in C\left([0, T]; B_{H^1}(\tilde{w}_0, r)\right)$. Here $L_G=L_G\left(\Omega,\ \kappa,\ \|w_0\|_{H^{1}(\Omega)},\ \beta_F\right)$ is a constant.	
	Moreover, $G$ is locally Lipschitz continuous with respect to  $\tilde{w}\in C\left([0, T]; B_{H^1}\left(\tilde{w}_0, r\right)\right)$, i.e.
	\be\label{2ndLip-G}
	\sup_{t\in[0, T]}\left\|[G(\tilde{w}_1)](t)-[G(\tilde{w}_2)](t)\right\|_{H^{1}(\Omega)}\leq L_G\sup_{t\in[0, T]}\left\|\tilde{w}_1(t)-\tilde{w}_2(t)\right\|_{H^{1}(\Omega)}
	\ee
	 for all $\tilde{w}_1, \tilde{w}_2\in C\left([0, T]; B_{H^1}\left(\tilde{w}_0,  r\right)\right)$. In particular,
	\be\label{2ndLip-G-1}
	\sup_{t\in[0, T]}\left\|[G(\tilde{w}_1)](t)-G(\tilde{w}_0)\right\|_{H^{1}(\Omega)}\leq L_G r.
	\ee
	Furthermore, the Fr\'{e}chet derivative $G'(\tilde{w})$ of $G(\tilde{w})$ on  $\tilde{w}\in C\left([0, T]; B_{H^1}\left(\tilde{w}_0, r\right)\right)$, defined by
	\bse\label{2ndFre-G-def-w}
	\be\label{2ndFre-G-def-w1}
	G'\left(\tilde{w}\right):\  C\left([0, T]; H_0^1(\Omega)\right)\longrightarrow C\left([0, T]; H_0^1(\Omega)\right),\quad q \longmapsto  G'\left(\tilde{w}\right)q
	\ee
	\be\label{2ndFre-G-def-w2}
	[G'\left(\tilde{w}\right)q](t)=[G'(\tilde{w}(t))]q(t)=\frac{2\beta_F}{\left(\tilde{w}(t)+\theta_2\right)^3}q(t),
	\ee
	\ese
	satisfies
	\be\label{2ndLip-G-2}
	\sup_{t\in[0, T]}\left\|\left[G'\left(\tilde{w}\right)q\right](t)\right\|_{H^1(\Omega)}\leq L_G\sup_{t\in[0, T]}\left\|q(t)\right\|_{H^1(\Omega)},
	\ee
	and $G'(\tilde{w}(t)): H_0^1(\Omega)\longrightarrow H_0^1(\Omega)$ satisfies
	\be\label{2nduniformly-continuous-Fre-G}
	\lim_{h\rightarrow0}\sup_{\begin{smallmatrix}0\leq t\leq t+ h\leq T\\ 0\leq\tau\leq 1\end{smallmatrix}}\left\|G'(\tilde{w}(t)+\tau[\tilde{w}(t+ h)-\tilde{w}(t)])-G'(\tilde{w}(t))\right\|_{\mathcal{B}\left(H_0^1(\Omega)\right)}=0.
	\ee
\end{lem}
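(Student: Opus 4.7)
The plan is to reduce every assertion of Lemma \ref{2ndLip-G-Lem} to the bounds on powers of $1/w$ already established in Lemma \ref{2ndestimates}, applied to $w = \tilde w + \theta_2$. Since $\tilde w \in C([0,T]; B_{H^1}(\tilde w_0, r))$ translates to $w \in C([0,T]; B_{H^1}(w_0,r))$ with $r \in (0, \kappa/(2C))$, the constants $C_1, C_2, C_3$ of Lemma \ref{2ndestimates} are at our disposal and the pointwise lower bound $w(t) \ge \kappa/2$ keeps all denominators away from zero.

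For the Lipschitz estimate \eqref{2ndLip-G} I would write, at each $t \in [0,T]$,
\[
[G(\tilde w_1)](t) - [G(\tilde w_2)](t) = -\beta_F\left\{\frac{1}{(\tilde w_1(t)+\theta_2)^2} - \frac{1}{(\tilde w_2(t)+\theta_2)^2}\right\},
\]
and invoke \eqref{2ndC-d} with $k=2$ to obtain $\beta_F C_2 \|\tilde w_1(t) - \tilde w_2(t)\|_{H^1}$. Taking the supremum in $t$ gives \eqref{2ndLip-G} with $L_G := \beta_F \max\{C_2, C C_1^3\}$ (the second term will come from \eqref{2ndLip-G-2}). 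The Hölder-in-time estimate \eqref{2ndHoldercontinuous} is the same inequality applied to $\tilde w_1 = \tilde w(t+h)$ and $\tilde w_2 = \tilde w(t)$, both of which lie in $B_{H^1}(\tilde w_0, r)$ by hypothesis. Inequality \eqref{2ndLip-G-1} is then just \eqref{2ndLip-G} with $\tilde w_2 = \tilde w_0$, using $\|\tilde w_1(t) - \tilde w_0\|_{H^1} \le r$.

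For \eqref{2ndLip-G-2} I would use the algebra property of $H^1(\Omega)$ (Lemma \ref{alg}) together with the bound \eqref{2ndC-a} for $k=3$: writing $[G'(\tilde w)q](t) = 2\beta_F q(t)/(\tilde w(t)+\theta_2)^3$, the algebra inequality gives
\[
\|[G'(\tilde w)q](t)\|_{H^1} \le 2\beta_F C\, C_1^3 \|q(t)\|_{H^1},
\]
and membership in $H_0^1(\Omega)$ follows because $q(t)$ vanishes on $\partial\Omega$ while $1/(\tilde w(t)+\theta_2)^3$ is bounded. Enlarging $L_G$ if needed absorbs the constant $2C$.

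The slightly more delicate step is the uniform operator-norm continuity \eqref{2nduniformly-continuous-Fre-G}. For fixed $t,h,\tau$ and any $q \in H_0^1(\Omega)$, the difference
\[
\bigl\{G'(\tilde w(t)+\tau[\tilde w(t+h)-\tilde w(t)]) - G'(\tilde w(t))\bigr\}q
= 2\beta_F\, q \cdot\left\{\frac{1}{(\tilde w(t)+\tau[\tilde w(t+h)-\tilde w(t)]+\theta_2)^3} - \frac{1}{(\tilde w(t)+\theta_2)^3}\right\}
\]
is estimated in $H_0^1(\Omega)$ by the algebra property, followed by \eqref{2ndC-d} with $k=3$ applied to the two arguments (both of which remain in $B_{H^1}(w_0,r)$ by convexity), giving
\[
\|\,\cdot\,\|_{\mathcal{B}(H_0^1(\Omega))} \le 2\beta_F C\, C_3 \tau \|\tilde w(t+h) - \tilde w(t)\|_{H^1}
\le 2\beta_F C\, C_3 \|\tilde w(t+h) - \tilde w(t)\|_{H^1}.
\]
Since $\tilde w \in C([0,T]; H_0^1(\Omega))$ is uniformly continuous on the compact interval $[0,T]$, the right-hand side tends to zero as $h \to 0$ uniformly in $t$ and $\tau$, which is exactly \eqref{2nduniformly-continuous-Fre-G}. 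The main potential pitfall here is verifying that the interpolated argument $\tilde w(t)+\tau[\tilde w(t+h)-\tilde w(t)]$ stays in the admissible ball so that Lemma \ref{2ndestimates} applies; this is immediate from convexity of $B_{H^1}(\tilde w_0,r)$ and the triangle inequality.
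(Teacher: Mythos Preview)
Your proposal is correct and follows essentially the same route as the paper: both reduce each assertion to the estimates of Lemma \ref{2ndestimates} (specifically \eqref{2ndC-a} and \eqref{2ndC-d}) combined with the algebra property of $H^1(\Omega)$, and both handle \eqref{2nduniformly-continuous-Fre-G} via convexity of the ball plus uniform continuity of $\tilde w$ on $[0,T]$. The only minor difference is in \eqref{2ndLip-G-2}: the paper derives it by passing to the limit $\lambda\to 0$ in the Lipschitz bound \eqref{2ndLip-G} applied to $\tilde w_1=\tilde w+\lambda q$, $\tilde w_2=\tilde w$, thereby obtaining the same constant $L_G=\beta_F C_2$, whereas you bound the product directly via Lemma \ref{alg} and \eqref{2ndC-a}, which forces you to enlarge $L_G$ to absorb $2CC_1^3$. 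Your direct argument is equally valid and avoids the small technical point of checking $\tilde w+\lambda q\in B_{H^1}(\tilde w_0,r)$ for small $\lambda$.
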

\begin{proof}
We recall that
$r\in\left(0, \frac{\kappa}{2C}\right)$, $\kappa=\inf_{x\in{\Omega}}w_0>0$, $w_0=\tilde{w}_0+\theta_2$,  $\tilde{w}\in C\left([0, T]; B_{H^1}(\tilde{w}_0, r)\right)$, $w=\tilde{w}+\theta_2$.  For small $h\in(0, T)$ such that $t+h\in(0, T]$, $\left\|\tilde{w}(t+h)-\tilde{w}(t)\right\|_{H^1(\Omega)}\leq r$, Thus, \eqref{2ndHoldercontinuous} and \eqref{2ndLip-G} of  Lemma \ref{2ndLip-G-Lem} are valid by using \eqref{2ndC-a}, \eqref{2ndC-d}, respectively, in    Lemma \ref{2ndestimates} and setting $L_G=\beta_FC_2$.

In particular, for $\tilde{w}_1\in C\left([0, T]; B_{H^1}\left(\tilde{w}_0, r\right)\right)$, set $\tilde{w}_2(t)=\tilde{w}_0$, $ t\in[0, T]$,  then $[G(\tilde{w}_2)](t)=G(\tilde{w}_0)$. Hence \eqref{2ndLip-G-1} of   Lemma \ref{2ndLip-G-Lem} is valid because of  \eqref{2ndLip-G} in   Lemma \ref{2ndLip-G-Lem}.

We consider  $\tilde{w}_2=\tilde{w}\in C\left([0, T]; B_{H^1}\left(\tilde{w}_0, r\right)\right)$ and  choose small $\lambda\in\mathds{R}$ such that $\tilde{w}_1=\tilde{w}+\lambda q$ is in  $C\left([0, T]; B_{H^1}\left(\tilde{w}_0, r\right)\right) $ for all $q\in C\left([0, T]; H_0^1(\Omega)\right)$.
Then the Fr\'{e}chet derivative $G'\left(\tilde{w}\right)q$ of $G$ with respect to $\tilde{w}\in C\left([0, T]; B_{H^1}\left(\tilde{w}_0, r\right)\right) $ exists as a linear operator given by
\[G'(\tilde{w}): C\left([0, T]; H_0^1(\Omega)\right)\to C\left([0, T]; H_0^1(\Omega)\right),\ G'\left(\tilde{w}\right)q= \lim_{\lambda\rightarrow0}\frac{1}{\lambda}\left[G\left(\tilde{w}+\lambda q\right)-G\left(\tilde{w}\right)\right]= \frac{2\beta_F}{\left(\tilde{w}+\theta_2\right)^3}q,\]
\[[G'\left(\tilde{w}\right)q](t)=[G'\left(\tilde{w}(t)\right)]q(t)= \frac{2\beta_F}{\left(\tilde{w}(t)+\theta_2\right)^3}q(t).\]
According to  \eqref{2ndLip-G} of  Lemma \ref{2ndLip-G-Lem}, the inequality \eqref{2ndLip-G-2} of  Lemma \ref{2ndLip-G-Lem} holds by the following computation:
\begin{align}
\sup_{t\in[0, T]}\left\|\left[G'\left(\tilde{w}\right)q\right](t)\right\|_{H^1(\Omega)}=&\sup_{t\in[0, T]}\left\| \lim_{\lambda\rightarrow0}\frac{[G\left(\tilde{w}+\lambda q\right)](t)-[G\left(\tilde{w}\right)](t)}{\lambda}\right\|_{H^1(\Omega)}\notag\\
=& \lim_{\lambda\rightarrow0}\frac{1}{\lambda}\sup_{t\in[0, T]}\left\|[G\left(\tilde{w}_1\right)](t)-[G\left(\tilde{w}_2\right)](t)\right\|_{H^1(\Omega)}\notag\\
\leq& \lim_{\lambda\rightarrow0}\frac{1}{\lambda}L_G\sup_{t\in[0, T]}\left\| \tilde{w}_1(t)-\tilde{w}_2(t)\right\|_{H^{1}(\Omega)}
=L_G\sup_{t\in[0, T]}\left\|q(t)\right\|_{H^{1}(\Omega)}\notag.
\end{align}
For all $t\in[0, T]$, we choose small $h\in(0, T)$ and $\tau\in[0, 1]$ such that $t+h\in(0, T]$, then we obtain
\[\left\|\tilde{w}(t)+\tau\left[\tilde{w}(t+h)-\tilde{w}(t)\right]-\tilde{w}_0\right\|_{H^1(\Omega)}\leq r,\]
then for $\psi\in H_0^1(\Omega)$ with $\|\psi\|_{H^1(\Omega)}\leq1$, we find $G'(\tilde{w}(t)): H_0^1(\Omega)\longrightarrow H_0^1(\Omega)$ with $\psi\longmapsto [G(\tilde{w}(t))]\psi $. By the algebraic properties of $H^1(\Omega)$, together with \eqref{2ndC-a} and \eqref{2ndC-d} in    Lemma \ref{2ndestimates}, we have
\begin{align}
&\left\|G'\left(\tilde{w}(t)+\tau\left[\tilde{w}(t+h)-\tilde{w}(t)\right]\right)-G'(\tilde{w}(t))\right\|_{\mathcal{B}\left(H_0^1(\Omega)\right)}\notag\\
=&\left\|[G'\left(\tilde{w}(t)+\tau\left[\tilde{w}(t+h)-\tilde{w}(t)\right]\right)]\psi-[G'(\tilde{w}(t))]\psi\right\|_{H^1(\Omega)}\notag\\
\leq&2\beta_F\sup_{\begin{smallmatrix}0\leq t<t+h\leq T\\ 0\leq \tau\leq 1\end{smallmatrix}}\left\| \frac{1}{(\tilde{w}(t)+\tau\left[\tilde{w}(t+h)-\tilde{w}(t)\right]+\theta_2)^3}-\frac{1}{[\tilde{w}(t)+\theta_2]^3}\right\|_{H^1(\Omega)}\|\psi\|_{H^1(\Omega)}\notag\\
\leq&2\beta_FC_3\sup_{0\leq t<t+h\leq T}\|\tilde{w}(t+h)-\tilde{w}(t)\|_{H^1(\Omega)}\label{2nd002}.
\end{align}
Since $\tilde{w}\in C([0, T]; H_0^1(\Omega))$, $\tilde{w}$ is uniformly continuous with respect to $t\in[0, T]$, hence the assertion \eqref{2nduniformly-continuous-Fre-G} of  Lemma \ref{2ndLip-G-Lem} is proved by
\[\lim_{h\rightarrow 0^+}\sup_{0\leq t<t+h\leq T}\|\tilde{w}(t+\tau h)-\tilde{w}(t)\|_{H^1(\Omega)}=0.\]
This concludes the proof of  Lemma \ref{2ndLip-G-Lem}.
\end{proof}

\begin{lem}\label{2ndLip-nonlinearity}
	Let $u_0\in H^{2+\sigma}(\Omega)$ with $\sigma\in (0, 1/2)$
	and assume the operators $u\longmapsto v(u)$, $u\longmapsto w(u)$ given by, respectively,
	\[u\longmapsto v(u): C\left([0, T]; B_{H^2}\left(u_0, r\right)\right)\longrightarrow C\left([0, T]; B_{L^2}\left(v_0, r\right)\right),\]
	\[u\longmapsto w(u): C\left([0, T]; B_{H^2}\left(u_0, r\right)\right)\longrightarrow C\left([0, T]; B_{H^1}\left(w_0, r\right)\right),\]
	satisfy, for all $u_1$, $u_2\in  C\left([0, T]; B_{H^2}\left(u_0, r\right)\right)$,
	\[
	\sup_{t\in[0, T]}\|[v(u_1)](t)-[v(u_2)](t)\|_{L^2(\Omega)}\leq L_W \sup_{t\in[0, T]}\|{u}_1(t)-{u}_2(t)\|_{H^2(\Omega)},
	\]
	\[
	\sup_{t\in[0, T]}\|[w(u_1)](t)-[w(u_2)](t)\|_{H^1(\Omega)}\leq L_W \sup_{t\in[0, T]}\|{u}_1(t)-{u}_2(t)\|_{H^2(\Omega)}.
	\]
	Then $f(u)$, defined by
	\[u\longmapsto f(u):\ C([0, T]; H^2(\Omega))\longrightarrow C([0, T]; L^2(\Omega)),\quad  f(u)=\frac{1}{w(u)}\frac{\partial}{\partial x}\left([w(u)]^3u\frac{\partial u}{\partial x}\right)-\frac{v(u)}{w(u)}u,\]
	is Lipschitz continuous in $u$,
	\begin{align}
		\sup_{t\in[0, T]}\|[f(u_1)](t)-[f(u_2)](t)\|_{L^2(\Omega)}\leq L_e \sup_{t\in[0, T]}\|{u}_1(t)-{u}_2(t)\|_{H^2(\Omega)}.\label{2ndnonlinearity-Lip-0}
	\end{align}
	Here $L_W$ and $L_e$ are  Lipschitz constants, and $L_e$ depends on $L_W$, $\Omega$, $\kappa$, $\left\|u_0\right\|_{H^2(\Omega)}$, $\left\|v_0\right\|_{L^2(\Omega)}$ and $\left\|w_0\right\|_{H^1(\Omega)}$.
\end{lem}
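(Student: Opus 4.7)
The strategy is to split $f(u_1)-f(u_2)$ into its divergence part and its drift part, and then, by systematic telescoping, to reduce each piece to a sum of terms in which exactly one factor is a difference. The single-difference factors are controlled by the two hypothesised Lipschitz estimates for $v(\cdot)$ and $w(\cdot)$, together with the Sobolev algebra estimates of Lemmas \ref{alg}, \ref{2ndalg-1} and the auxiliary bounds \eqref{2ndC-a}, \eqref{2ndC-d} of Lemma \ref{2ndestimates}; the remaining factors are absorbed into uniform bounds coming from the ball constraint $u_i\in B_{H^2}(u_0,r)$, the bound $\|v(u_i)\|_{L^2}\leq\|v_0\|_{L^2}+r$, the algebra estimate $\|[w(u_i)]^3\|_{H^1}\leq C(\|w_0\|_{H^1}+r)^3$, and the pointwise lower bound \eqref{2ndw-lower-bound} on $w(u_i)$.

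Explicitly, I write $f(u_1)-f(u_2)=D_1-D_2$ with
$$D_1=\frac{1}{w(u_1)}\partial_x\!\bigl([w(u_1)]^3 u_1\partial_x u_1\bigr)-\frac{1}{w(u_2)}\partial_x\!\bigl([w(u_2)]^3 u_2\partial_x u_2\bigr),\quad D_2=\frac{v(u_1)\,u_1}{w(u_1)}-\frac{v(u_2)\,u_2}{w(u_2)}.$$
For the drift term $D_2$, the three-term telescoping
$$D_2=\frac{v(u_1)-v(u_2)}{w(u_1)}u_1+v(u_2)\Bigl(\tfrac{1}{w(u_1)}-\tfrac{1}{w(u_2)}\Bigr)u_1+\frac{v(u_2)}{w(u_2)}(u_1-u_2),$$
combined with \eqref{2ndalg-1-1}, the Sobolev embedding $H^2(\Omega)\hookrightarrow L^\infty(\Omega)$, the bounds \eqref{2ndC-a}--\eqref{2ndC-d} on $1/w(u_i)$, and the two hypothesised Lipschitz estimates on $v(\cdot)$ and $w(\cdot)$, yields $\|D_2(t)\|_{L^2}\leq K_1\|u_1(t)-u_2(t)\|_{H^2}$ with $K_1$ depending only on $L_W$, $\Omega$, $\kappa$, $\|u_0\|_{H^2}$, $\|v_0\|_{L^2}$, $\|w_0\|_{H^1}$.

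The divergence term $D_1$ is the main obstacle, since the outer derivative forces us to estimate products of four functions in $H^1$. I first peel off the outer factor:
$$D_1=\Bigl(\tfrac{1}{w(u_1)}-\tfrac{1}{w(u_2)}\Bigr)\partial_x\!\bigl([w(u_1)]^3 u_1\partial_x u_1\bigr)+\frac{1}{w(u_2)}\partial_x\!\Bigl([w(u_1)]^3 u_1\partial_x u_1-[w(u_2)]^3 u_2\partial_x u_2\Bigr),$$
and then telescope the inner difference into the three pieces
$$\bigl([w(u_1)]^3-[w(u_2)]^3\bigr)u_1\partial_x u_1+[w(u_2)]^3(u_1-u_2)\partial_x u_1+[w(u_2)]^3 u_2\,\partial_x(u_1-u_2).$$
Each of the resulting four contributions fits the structure of \eqref{alg-1-3} (or \eqref{2ndalg-1-2}) with exactly one factor replaced by a difference. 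Using \eqref{2ndC-d} for $\|1/w(u_1)-1/w(u_2)\|_{H^1}$, the factorisation $a^3-b^3=(a-b)(a^2+ab+b^2)$ together with the $H^1$-algebra property of Lemma \ref{alg} to bound $\|[w(u_1)]^3-[w(u_2)]^3\|_{H^1}$ by $C(\|w_0\|_{H^1}+r)^2\|w(u_1)-w(u_2)\|_{H^1}$, and finally the hypothesised Lipschitz bound $\|w(u_1)-w(u_2)\|_{H^1}\leq L_W\|u_1-u_2\|_{H^2}$, each contribution is controlled by $K_2\|u_1(t)-u_2(t)\|_{H^2}$, with $K_2$ depending only on $L_W$, $\Omega$, $\kappa$, $\|u_0\|_{H^2}$, $\|w_0\|_{H^1}$. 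Summing the estimates for $D_1$ and $D_2$ and taking the supremum over $t\in[0,T]$ produces \eqref{2ndnonlinearity-Lip-0} with $L_e:=K_1+K_2$, exhibiting exactly the stated dependence on the data.
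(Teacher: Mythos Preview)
Your proof is correct and follows essentially the same approach as the paper: telescoping both the divergence and drift parts into single-difference terms, then applying the Sobolev algebra estimates of Lemmas~\ref{alg}, \ref{2ndalg-1} and the bounds \eqref{2ndC-a}, \eqref{2ndC-d} of Lemma~\ref{2ndestimates} together with the assumed Lipschitz continuity of $v(\cdot)$ and $w(\cdot)$. The only cosmetic difference is that the paper rewrites $u\,\partial_x u=\tfrac{1}{2}\partial_x(u^2)$ before telescoping, which merges your third and fourth divergence terms into a single contribution controlled via \eqref{2ndAppB1}; also note that \eqref{2ndC-d} is stated only for $k=2,3$, so for $\|1/w(u_1)-1/w(u_2)\|_{H^1}$ you should invoke the elementary identity $\tfrac{1}{a}-\tfrac{1}{b}=\tfrac{b-a}{ab}$ together with \eqref{2ndC-a}, as the paper does in deriving \eqref{2ndAppB01}.
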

\begin{proof}
	Let $u_1,\ u_2\in C\left([0, T]; B_{H^2}(u_0, r)\right)$,  the definitions of the operators $u\longmapsto v(u)$ and $u\longmapsto w(u)$ imply that $(v_1, w_1)=(v(u_1), w(u_1))$, $(v_2, w_2)=(v(u_2), w(u_2))\in C\left([0, T]; B_{L^2}(v_0, r)\times B_{H^1}(w_0, r)\right)$.
	Set
	$\tilde{C}=\left\|w_0\right\|_{H^1(\Omega)}+\frac{\kappa}{2C}$, $\tilde{C_1}=\left\|u_0\right\|_{H^2(\Omega)}+\frac{\kappa}{2C}$, $\tilde{C_2}=\left\|v_0\right\|_{L^2(\Omega)}+\frac{\kappa}{2C}$.
	Thus for all  $ t\in[0, T]$, we get
	$\left\|u_1(t)\right\|_{H^2(\Omega)}\leq\tilde{C_1}$, $\left\|u_2(t)\right\|_{H^2(\Omega)}\leq\tilde{C_1}$, $\left\|v_1(t)\right\|_{L^2(\Omega)}\leq\tilde{C_2}$, $\left\|v_2(t)\right\|_{L^2(\Omega)}\leq\tilde{C_2}$,
	$\left\|w_1(t)\right\|_{H^1(\Omega)}\leq\tilde{C}$, $\left\|w_2(t)\right\|_{H^1(\Omega)}\leq\tilde{C}$.
	
	Because of estimate $\left\|w_1(t)-w_2(t)\right\|_{H^1(\Omega)}\leq L_W\left\|u_1(t)-u_2(t)\right\|_{H^2(\Omega)}$ for all $t\in[0, T]$ and $H^1(\Omega)$ is an algebra for $\Omega\subset\mathds{R}$,  \eqref{2ndC-a} and \eqref{2ndC-d} in    Lemma \ref{2ndestimates}, we obtain similar bounds for the functions $[w_1(t)]^{-1}-[w_2(t)]^{-1}$ and $[w_1(t)]^3-[w_2(t)]^3$:
	\be\label{2ndAppB01}
	\left\|[w_1(t)]^{-1}-[w_2(t)]^{-1}\right\|_{H^1(\Omega)}\leq{C_1^2L_W}\left\|u_1(t)-u_2(t)\right\|_{H^2(\Omega)},
	\ee
	\be\label{2ndAppB02}
	\left\|[w_1(t)]^3-[w_2(t)]^3\right\|_{H^1(\Omega)}\leq3\tilde{C}^2L_W\left\|u_1(t)-u_2(t)\right\|_{H^2(\Omega)}.
	\ee
	Similarly, the algebraic property of $H^2(\Omega)$, i.e. Lemma \ref{alg}, implies
	\begin{align}
		\left\|[u_1(t)]^2-[u_2(t)]^2\right\|_{H^2(\Omega)}\leq&2\tilde{C}_1\left\|u_1(t)-u_2(t)\right\|_{H^2(\Omega)} . \label{2ndAppB1}
	\end{align}
	The algebraic properties of $H^1(\Omega)$, i.e. Lemma \ref{alg}, \eqref{2ndalg-1-2} of Lemma \ref{2ndalg-1} imply
	\begin{align}
		& \frac{1}{2}\left\|\left[\frac{1}{w_1(t)}-\frac{1}{w_2(t)}\right]\frac{\partial}{\partial x}\left\{[w_1(t)]^3\frac{\partial[u_1(t)]^2}{\partial x} \right\}\right\|_{L^2(\Omega)}\notag\\
		\leq&C \left\|[w_1(t)]^{-1}-[w_2(t)]^{-1}\right\|_{H^2(\Omega)}\left\|[w_1(t)]^3\right\|_{H^2(\Omega)}\left\|[u_1(t)]^2\right\|_{H^2(\Omega)}\notag\\
		\leq&  CC_1^2\tilde{C}^3\tilde{C}_1^2L_W\left\|u_1(t)-u_2(t)\right\|_{H^2(\Omega)}\notag.
	\end{align}
	Similarly, setting $\widehat{C}_1=3CC_1(\tilde{C}\tilde{C}_1)^2L_W$ and $\widehat{C}_2=2CC_1\tilde{C}^3\tilde{C}_1$, it follows that 
	\[ \frac{1}{2}\left\|\frac{1}{w_2(t)}\frac{\partial}{\partial x}\left\{\left([w_1(t)]^3-[w_2(t)]^3\right)\frac{\partial[u_1(t)]^2}{\partial x}\right\}\right\|_{L^2(\Omega)}\leq\widehat{C}_1\left\|u_1(t)-u_2(t)\right\|_{H^2(\Omega)},\]
	\[\frac{1}{2}\left\|\frac{1}{w_2(t)}\frac{\partial}{\partial x}\left\{[w_2(t)]^3\frac{\partial}{\partial x}\left([u_1(t)]^2-[u_2(t)]^2\right)\right\}\right\|_{L^2(\Omega)}
	\leq\widehat{C}_2\left\|u_1(t)-u_2(t)\right\|_{H^2(\Omega)}.\]
	Because of \eqref{2ndAppB01}, $\left\|v_1(t)-v_2(t)\right\|_{H^1(\Omega)}\leq L_W\left\|u_1(t)-u_2(t)\right\|_{H^2(\Omega)}$, $H^1(\Omega)$ being an algebra and \eqref{2ndalg-1-1} in Lemma \ref{2ndalg-1}, setting $\widehat{C}_3=C\tilde{C}_2C_1+C\tilde{C}_1C_1L_W+C\tilde{C}_1\tilde{C}_2C_1^2L_W$, we obtain 
	\begin{align}
		\left\| \frac{v_1(t)}{w_1(t)}u_1(t)- \frac{v_2(t)}{w_2(t)}u_2(t)\right\|_{L^2(\Omega)}
		\leq&\left\|v_1(t)\right\|_{L^2(\Omega)}\left\|[w_1(t)]^{-1}\right\|_{H^1(\Omega)}\left\|u_1(t)-u_2(t)\right\|_{H^2(\Omega)}\notag\\
		+&\left\|u_2(t)\right\|_{H^2(\Omega)}\left\|[w_1(t)]^{-1}\right\|_{H^1(\Omega)}\left\|v_1(t)-v_2(t)\right\|_{L^2(\Omega)}\notag\\
		+&\left\|u_2(t)\right\|_{H^2(\Omega)}\left\|v_2(t)\right\|_{L^2(\Omega)}\left\|[w_1(t)]^{-1}-[w_2(t)]^{-1}\right\|_{H^1(\Omega)}\notag\\
		\leq&\widehat{C}_3\left\|u_1(t)-u_2(t)\right\|_{H^2(\Omega)}\notag.
	\end{align}
	Consequently, \eqref{2ndnonlinearity-Lip-0} holds by setting $L_e=CC_1^2\tilde{C}^3\tilde{C}_1^2L_W+\widehat{C}_1+\widehat{C}_2+\widehat{C}_3$ and computing
	
	\begin{align}
		\|[f(u_1)](t)-[f(u_2)](t)\|_{L^2(\Omega)}\leq& \frac{1}{2}\left\|\left[\frac{1}{w_1(t)}-\frac{1}{w_2(t)}\right]\frac{\partial}{\partial x}\left\{[w_1(t)]^3\frac{\partial[u_1(t)]^2}{\partial x}\right\}\right\|_{L^2(\Omega)}\notag\\
		+& \frac{1}{2}\left\|\frac{1}{w_2(t)}\frac{\partial}{\partial x}\left\{\left([w_1(t)]^3-[w_2(t)]^3\right)\frac{\partial[u_1(t)]^2}{\partial x}\right\}\right\|_{L^2(\Omega)}\notag\\
		+& \frac{1}{2}\left\|\frac{1}{w_2(t)}\frac{\partial}{\partial x}\left\{[w_2(t)]^3\frac{\partial}{\partial x}\left([u_1(t)]^2-[u_2(t)]^2\right)\right\}\right\|_{L^2(\Omega)}\notag\\
		+&\left\| \frac{v_1(t)}{w_1(t)}u_1(t)-\frac{v_2(t)}{w_2(t)}u_2(t)\right\|_{L^2(\Omega)}
		\leq L_e\left\|u_1(t)-u_2(t)\right\|_{H^2(\Omega)}.\notag
	\end{align}
	This concludes the proof of    Lemma  \ref{2ndLip-nonlinearity}.
\end{proof}

\section{Proofs of Results in Section \ref{2nd-order problem}}\label{AppB-1}
\subsection{Proof of Lemma \ref{2equivalence-1}}
\begin{proof}
	The proof follows closely the proof of Lemma 6.10 in the reference \cite{schnaubelt}. We here describe
	the necessary adaptations.
		
	Let
	$\tilde{w}\in C^2([0, T]; L^2(\Omega))\cap C^1([0, T]; H_0^1(\Omega))\cap C([0, T]; H^2(\Omega)\cap H_0^1(\Omega))$
	solve the semilinear hyperbolic equation \eqref{2nd-LWE-1}. Then
	$\Phi:=\left(\tilde{w}', \tilde{w}\right)\in C^1([0, T]; \mathscr{X})$, $\Phi(t)=\left(\tilde{w}'(t), \tilde{w}(t)\right)\in D(\mathcal{A})$ for all $ t\in [0, T]$, $\Phi\in C([0, T]; D(\mathcal{A}))$ and
	\begin{align}
		\Phi'(t)=\begin{pmatrix}\tilde{w}^{''}(t)\\ \tilde{w}'(t)\end{pmatrix}
		=\begin{pmatrix}\Delta_D\tilde{w}(t)- \frac{\beta_F}{(\tilde{w}(t)+\theta_2)^2}+\beta_p(\tilde{u}(t)+\theta_1-1)\\ \tilde{w}'(t)\end{pmatrix}
		=\mathcal{A}\Phi(t)+[\mathcal{G}(\Phi)](t)\notag
	\end{align}
	 for all $t\in [0, T]$. Moreover, $\Phi(0)=\left(\tilde{w}'(0), \tilde{w}(0)\right)=\left(\tilde{v}_0, \tilde{w}_0\right)=\Phi_0.$
	
	Therefore, $\Phi\in C([0, T]; D(\mathcal{A}))\cap C^1([0, T]; \mathscr{X})$ solves the equation \eqref{2nd-IEE}.
	
	Conversely, let $\Phi=\left(\varphi_1, \varphi_2\right)\in C^1([0, T]; \mathscr{X})\cap C([0, T]; D(\mathcal{A}))$ solve the semilinear  evolution equation \eqref{2nd-IEE}. We set $\tilde{w}:=\varphi_2$, obtaining $\tilde{w}\in C^1([0, T]; H_0^1(\Omega)),\ \tilde{w}(t)\in H^2(\Omega)\cap H_0^1(\Omega)$, $\forall\ t\in [0, T]$, and $\tilde{w}\in C([0, T]; H^2(\Omega)\cap H_0^1(\Omega))$. It further follows, 
	\begin{align}
		\begin{pmatrix}\varphi'_1(t)\\ \tilde{w}'(t)\end{pmatrix}=\mathcal{A}\begin{pmatrix}\varphi_1(t)\\ \tilde{w}(t)\end{pmatrix}+[\mathcal{G}(\Phi)](t)
		=\begin{pmatrix}\Delta_D\tilde{w}(t)- \frac{\beta_F}{(\tilde{w}(t)+\theta_2)^2}+\beta_p(\tilde{u}(t)+\theta_1-1)\\ \varphi_1(t)\end{pmatrix},\quad \forall\ t\in [0, T].\notag
	\end{align}
	As a result, $\tilde{w}'=\varphi_1\in C^1([0, T]; L^2(\Omega))$ and $\Phi=\left(\tilde{w}', \tilde{w}\right)$, so that $\tilde{w}\in C^2([0, T]; L^2(\Omega))$ as well as
	 $\left(\tilde{w}'(0), \tilde{w}(0)\right)=\left(\tilde{v}_0, \tilde{w}_0\right)$. So $\tilde{w}\in C^2([0, T]; L^2(\Omega))\cap C^1([0, T]; H_0^1(\Omega))\cap C([0, T]; H^2(\Omega)\cap H_0^1(\Omega))$ solves the semilinear hyperbolic equation \eqref{2nd-LWE-1}.
	
	This equivalence also yields that  solutions to the semilinear evolution equation \eqref{2nd-IEE} are unique if and only if  solutions to the semilinear equation \eqref{2nd-LWE-1} are unique.
\end{proof}

\subsection{Proof of Lemma \ref{2ndgenerator}}
\begin{proof}
	We aim to show that $\mathcal{A}$, defined by \eqref{2A-1}, is skew adjoint on the Hilbert space $\mathscr{X}$ defined by \eqref{2state-space}, and thus generates a strongly continuous semigroup ($C_0$-semigroup) on $\mathscr{X}$ by using Stone's Lemma (see  Theorem 3.24, Section 3, Chapter II, \cite{EK}).
	
	The following proof is analogous to the discussion from Section 5 (Example 5.13) of  reference \cite{schnaubelt}.
	
	From the definition \eqref{2A-1} of $\mathcal{A}$, $\mathcal{A}$ is densely defined in $\mathscr{X}$, i.e. $\overline{D(\mathcal{A})}=\mathscr{X}$,  then $\mathcal{A}$ is skew symmetric (i.e. $\mathbf{i}\mathcal{A}$ is symmetric) for any two $(\phi_1, \phi_2)\in D(\mathcal{A})$ and $(\psi_1, \psi_2 )\in D(\mathcal{A})$ by the following computations:
	\begin{align}
		\left\langle\mathcal{A}\begin{pmatrix}\phi_1\\ \phi_2\end{pmatrix}, \begin{pmatrix}\psi_1\\ \psi_2 \end{pmatrix}\right\rangle_{\mathscr{X}}&=\left\langle\begin{pmatrix}\Delta_D\phi_2\\ \phi_1 \end{pmatrix}, \begin{pmatrix}\psi_1\\ \psi_2 \end{pmatrix}\right\rangle_{\mathscr{X}}
		= \int_\Omega(\Delta_D\phi_2)\cdot\overline{\psi_1}+\nabla\phi_1\cdot\nabla\overline{\psi_2}dx\notag\\
		&= \int_\Omega -\nabla\phi_2\cdot\nabla\overline{\psi_1}+\nabla\phi_1\cdot\nabla\overline{\psi_2}dx
		=- \left(\int_\Omega \nabla\phi_2\cdot\nabla\overline{\psi_1}-\nabla\phi_1\cdot\nabla\overline{\psi_2}dx\right)\notag\\
		&=- \left(\int_\Omega \nabla\phi_2\cdot\nabla\overline{\psi_1}+\phi_1\cdot(\overline{\Delta_D\psi_2})dx\right)
		=-\left\langle\begin{pmatrix}\phi_1\\ \phi_2 \end{pmatrix}, \begin{pmatrix}\Delta_D\psi_2\\ \psi_1 \end{pmatrix}\right\rangle_{\mathscr{X}}\notag\\
		&=-\left\langle\begin{pmatrix}\phi_1\\ \phi_2 \end{pmatrix}, \mathcal{A}\begin{pmatrix}\psi_1\\ \psi_2 \end{pmatrix}\right\rangle_{\mathscr{X}} . \notag
	\end{align}
	Furthermore, $\text{Re}\left\langle\mathcal{A}\begin{pmatrix}\psi\\ \phi\end{pmatrix}, \begin{pmatrix}\psi\\ \phi \end{pmatrix}\right\rangle_{\mathscr{X}}=0$ for all $(\psi, \phi )\in D(\mathcal{A})$, so $\mathcal{A}$ is dissipative. By using the Lax-Milgram Theorem (Theorem 1, Section 6.2, \cite{EL}), we have the inverse $\Delta_D^{-1}$ of $\Delta_D$ exists, thus we define an operator
	\[\mathcal{R}=\begin{pmatrix}0 &1\\ \Delta_D^{-1} &0\end{pmatrix}.\]
	Then
	\[\mathcal{R}\mathscr{X}\subset D(\mathcal{A}),\ \mathcal{A}\mathcal{R}=I, \ \mathcal{R}\mathcal{A}\begin{pmatrix}\psi\\ \phi \end{pmatrix}=\begin{pmatrix}\psi\\ \phi \end{pmatrix},\ \forall\ \left(\psi, \phi \right)\in D(\mathcal{A}).\]
	Therefore, $\mathbf{i}\mathcal{A}$ is invertible and the resolvent set $\rho(\mathbf{i}\mathcal{A})$ of $\mathbf{i}\mathcal{A}$ satisfies $\rho(\mathbf{i}\mathcal{A})\cap \mathds{R}\neq\emptyset$, so the spectrum $\sigma(\mathbf{i}\mathcal{A})\subseteq\mathds{R}$, consequently, $\mathbf{i}\mathcal{A}$ is selfadjoint, as a result, $\mathcal{A}$ is skew adjoint. According to Stone's Lemma, we have the linear operator $\mathcal{A}$ generates a $C_0$-semigroup
	$\left\{T(t)\in \mathcal{B}(\mathscr{X}):\ t\in[0, \infty)\right\}$.
\end{proof}

\subsection{Proof of Theorem \ref{2nd-solu-thm}}
\begin{proof}
	We let $T\in(0, \infty)$ be taken to be specified below and define a complete metric space  $Z(T)$ for the metric induced by the norm $ \sup_{t\in[0, T]}\|(\tilde{v}(t),\tilde{w}(t))\|_{L^2(\Omega)\times H^{1}(\Omega)}$ as follows:
	\begin{align*}
		Z(T):=\left\{\hspace*{-0.1cm}\begin{pmatrix}\tilde{v}\\ \tilde{w}\end{pmatrix}\hspace*{-0.1cm}\in C([0,T]; L^2(\Omega)\times H_0^1(\Omega)):
		\begin{pmatrix}\tilde{v}(0)\\ \tilde{w}(0)\end{pmatrix}=\begin{pmatrix}\tilde{v}_0\\ \tilde{w}_0\end{pmatrix}, 
		\sup_{t\in[0, T]}\left\|\begin{pmatrix}\tilde{v}(t)-\tilde{v}_0\\ \tilde{w}(t)-\tilde{w}_0\end{pmatrix}\right\|_{L^2\times H^{1}(\Omega)}\hspace*{-0.1cm}\leq r\right\}.
	\end{align*}
	Because $\mathcal{A}$ generates a strongly continuous semigroup $\left\{T(t)\in\mathcal{B}\left(L^2(\Omega)\times H_0^1(\Omega)\right): t\in[0, \infty)\right\}$ and $[G\left(\tilde{w}\right)](t)=-\beta_F[\tilde{w}(t)+\theta_2]^{-2}+\beta_p(\theta_1-1)$, we introduce a nonlinear operator $\Phi$ on $Z(T)$ by
	\[\left[\Phi(\tilde{v},\tilde{w})\right](t)
	:=T(t)\begin{pmatrix}\tilde{v}_0\\ \tilde{w}_0\end{pmatrix}+
	 \int_0^t\left\{T(t-s)\begin{pmatrix}[G(\tilde{w})](s)+\beta_p\tilde{u}(s)\\ 0\end{pmatrix}\right\}ds,\quad \forall\ t\in [0,T].\]
	We observe that
	\[\left[\Phi(\tilde{v},\tilde{w})\right](0)=T(0)\begin{pmatrix}\tilde{v}_0\\ \tilde{w}_0\end{pmatrix}=\begin{pmatrix}\tilde{v}_0\\ \tilde{w}_0\end{pmatrix}\in D(\mathcal{A}).\]
	According to  Lemma 1.3 of Chapter II in reference \cite{EK},
	\[T(t)\begin{pmatrix}\tilde{v}_0\\ \tilde{w}_0\end{pmatrix}\in D(\mathcal{A}),\quad \forall\ t\in[0, T].\]
	Since $(\tilde{v},\tilde{w})\in Z(T)$, $\tilde{u}\in C\left([0, T]; B_{H^2}\left(\tilde{u}_0, r\right)\right)$ such that
	$G(\tilde{w})+\beta_p\tilde{u}\in C([0, T]; H^1(\Omega))$, hence
	\[\begin{pmatrix}[G(\tilde{w})](t)+\beta_p\tilde{u}(t)\\ 0\end{pmatrix},\quad \int_0^t\left\{T(t-s)\begin{pmatrix}[G(\tilde{w})](s)+\beta_p\tilde{u}(s)\\ 0\end{pmatrix}\right\}ds\in L^2(\Omega)\times H_0^1(\Omega).\]
	Therefore, $\Phi$ is a nonlinear operator which maps $Z(T)$ into $C\left([0,T]; L^2(\Omega)\times H_0^1(\Omega)\right)$:
	\[\Phi: \ Z(T)\longrightarrow\ C\left([0,T]; L^2(\Omega)\times H_0^1(\Omega)\right). \]
	We next show that there exists a unique mild solution $(\tilde{v},\tilde{w})\in Z(T)$ of the semilinear evolution equation \eqref{2nd-SWE-1} which is a fixed point of $\Phi$ on $Z(T)$.
	
	We write $M_0= \sup_{t\in [0, \infty )}\|T(t)\|_{\mathcal{B}\left(L^2(\Omega)\times H_0^1(\Omega)\right)}$ an operator norm of $\left\{T(t)\right\}_{0\leq t<\infty}$ on the space $L^2(\Omega)\times H_0^1(\Omega)$. For given $\tilde{u}\in C\left([0, T]; B_{H^2}(\tilde{u}_0, r)\right)$, if $(\tilde{v}_1,\tilde{w}_1),\ (\tilde{v}_2,\tilde{w}_2)\in Z(T)$, then 
	\[[G(\tilde{w}_1)](t)-[G(\tilde{w}_2)](t)\in H^1(\Omega), \quad \forall\ t\in[0, T].\] 
	By using the estimate \eqref{2ndLip-G} of    Lemma \ref{2ndestimates},  we obtain
	\begin{align}
		&\sup_{t\in[0, T]}\left\|[\Phi(\tilde{v}_1,\tilde{w}_1)](t)-[\Phi(\tilde{v}_2,\tilde{w}_2)](t)\right\|_{L^2(\Omega)\times H^{1}(\Omega)}\notag\\
		=&\sup_{t\in[0, T]}\left\|\int_0^tT(t-s)\begin{pmatrix}[G(\tilde{w}_1)](s)+\beta_p\tilde{u}(s)-[G(\tilde{w}_2)](s)-\beta_p\tilde{u}(s)\\ 0\end{pmatrix}ds\right\|_{L^{2}(\Omega)\times H^{1}(\Omega)}\notag\\
		\leq&TM_0\sup_{t\in[0, T]}\left\|[G(\tilde{w}_1)](t)-[G(\tilde{w}_2)](t)\right\|_{L^{2}(\Omega)}
		\leq TM_0\sup_{t\in[0, T]}\left\|[G(\tilde{w}_1)](t)-[G(\tilde{w}_2)](t)\right\|_{H^{1}(\Omega)}\notag\\
		\leq&TM_0L_G\sup_{t\in[0, T]}\left\| \tilde{w}_1(t)-\tilde{w}_2(t)\right\|_{H^{1}(\Omega)}
		\leq TM_0L_G\sup_{t\in[0, T]}\left\|\begin{pmatrix}\tilde{v}_1(t)-\tilde{v}_2(t)\\ \tilde{w}_1(t)-\tilde{w}_2(t)\end{pmatrix}\right\|_{L^{2}(\Omega)\times H^{1}(\Omega)}\label{2difference-1}.
	\end{align}
	Because $\{T(t)\in\mathcal{B}\left(L^2(\Omega)\times H_0^1(\Omega)\right):\ t\geq 0\}$ is a strongly continuous semigroup,  according to the definition of strong continuity, for $(\tilde{v}_0, \tilde{w}_0)\in D(\mathcal{A})$ and given constant $r\in\left(0, \frac{\kappa}{2C}\right)$, there exists $\delta_o=\delta_o(r)>0$, such that if $0<t\leq\delta_o$, then
	\be\label{2semigp-continuity}
	0<\left\|T(t)\begin{pmatrix}\tilde{v}_0\\ \tilde{w}_0\end{pmatrix}-\begin{pmatrix}\tilde{v}_0\\ \tilde{w}_0\end{pmatrix}\right\|_{L^2(\Omega)\times H^{1}(\Omega)}\leq \frac{r}{2} .
	\ee
	Since $r\in\left(0, \frac{\kappa}{2C}\right)$ and $C=C(\Omega)$ is a constant, $\delta_o$ depends on $\kappa$ and $\Omega$, i.e. $\delta_o=\delta_o(\kappa, \Omega)$.
	
	 $\tilde{u}\in C\left([0, T]; B_{H^2}\left(\tilde{u}_0, r\right)\right)$ and $(\tilde{v}_1,\tilde{w}_1)\in Z(T)$ imply that  $\tilde{v}_1(t)\in L^2(\Omega)$, $\tilde{w}_1(t)\in H_0^1(\Omega)$ and $\tilde{u}(t)\in H^2(\Omega)\cap H_0^1(\Omega)$ with $\left\|\tilde{u}(t)-\tilde{u}_0\right\|_{H^2(\Omega)}\leq r$,
	thus $[G(\tilde{w}_1)](t)+\beta_p\tilde{u}(t)\in H^1(\Omega)$ for all $ t\in[0, T]$.
	
	Because $G_0=G(\tilde{w}_0)+\beta_p\tilde{u}_0\in H^1(\Omega)$, the inequality \eqref{2ndLip-G-1} of    Lemma \ref{2ndestimates} implies
	\begin{align}
		&\sup_{t\in[0, T]}\left\|\left[\Phi(\tilde{v}_1,\tilde{w}_1)\right](t)-\begin{pmatrix}\tilde{v}_0\\ \tilde{w}_0\end{pmatrix}\right\|_{L^2(\Omega)\times H^{1}(\Omega)}\notag\\
		=&\sup_{t\in[0, T]}\left\|T(t)\begin{pmatrix}\tilde{v}_0\\ \tilde{w}_0\end{pmatrix}-\begin{pmatrix}\tilde{v}_0\\ \tilde{w}_0\end{pmatrix}+ \int_0^t\left\{T(t-s)\begin{pmatrix}[G(\tilde{w}_1)](s)+\beta_p\tilde{u}(s)\\ 0\end{pmatrix}\right\}ds\right\|_{L^2(\Omega)\times H^{1}(\Omega)}\notag\\
		\leq&\sup_{t\in[0, T]}\left\|T(t)\begin{pmatrix}\tilde{v}_0\\ \tilde{w}_0\end{pmatrix}-\begin{pmatrix}\tilde{v}_0\\ \tilde{w}_0\end{pmatrix}\right\|_{L^2(\Omega)\times H^{1}(\Omega)}+  TM_0\left\|G_0\right\|_{ L^{2}(\Omega)}\notag\\
		+&  TM_0\sup_{t\in[0, T]}\left\|[G(\tilde{w}_1)](t)-G(\tilde{w}_0)\right\|_{L^2(\Omega)}+TM_0\sup_{t\in[0, T]}\left\|\tilde{u}(t)-\tilde{u}_0\right\|_{L^2(\Omega)}\notag\\
		\leq&\sup_{t\in[0, T]}\left\|T(t)\begin{pmatrix}\tilde{v}_0\\ \tilde{w}_0\end{pmatrix}-\begin{pmatrix}\tilde{v}_0\\ \tilde{w}_0\end{pmatrix}\right\|_{L^2(\Omega)\times H^{1}(\Omega)}+  TM_0\left\|G_0\right\|_{ H^{1}(\Omega)}\notag\\
		+&  TM_0\sup_{t\in[0, T]}\left\|[G(\tilde{w}_1)](t)-G(\tilde{w}_0)\right\|_{H^1(\Omega)}+TM_0\sup_{t\in[0, T]}\left\|\tilde{u}(t)-\tilde{u}_0\right\|_{H^1(\Omega)}\notag\\
		\leq&\sup_{t\in[0, T]} \left\|T(t)\begin{pmatrix}\tilde{v}_0\\ \tilde{w}_0\end{pmatrix}-\begin{pmatrix}\tilde{v}_0\\ \tilde{w}_0\end{pmatrix}\right\|_{L^2(\Omega)\times H^{1}(\Omega)}+TM_0\left(\left\|G_0\right\|_{ H^{1}(\Omega)}+ \left(L_G+1\right) r\right)\label{2estimate of phi-1}.
	\end{align}
	For fixed small $r$ given in $\left(0, \kappa/(2C)\right)$,  there exists a number $T_0>0$,
	\be\label{2T-0}
	T_0=\min\left\{\delta_o,\ \frac{1}{2M_0L_G},\
	 \frac{\kappa}{2M_0}\left[\left(L_G+1\right)\kappa+2C\left\|G_0\right\|_{ H^{1}(\Omega)}\right]^{-1}\right\},
	\ee
	such that for every $T\in (0,T_0)$, it follows that
	\[\sup_{t\in[0, T]}\left\|\left[\Phi(\tilde{v}_1,\tilde{w}_1)\right](t)-\begin{pmatrix}\tilde{v}_0\\ \tilde{w}_0\end{pmatrix}\right\|_{L^2(\Omega)\times H^{1}(\Omega)}\leq r,\]
	\[\sup_{t\in[0, T]}\|[\Phi(\tilde{v}_1,\tilde{w}_1)](t)-[\Phi(\tilde{v}_2,\tilde{w}_2)](t)\|_{L^2(\Omega)\times H^{1}(\Omega)}
	\leq \frac{1}{2}\sup_{t\in[0, T]}\left\|\begin{pmatrix}\tilde{v}_1(t)-\tilde{v}_2(t)\\ \tilde{w}_1(t)-\tilde{w}_2(t)\end{pmatrix}\right\|_{L^{2}(\Omega)\times H^{1}(\Omega)}.\]
	Thus $\Phi(\tilde{v}_1,\tilde{w}_1)\in z(T)$ for $(\tilde{v}_1,\tilde{w}_1)\in Z(T)$,  $\Phi(\tilde{v}, \tilde{w})$ is Lipschitz continuous on the bounded set $Z(T)$ with Lipschitz constant smaller than or equal to $1/2$, and $\Phi(\tilde{v}, \tilde{w})$ is a contraction map which maps $Z(T)$ into itself. 
	
	According to the Banach fixed point theorem, for each $T\in(0, T_0)$, there exists a unique fixed point $(\tilde{v}_T, \tilde{w}_T)\in Z(T)$, such that $(\tilde{v}_T,\tilde{w}_T)=\Phi(\tilde{v}_T,\tilde{w}_T)$ for given $\tilde{u}\in C\left([0, T]; B_{H^2}\left(\tilde{u}_0, r\right)\right)$.
	
	Hence, $(\tilde{v}_T, \tilde{w}_T)\in Z(T)$ is the unique mild solution of the semilinear evolution equation \eqref{2nd-SWE-1} on $[0, T]$, and $(\tilde{v}_T,\tilde{w}_T)$ satisfies the integral formulation \eqref{2ndmild-solu-form}. Due to the uniqueness of the fixed point, we set $(\tilde{v}, \tilde{w})=(\tilde{v}_{T},\tilde{w}_{T})$ and note that $(\tilde{v}_T, \tilde{w}_T)$ is the restriction $(\tilde{v}|_{[0, T]}, \tilde{w}|_{[0, T]})\in Z(T)$ of $(\tilde{v}, \tilde{w})$. As a result, Theorem~\ref{2nd-solu-thm}
is proved.  
\end{proof}

\subsection{Proof of Corollary \ref{2Lip-mild-solu}}
\begin{proof}
	Take $0\leq t<t+h\leq T$. Equation \eqref{2ndmild-solu-form} leads to
	\begin{align}
		\begin{pmatrix}\tilde{v}(t+h)-\tilde{v}(t)\\ \tilde{w}(t+h)-\tilde{w}(t)\end{pmatrix}=&T(t)\left[T(h)\begin{pmatrix}\tilde{v}_0\\ \tilde{w}_0\end{pmatrix}-\begin{pmatrix}\tilde{v}_0\\ \tilde{w}_0\end{pmatrix}\right]+ \int_0^hT(t+h-s)\begin{pmatrix}[G(\tilde{w})](s)+\beta_p\tilde{u}(s)\\ 0\end{pmatrix}ds\notag\\
		+& \int_0^tT(t-s)\begin{pmatrix}[G(\tilde{w})](s+h)-[G(\tilde{w})](s)+\beta_p[\tilde{u}(s+h)-\tilde{u}(s)]\\ 0\end{pmatrix}ds\notag\\
		=&  \int_0^hT(t+s)\mathcal{A}\begin{pmatrix}\tilde{v}_0\\ \tilde{w}_0\end{pmatrix}ds+ \int_0^hT(t+h-s)\begin{pmatrix}[G(\tilde{w})](s)+\beta_p\tilde{u}(s)\\ 0\end{pmatrix}ds\notag\\
		+& \int_0^tT(t-s)\begin{pmatrix}[G(\tilde{w})](s+h)-[G(\tilde{w})](s)+\beta_p[\tilde{u}(s+h)-\tilde{u}(s)]\\ 0\end{pmatrix}ds.\label{2diff-mild-solution}
	\end{align}
	Recall that $\mathscr{X}=L^2(\Omega)\times H_0^1(\Omega)$ and observe that
	\be\label{2Lip-est-1}
	M_0=\sup_{t\in[0, \infty)}\left\|T(t)\right\|_{\mathcal{B}(\mathscr{X})}, \qquad \left\|\mathcal{A}\begin{pmatrix}\tilde{v}_0\\ \tilde{w}_0\end{pmatrix}\right\|_{L^2(\Omega)\times H^1(\Omega)}\leq \left\|\left(\tilde{v}_0, \tilde{w}_0\right)\right\|_{D(\mathcal{A})}.
	\ee
	Fixing $T\in (0, T_0)$ and $\tilde{u}\in C\left([0, T]; B_{H^2}\left(\tilde{u}_0,  r\right)\right)$, the semilinear evolution equation \eqref{2nd-SWE-1} has a unique mild solution $\left(\tilde{v}, \tilde{w}\right)\in Z(T)$, and by using \eqref{2ndLip-G-1} in   Lemma \ref{2ndLip-G-Lem}, we have
	\begin{align}
		\sup_{t\in[0, T]}\left\|\begin{pmatrix}[G(\tilde{w})](t)+\beta_p\tilde{u}(t)\\ 0\end{pmatrix}\right\|_{L^2(\Omega)\times H^1(\Omega)}
		&\leq \sup_{t\in[0, T]}\left\|[G(\tilde{w})](t)+\beta_p\tilde{u}(t)\right\|_{H^1(\Omega)}\notag\\
		&\leq \sup_{t\in[0, T]}\left\|[G(\tilde{w})](t)-G(\tilde{w}_0)\right\|_{H^1(\Omega)}\notag\\
		&+\beta_p\sup_{t\in[0, T]}\left\|\tilde{u}(t)-\tilde{u}_0\right\|_{H^1(\Omega)}+\left\|G_0\right\|_{H^{1}(\Omega)}\notag\\
		&\leq \left(L_G+1\right) r+\left\|G_0\right\|_{H^{1}(\Omega)}\notag\\
		&\leq  \frac{\kappa \left(L_G+1\right)}{2C}+\left\|G_0\right\|_{H^{1}(\Omega)}\label{2ndLip-est-2}.
	\end{align}
	Here $L_G$ is given by   Lemma \ref{2ndLip-G-Lem} and $G_0=G(\tilde{w}_0)+\beta_p\tilde{u}_0\in H^1(\Omega)$.
	
	Moreover, $\tilde{u}(s+h)$, $\tilde{u}(s)\in H^2(\Omega)\cap H_0^1(\Omega)$, $\tilde{w}(s+h)$, $\tilde{w}(s)\in H_0^1(\Omega)$, $\forall\ 0\leq s< s+h \leq t\leq T$, therefore,
	$[G(\tilde{w})](s+h)+\beta_p\tilde{u}(s+h)-[G(\tilde{w})](s)-\beta_p\tilde{u}(s)\in H^1(\Omega).$
	
	Since $\tilde{u}\in C^1([0, T_0); L^2(\Omega))$, then, for all $ T\in(0, T_0)$, we find
	\begin{align}
		\sup_{t\in[0, T]} \int_0^t\left\|\tilde{u}(s+h)-\tilde{u}(s)\right\|_{L^2(\Omega)}ds\leq& T_0\sup_{\begin{smallmatrix}0\leq s\leq T,\\ 0\leq s+\sigma h\leq T\end{smallmatrix}}h\int_0^1\left\|\tilde{u}'(s+\sigma h)\right\|_{L^2(\Omega)}d\sigma\notag\\
		\leq&T_0h\left\|\tilde{u}\right\|_{C^1\left([0, T_0); L^2(\Omega)\right)}.\notag
	\end{align}
	According to  inequality \eqref{2ndHoldercontinuous} of   Lemma \ref{2ndLip-G-Lem}, we have
	\begin{align}
		&\left\| \int_0^tT(t-s)\begin{pmatrix}[G(\tilde{w})](s+h)-[G(\tilde{w})](s)+\beta_p[\tilde{u}(s+h)-\tilde{u}(s)]\\ 0\end{pmatrix}ds\right\|_{L^2(\Omega)\times H^1(\Omega)}\notag\\
		=& M_0 \int_0^t\left\|[G(\tilde{w})](s+h)-[G(\tilde{w})](s)+\beta_p[\tilde{u}(s+h)-\tilde{u}(s)]\right\|_{L^2(\Omega)}ds\notag\\
		\leq& M_0 \int_0^t\beta_p\left\|\tilde{u}(s+h)-\tilde{u}(s)\right\|_{L^2(\Omega)}
		+\left\|[G(\tilde{w})](s+h)-[G(\tilde{w})](s)\right\|_{H^1(\Omega)}ds\notag\\
		\leq& hM_0\beta_pT_0\left\|\tilde{u}\right\|_{C^1\left([0, T_0); L^2(\Omega)\right)}+M_0L_G \int_0^t\left\|\begin{pmatrix}\tilde{v}(s+h)-\tilde{v}(s)\\  \tilde{w}(s+h)-\tilde{w}(s)\end{pmatrix}\right\|_{L^2(\Omega)\times H^1(\Omega)}ds\label{2diff-mild-solution-1}.
	\end{align}
	Combining \eqref{2diff-mild-solution}, \eqref{2Lip-est-1}, \eqref{2ndLip-est-2} and \eqref{2diff-mild-solution-1} gives
	\begin{align}
		\left\|\begin{pmatrix}\tilde{v}(t+h)-\tilde{v}(t)\\ \tilde{w}(t+h)-\tilde{w}(t)\end{pmatrix}\right\|_{L^2(\Omega)\times H^1(\Omega)}&\leq hM_0\left\|\left(\tilde{v}_0, \tilde{w}_0\right)\right\|_{D(\mathcal{A})}+hM_0\left( \frac{\kappa\left(L_G+1\right) }{2C}+\left\|G_0\right\|_{H^{1}(\Omega)}\right)\notag\\
		&+hM_0\beta_pT_0\left\|\tilde{u}\right\|_{C^1\left([0, T_0); L^2(\Omega)\right)}\notag\\
		&+M_0L_G \int_0^t\left\|\begin{pmatrix}\tilde{v}(s+h)-\tilde{v}(s)\\  \tilde{w}(s+h)-\tilde{w}(s)\end{pmatrix}\right\|_{L^2(\Omega)\times H^1(\Omega)}ds\notag.
	\end{align}
	Set $V_o=\left\|\left(\tilde{v}_0, \tilde{w}_0\right)\right\|_{D(\mathcal{A})}+\left( \frac{\kappa \left(L_G+1\right)}{2C}+\left\|G_0\right\|_{H^{1}(\Omega)}\right)+\beta_pT_0\left\|\tilde{u}\right\|_{C^1\left([0, T_0); L^2(\Omega)\right)}$.
	
	Gronwall's inequality then implies that
	\[\left\|\begin{pmatrix}\tilde{v}(t+h)-\tilde{v}(t)\\ \tilde{w}(t+h)-\tilde{w}(t)\end{pmatrix}\right\|_{L^2(\Omega)\times H^1(\Omega)}\leq M_0V_o\left(e^{M_0L_GT_0}\right)h.\]
	Therefore, \eqref{2Lip-mild-solu-inquality} holds for all $h\in(0, T]$ by setting $L_V=M_0V_o\left(e^{M_0L_GT_0}\right).$
\end{proof}

\subsection{Proof of Corollary \ref{2Holdercontinuity}}

\begin{proof}
	Observe that 
	\begin{align}
		\sup_{t\in[0, T]} \int_0^t\left\|\tilde{u}(s+h)-\tilde{u}(s)\right\|_{L^2(\Omega)}ds\leq& T_0\sup_{0\leq s<s+h\leq T}\left\|\tilde{u}(s+h)-\tilde{u}(s)\right\|_{L^2(\Omega)}\notag\\
		\leq&T_0\sup_{0\leq s<s+h\leq T}\left\|\tilde{u}(s+h)-\tilde{u}(s)\right\|_{H^2(\Omega)}\notag\\
		\leq&T_0h^\alpha\left[\tilde{u}\right]_{C^\alpha\left([0, T_0); H^2(\Omega)\right)}.\label{2ndLip-est-3}
	\end{align}
	According to \eqref{2diff-mild-solution}, \eqref{2Lip-est-1}, \eqref{2ndLip-est-2}, \eqref{2diff-mild-solution-1} and \eqref{2ndLip-est-3}, for $0\leq t<t+h\leq T$, with $h\in(0, T]$, it follows that
	\begin{align}
		\left\|\begin{pmatrix}\tilde{v}(t+h)-\tilde{v}(t)\\ \tilde{w}(t+h)-\tilde{w}(t)\end{pmatrix}\right\|_{L^2(\Omega)\times H^1(\Omega)}&\leq hM_0\left\|\left(\tilde{v}_0, \tilde{w}_0\right)\right\|_{D(\mathcal{A})}+h^\alpha M_0\beta_pT_0\left[\tilde{u}\right]_{C^\alpha\left([0, T_0); H^2(\Omega)\right)}\notag\\
		&+hM_0\left( \frac{\kappa\left(L_G+1\right)}{2C}+\left\|G(\tilde{w}_0)+\beta_p\tilde{u}_0\right\|_{H^{1}(\Omega)}\right)\notag\\
		&+M_0L_G \int_0^t\left\|\begin{pmatrix}\tilde{v}(s+h)-\tilde{v}(s)\\  \tilde{w}(s+h)-\tilde{w}(s)\end{pmatrix}\right\|_{L^2(\Omega)\times H^1(\Omega)}ds\notag.
	\end{align}
	Set $P_0= \frac{\kappa\left(L_G+1\right)}{2C}+\left\|G(\tilde{w}_0)+\beta_p\tilde{u}_0\right\|_{H^{1}(\Omega)}$. Gronwall's inequality then implies that
	\begin{align}
		\left\|\begin{pmatrix}\tilde{v}(t+h)-\tilde{v}(t)\\ \tilde{w}(t+h)-\tilde{w}(t)\end{pmatrix}\right\|_{L^2(\Omega)\times H^1(\Omega)}
		\leq& \left(e^{M_0L_GT_0}\right)M_0\left\{\left\|\left(\tilde{v}_0, \tilde{w}_0\right)\right\|_{D(\mathcal{A})}+P_0\right\}h\notag\\
		+&\left(e^{M_0L_GT_0}\right)M_0\beta_pT_0\left[\tilde{u}\right]_{C^\alpha\left([0, T_0); H_o^2(\Omega)\right)}h^\alpha \notag.
	\end{align}
	Notice that
	\[\left(e^{M_0L_GT_0}\right)M_0\left\{\left\|\left(\tilde{v}_0, \tilde{w}_0\right)\right\|_{D(\mathcal{A})}+P_0\right\}h\leq \left(e^{M_0L_GT_0}\right)M_0\left\{\left\|\left(\tilde{v}_0, \tilde{w}_0\right)\right\|_{D(\mathcal{A})}+P_0\right\}T_0^{1-\alpha}h^\alpha\notag,\]
	\begin{align}
		\left(e^{M_0L_GT_0}\right)M_0\beta_pT_0\left[\tilde{u}\right]_{C^\alpha\left([0, T_0); H_o^2(\Omega)\right)}\leq& \left(e^{M_0L_GT_0}\right)M_0\beta_pT_0\left\|\tilde{u}\right\|_{C^\alpha\left([0, T_0); H_o^2(\Omega)\right)}\notag\\
		\leq& \left(e^{M_0L_GT_0}\right)M_0\beta_pT_0\left(r+\left\|\tilde{u}_0\right\|_{H^2(\Omega)}\right)\notag\\
		\leq&\left(e^{M_0L_GT_0}\right)M_0\beta_pT_0\left( \frac{\kappa}{2C}+\left\|\tilde{u}_0\right\|_{H^2(\Omega)}\right)\notag.
	\end{align}
	Setting $L_U=\left(e^{M_0L_GT_0}\right)M_0\left\{\left\|\left(\tilde{v}_0, \tilde{w}_0\right)\right\|_{D(\mathcal{A})}+P_0\right\}T_0^{1-\alpha}+\left(e^{M_0L_GT_0}\right)M_0\beta_pT_0\left( \frac{\kappa}{2C}+\left\|\tilde{u}_0\right\|_{H^2(\Omega)}\right)$,
	and $L_U$ is a Lipschitz constant depending on
	\[\alpha, \ T_0,\ \kappa,\ \Omega,\ \beta_p,\ \beta_F,\ M_0= \sup_{t\in[0, \infty)}\left\|T(t)\right\|_{\mathcal{B}(\mathscr{X})},\ \left\|\left(\tilde{v}_0, \tilde{w}_0\right)\right\|_{D(\mathcal{A})},\  \left\|\tilde{u}_0\right\|_{H^2(\Omega)}, \ \left\|\tilde{w}_0\right\|_{H^{1}(\Omega)}.\]
	Therefore, \eqref{2Holdercontinuityformular} holds for all $h\in(0, T]$ and this concludes the proof of Corollary \ref{2Holdercontinuity}.
\end{proof}

\subsection{Proof of Theorem \ref{2nd-mild-solution-cor}}
\begin{proof}
	Let $T\in(0, T_0)$, $G_0=G\left(\tilde{w}_0\right)+\beta_p\tilde{u}_0$ and $\left(\tilde{v}, \tilde{w}\right)$ be the mild solution of the semilinear evolution equation \eqref{2nd-SWE-1} defined via \eqref{2ndmild-solu-form}. Take $\tilde{u}\in C\left([0, T]; B_{H^2}\left(\tilde{u}_0,  r\right)\right)\cap C^1([0, T]; L^2(\Omega))$ to be given such that  $\tilde{u}'(t)\in L^2(\Omega)$ is uniformly continuous for all $t\in [0, T]$.	
	
	We first prove the linear non-autonomous problem
	\be\label{2ndLNP}
	\begin{pmatrix}\tilde{p}(t)\\ \tilde{q}(t)\end{pmatrix}=T(t)\left(\begin{pmatrix}G_0\\ 0\end{pmatrix}+\mathcal{A}\begin{pmatrix}\tilde{v}_0\\ \tilde{w}_0\end{pmatrix}\right)+ \int_0^tT(t-s)\begin{pmatrix}[\mathcal{H}(\tilde{q})](s)+\beta_p\tilde{u}'(s)\\ 0\end{pmatrix}ds,
	\ee
	can be solved for $t\in[0, T]$. Here
	\be\label{2H-nonlinearity}
	[\mathcal{H}(\tilde{q})](s)= \frac{2\beta_F\tilde{q}(s)}{\left[\tilde{w}(s)+\theta_2\right]^{3}}=[G'(\tilde{w})q](s),\quad s\in [0, t].
	\ee
	We define a nonlinear operator $\Psi$ by
	\[\Psi:\ C\left([0, T]; L^2(\Omega)\times H_0^1(\Omega)\right)\longrightarrow C\left([0, T]; L^2(\Omega)\times H_0^1(\Omega)\right)\]
	\[\left[\Psi\left(\tilde{p}, \tilde{q}\right)\right](t)=T(t)\left(\begin{pmatrix}G_0\\ 0\end{pmatrix}+\mathcal{A}\begin{pmatrix}\tilde{v}_0\\ \tilde{w}_0\end{pmatrix}\right)+ \int_0^tT(t-s)\begin{pmatrix}[\mathcal{H}(\tilde{q})](s)+\beta_p\tilde{u}'(s)\\ 0\end{pmatrix}ds.\]
	For any
	$\left(\tilde{p}_1, \tilde{q}_1\right)$, $\left(\tilde{p}_2, \tilde{q}_2\right)\in C\left([0, T]; L^2(\Omega)\times H_0^1(\Omega)\right)$ and $\tilde{w}\in C\left([0, T]; H_0^1(\Omega)\right)$. It follows that
	\[\mathcal{H}(\tilde{q}_1)-\mathcal{H}(\tilde{q}_2)= \frac{2\beta_F}{\left[\tilde{w}+\theta_2\right]^{3}}\left[\tilde{q}_1-\tilde{q}_2\right]=G'(\tilde{w})(q_1-q_2)\in C\left([0, T]; H_0^1(\Omega)\right).\]
	Hence, according to the estimate \eqref{2ndLip-G-2} of Fr\'{e}chet derivative $G'\left(\tilde{w}\right)q$ from  Lemma \ref{2ndLip-G-Lem}, $\Psi$ is a contraction map on $C\left([0, T]; L^2(\Omega)\times H_0^1(\Omega)\right)$ for all $ T\in(0, T_0)$ since
	\begin{align}
		&\left\|\left[\Psi\left(\tilde{p}_1, \tilde{q}_1\right)\right](t)-\left[\Psi\left(\tilde{p}_2, \tilde{q}_2\right)\right](t)\right\|_{L^2(\Omega)\times H^1(\Omega)}\notag\\
		=&\left\| \int_0^tT(t-s)\begin{pmatrix}[\mathcal{H}(\tilde{q}_1)](s)-\mathcal{H}(\tilde{q}_2)](s)\\ 0\end{pmatrix}ds
		\right\|_{L^2(\Omega)\times H^1(\Omega)}\notag\\
		\leq& T\sup_{0\leq s\leq t\leq T}\|T(t-s)\|_{\mathcal{B}(\mathscr{X})}\sup_{t\in[0, T]}\left\|[\mathcal{H}(\tilde{q}_1)](t)-[\mathcal{H}(\tilde{q}_2)](t)\right\|_{L^2(\Omega)}\notag\\
		\leq&TM_0L_G \sup_{t\in[0, T]}\left\|\tilde{q}_1(t)-\tilde{q}_2(t)\right\|_{H^1(\Omega)}
		\leq \frac{1}{2}\sup_{t\in[0, T]}\left\|\begin{pmatrix}\tilde{p}_1(t)-\tilde{p}_2(t)\\ \tilde{q}_1(t)-\tilde{q}_2(t)\end{pmatrix}\right\|_{L^2(\Omega)\times H^1(\Omega)}\notag.
	\end{align}
	According to the Banach fixed point theorem, for given $\tilde{u}\in C\left([0, T]; B_{H^2}\left(\tilde{u}_0, r\right)\right)\cap C^1\left([0, T]; L^2(\Omega)\right),$ there exists a unique fixed point $(\tilde{p}, \tilde{q})\in C\left([0, T]; L^2(\Omega)\times H_0^1(\Omega)\right)$, such that $(\tilde{p}, \tilde{q})=\Psi(\tilde{p}, \tilde{q}).$ Hence the $\mathds{R}$-linear non-autonomous problem \eqref{2ndLNP} can be solved for $t\in[0, T]$.
	
	We next prove that $(\tilde{p}, \tilde{q})$ is the time derivative of the mild solution $(\tilde{v}, \tilde{w})$.

Let $0\leq t<t+h\leq T$ for some $h\in(0, T]$, equations \eqref{2ndmild-solu-form} and \eqref{2ndLNP} imply that
\begin{align}
	E(h,t)&:= \frac{1}{h}\begin{pmatrix}\tilde{v}(t+h)-\tilde{v}(t)\\ \tilde{w}(t+h)-\tilde{w}(t)\end{pmatrix}-\begin{pmatrix}\tilde{p}(t)\\ \tilde{q}(t)\end{pmatrix}\notag\\
	&=T(t) \frac{1}{h}\left(T(h)-I\right)\begin{pmatrix}\tilde{v}_0\\ \tilde{w}_0\end{pmatrix}-T(t)\mathcal{A}\begin{pmatrix}\tilde{v}_0\\ \tilde{w}_0\end{pmatrix}\notag\\
	&+ \frac{1}{h}\int_0^h\left\{T(t+h-s)\begin{pmatrix}[G(\tilde{w})](s)+\beta_p\tilde{u}(s)\\ 0\end{pmatrix}\right\}ds-T(t)\begin{pmatrix}G_0\\ 0\end{pmatrix}\notag\\
	&+ \int_0^tT(t-s)\begin{pmatrix}\frac{1}{h}\left\{[G(\tilde{w})](s+h)-[G(\tilde{w})](s)\right\}-[\mathcal{H}(\tilde{q})](s)\\ 0\end{pmatrix}ds\notag\\
	&+\int_0^tT(t-s)\begin{pmatrix}\beta_p\left[\frac{1}{h}\left\{\tilde{u}(s+h)-\tilde{u}(s)\right\}-\tilde{u}'(s)\right]\\ 0\end{pmatrix}ds\notag.
\end{align}
Let
\[E^{(1)}(h,t):=T(t) \frac{1}{h}\left(T(h)-I\right)\begin{pmatrix}\tilde{v}_0\\ \tilde{w}_0\end{pmatrix}-T(t)\mathcal{A}\begin{pmatrix}\tilde{v}_0\\ \tilde{w}_0\end{pmatrix},\]
\[E^{(2)}(h,t):= \frac{1}{h}\int_0^h\left\{T(t+h-s)\begin{pmatrix}[G(\tilde{w})](s)+\beta_p\tilde{u}(s)\\ 0\end{pmatrix}\right\}ds-T(t)\begin{pmatrix}G_0\\ 0\end{pmatrix},\]
\begin{align}
	E^{(3)}(h,t)&:= \int_0^tT(t-s)\begin{pmatrix} \frac{1}{h}\left\{[G(\tilde{w})](s+h)-[G(\tilde{w})](s)\right\}-[\mathcal{H}(\tilde{q})](s)\\ 0\end{pmatrix}ds\notag\\
	&+\int_0^tT(t-s)\begin{pmatrix}\beta_p\left[ \frac{1}{h}\left\{\tilde{u}(s+h)-\tilde{u}(s)\right\}-\tilde{u}'(s)\right]\\ 0\end{pmatrix}ds\notag.
\end{align}
We initially observe that
\begin{align}
	\lim_{h\rightarrow 0}\left\|E^{(1)}(h,t)\right\|_{L^2(\Omega)\times H^1(\Omega)}\hspace*{-0.1cm}\leq\hspace*{-0.1cm}\lim_{h\rightarrow 0} M_0\left\| \frac{1}{h}\left(T(h)-I\right)\begin{pmatrix}\tilde{v}_0\\ \tilde{w}_0\end{pmatrix}-\mathcal{A}\begin{pmatrix}\tilde{v}_0\\ \tilde{w}_0\end{pmatrix}\right\|_{L^2(\Omega)\times H^1(\Omega)}\hspace*{-0.2cm}
	:=\lim_{h\rightarrow 0}\Lambda_1(h)=0,\notag
\end{align}
\[\lim_{h\rightarrow 0} \frac{1}{h} \int_0^h\left\{T(h-s)\begin{pmatrix}G_0 \\ 0\end{pmatrix}\right\}ds=\begin{pmatrix}G_0 \\ 0\end{pmatrix}.\]
Because $ G(\tilde{w})\in C([0, T]; H^1(\Omega))$ and $\tilde{u}\in C\left([0, T]; H^2(\Omega)\cap H_0^1(\Omega)\right)$, 
\[\lim_{h\rightarrow 0}\sup_{s\in[0, h]}\left\|[G(\tilde{w})](s)-[G(\tilde{w})](0)+\beta_p[\tilde{u}(s)-\tilde{u}(0)]\right\|_{H^1(\Omega)}=0,\] hence
\begin{align}
	&\lim_{h\rightarrow 0}\left\|E^{(2)}(h,t)\right\|_{L^2(\Omega)\times H^1(\Omega)}\notag\\
	=&\lim_{h\rightarrow 0}\left\| \frac{1}{h} \int_0^h\left\{T(t+h-s)\begin{pmatrix}[G(\tilde{w})](s)+\beta_p\tilde{u}(s)\\ 0\end{pmatrix}\right\}ds-T(t)\begin{pmatrix}G_0\\ 0\end{pmatrix}\right\|_{L^2(\Omega)\times H^1(\Omega)}\notag\\
	=&\lim_{h\rightarrow 0}\left\|T(t) \frac{1}{h} \int_0^h\left\{T(h-s)\begin{pmatrix}[G(\tilde{w})](s)-G(\tilde{w}_0)+\beta_p(\tilde{u}(s)-\tilde{u}_0) \\ 0\end{pmatrix}\right\}ds\right\|_{L^2(\Omega)\times H^1(\Omega)}\notag\\
	\leq&\lim_{h\rightarrow 0}M_0\left\| \frac{1}{h} \int_0^h\left\{T(h-s)\begin{pmatrix}[G(\tilde{w})](s)-G(\tilde{w}_0)+\beta_p(\tilde{u}(s)-\tilde{u}_0)\\ 0\end{pmatrix}\right\}ds\right\|_{L^2(\Omega)\times H^1(\Omega)}\notag\\
	\leq&\lim_{h\rightarrow 0}M_0^2 \frac{h}{h}\sup_{s\in[0, h]}\left\|[G(\tilde{w})](s)-G(\tilde{w}_0)+\beta_p(\tilde{u}(s)-\tilde{u}_0)\right\|_{L^2(\Omega)}\notag\\
	=&\lim_{h\rightarrow 0}M_0^2\sup_{s\in[0, h]}\left\|[G(\tilde{w})](s)-[G(\tilde{w})](0)+\beta_p(\tilde{u}(s)-\tilde{u}(0))\right\|_{H^1(\Omega)}
	:=\lim_{h\rightarrow 0}\Lambda_2(h)=0.\notag
\end{align}
Define
$G_D(t,h):=\left[G(\tilde{w})\right](t+h)-\left[G(\tilde{w})\right](t)-\left[G'(\tilde{w})\right](t)\cdot\left[\tilde{w}(t+h)-\tilde{w}(t)\right]$,
\[E^{(3)}_1(h,t):= \int_0^tT(t-s)\begin{pmatrix}\frac{1}{h}G_D(s,h)\\ 0\end{pmatrix}ds,\]
\[E^{(3)}_2(h,t):= \int_0^tT(t-s)\begin{pmatrix}[G'(\tilde{w})](s)\left\{\frac{1}{h}\{\tilde{w}(s+h)-\tilde{w}(s)\}-\tilde{q}(s)\right\}\\0\end{pmatrix}ds,\]
\[E^{(3)}_3(h,t):= \int_0^tT(t-s)\begin{pmatrix}\beta_p\left\{\frac{1}{h}[\tilde{u}(s+h)-\tilde{u}(s)]-\tilde{u}'(s)\right\}\\ 0\end{pmatrix}ds.\]
We then write
\[E^{(3)}(h,t)=E^{(3)}_1(h,t)+E^{(3)}_2(h,t)+E^{(3)}_3(h,t).\]
As $\tilde{u}\in C\left([0, T]; B_{H^2}\left(\tilde{u}_0,  r\right)\right)\cap C^1\left([0, T]; L^2(\Omega)\right)$ is given such that the time derivative $\tilde{u}'(t)\in L^2(\Omega)$ is uniformly continuous for all $t\in [0, T]$, we obtain
\begin{align}
	\left\|E^{(3)}_3(h,t)\right\|_{L^2(\Omega)\times H^1(\Omega)}=&\left\| \int_0^tT(t-s)\begin{pmatrix}\beta_p\left\{\frac{1}{h}[\tilde{u}(s+h)-\tilde{u}(s)]-\tilde{u}'(s)\right\}\\ 0\end{pmatrix}ds\right\|_{L^2(\Omega)\times H^1(\Omega)}\notag\\
	\leq& T_0M_0\beta_p\sup_{\begin{smallmatrix}0\leq t<t+h\leq T\end{smallmatrix}}\left\|\frac{\tilde{u}(t+h)-\tilde{u}(t)}{h}-\tilde{u}'(t)\right\|_{L^2(\Omega)}\notag\\
	=&T_0M_0\beta_p\sup_{\begin{smallmatrix}0\leq t\leq t+\sigma h\leq T\\ 0\leq\sigma\leq 1\end{smallmatrix}}\left\|\frac{1}{h}\int_0^1\frac{d}{d\sigma}\left[\tilde{u}'(t+\sigma h)\right]d\sigma-\tilde{u}'(t)\right\|_{L^2(\Omega)}\notag\\
	=&T_0M_0\beta_p\sup_{\begin{smallmatrix}0\leq t\leq t+\sigma h\leq T\\ 0\leq\sigma\leq 1\end{smallmatrix}}\left\|\tilde{u}'(t+\sigma h)-\tilde{u}'(t)\right\|_{L^2(\Omega)}
	:=\Lambda_3(h)\rightarrow 0,\ \text{as}\ h\rightarrow 0. \notag
\end{align}
The bound  \eqref{2ndLip-G-2} of Fr\'{e}chet derivative $G'\left(\tilde{w}\right)$ from   Lemma \ref{2ndLip-G-Lem} implies
\begin{align}
	\left\|E^{(3)}_2(h,t)\right\|_{L^2(\Omega)\times H^1(\Omega)}\leq M_0L_G \int_0^t\left\|E(h,s)\right\|_{L^2(\Omega)\times H^1(\Omega)}ds\notag.
\end{align}
Notice that $G_D(t,h)\in H^1(\Omega)$. By the estimate \eqref{2Lip-mild-solu-inquality} of Corollary \ref{2Lip-mild-solu} and the inequality \eqref{2nduniformly-continuous-Fre-G} of   Lemma \ref{2ndLip-G-Lem}, the function $\tilde{w}$ is Lipschitz continuous with respect to $t\in [0, T]$ and $G'(\tilde{w})$ is uniformly continuous with respect to $t\in[0, T]$. Employing these facts gives
\begin{align}
	&\left\|E^{(3)}_1(h,t)\right\|_{L^2(\Omega)\times H^1(\Omega)}\notag\\
	=&T_0M_0\sup_{\begin{smallmatrix}0\leq t\leq t+\tau h\leq T\\ 0\leq\tau\leq1\end{smallmatrix}}\frac{1}{h}\left\| \int_0^1\left[G'(\tilde{w})\right](t+\tau h)-\left[G'(\tilde{w})\right](t)d\tau\cdot\left[\tilde{w}(t+h)-\tilde{w}(t)\right]\right\|_{ H^1(\Omega)}\notag\\
	\leq& T_0M_0\frac{L_V h}{h}\sup_{\begin{smallmatrix}0\leq t\leq t+\tau h\leq T\\ 0\leq\tau\leq1\end{smallmatrix}}\left\| \int_0^1\left[G'(\tilde{w})\right](t+\tau h)-\left[G'(\tilde{w})\right](t)d\tau\right\|_{\mathcal{B}\left(H_0^1(\Omega)\right)}\notag\\
	=&T_0M_0L_V\sup_{\begin{smallmatrix}0\leq t\leq t+\tau h\leq T\\ 0\leq\tau\leq1\end{smallmatrix}}\left\|\left[G'(\tilde{w})\right](t+\tau h)-\left[G'(\tilde{w})\right](t)\right\|_{\mathcal{B}\left(H_0^1(\Omega)\right)}
	:=\Lambda_4(h)\rightarrow 0 ,\ \text{as}\ h\rightarrow 0. \notag
\end{align}
Summing up, we have shown
\[\left\|E(h,t)\right\|_{L^2(\Omega)\times H^1(\Omega)}\leq\Lambda_1(h)+\Lambda_2(h)+\Lambda_3(h)+\Lambda_4(h)+M_0L_G \int_0^t\left\|E(h,s)\right\|_{L^2(\Omega)\times H^1(\Omega)}ds.\]
Gronwall's inequality thus implies the inequality
\[
\left\|E(h,t)\right\|_{L^2(\Omega)\times H^2(\Omega)}\leq\left(\Lambda_1(h)+\Lambda_2(h)+\Lambda_3(h)+\Lambda_4(h)\right)e^{tM_0L_G}
\]
 for $t\in [0, T]$. Letting $h\rightarrow 0^+$, we then deduce that the $\left(\tilde{v}, \tilde{w}\right)$ is differentiable from the right and the right  derivative of $\left(\tilde{v}, \tilde{w}\right)$ coincides with $\left(\tilde{p}, \tilde{q}\right)$. Because $\left(\tilde{p}, \tilde{q}\right)$ is continuous on $[0, T]$, by using Lemma \ref{time-derivative-continuity}, we conclude $\left(\tilde{v}, \tilde{w}\right)\in C^1\left([0, T]; L^2(\Omega)\times H_0^1(\Omega)\right)$. Since the function $\tilde{u}$ is given in $C\left([0, T]; B_{H^2}\left(\tilde{u}_0,  r\right)\right)\cap C^1([0, T]; L^2(\Omega))$, then $\left(G\left(\tilde{w}\right)+\beta_p\tilde{u}, 0\right)\in C^1\left([0, T]; L^2(\Omega)\times H_0^1(\Omega)\right)$. By Lemma \ref{IEE-S}, the mild solution $\left(\tilde{v}, \tilde{w}\right)$, defined by \eqref{2ndmild-solu-form}, uniquely solves the semilinear evolution equation \eqref{2nd-SWE-1} on $[0, T]$; $\left(\tilde{v}, \tilde{w}\right)$ is a unique strict solution of  \eqref{2nd-SWE-1} with 
 \[
 \left(\tilde{v}, \tilde{w}\right)\in C\left([0, T]; H_0^1(\Omega)\times \left\{H^2(\Omega)\cap H_0^1(\Omega)\right\}\right)\cap C^1\left([0, T]; L^2(\Omega)\times H_0^1(\Omega)\right)
 \] 
 for all $T\in(0, T_0)$.
\end{proof}
\bibliographystyle{abbrv}
\bibliography{Bibliography4}

\end{document}